\def\R{\mathbb{R}}
\newcommand{\dd }{{\mathrm{d}}}
\newcommand\doubleRule{\toprule\toprule}
\makeatletter\@addtoreset{equation}{section}
\newtheorem{theorem}{Theorem}[section]
\newtheorem{assumption}[theorem]{Assumption}
\newtheorem{definition}[theorem]{Definition}
\newtheorem{lemma}[theorem]{Lemma}
\newtheorem{proposition}[theorem]{Proposition}
\newtheorem{remark}[theorem]{Remark}
\begin{document}
	\title{\textbf{A cell-cell repulsion model on a hyperbolic Keller-Segel equation}}
	\author{\textsc{Xiaoming Fu\thanks{The research of this author is supported by China Scholarship Council.} , Quentin Griette and Pierre Magal}
		\\
		{\small \textit{Univ. Bordeaux, IMB, UMR 5251, F-33400 Talence, France}} \\
		{\small \textit{CNRS, IMB, UMR 5251, F-33400 Talence, France.}}\\
		{\small \textit{}} }
	\maketitle
	
	\begin{abstract}
		In this work, we discuss a cell-cell repulsion population dynamic model based on a hyperbolic Keller-Segel equation with two populations. This model can well describe the cell growth and dispersion in the cell co-culture experiment in the work of Pasquier et al. \cite{Pasquier2011}.  With the notion of solutions integrated along the characteristics, we prove  the existence and uniqueness of the solution and the segregation property of the two species. From a numerical perspective, we can also observe that the model admits a competitive exclusion (the results are different from the corresponding ODE model). More importantly, our model shows the complexity of the short term (6 days) co-cultured cell distribution depending on the initial distribution of each species.  Through numerical simulations, the impact of the initial distribution on the population ratio lies in the initial total cell number and our study shows that the population ratio is not impacted by the law of initial distribution. We also find that a fast dispersion rate gives a short-term advantage while the vital dynamic contributes to a long-term population advantage.  
	\end{abstract}

	\section{Introduction}
	
	In many recent biological experiments, the co-culture of multiple types of cells has been used for a better understanding of cell-cell interactions. This is a typical case in the context of studying cancer cells where the interaction between cancer cells and normal cells plays a crucial role in tumor development as well as in the resistance of cells to chemotherapeutic drugs. The goal of this work is to introduce a  mathematical model taking care of the cell growth together with the spatial segregation property between two types of cells. Such a phenomenon was observed by Pasquier et al. \cite{Pasquier2012}. They studied the protein transfer between two types of human breast cancer cell. Over a 7-day cell co-culture, the spatial competition was observed between these two types of cells and a clear boundary was formed between them on day 7 (see Figure \ref{Figure1}). Segregation property in cell co-culture was also studied recently by Taylor et al. \cite{Taylor2017}. They compared the experimental results with an individual-based model. They found the heterotypic repulsion and homotypic cohesion can account for the cell segregation and the border formation. A similar segregation property is also found in the mosaic pattern between nections and cadherins in the experiments of Katsunuma et al. \cite{Katsunuma2016}. 
	\begin{figure}[H]
		\begin{center}
			\includegraphics[width=0.6\textwidth]{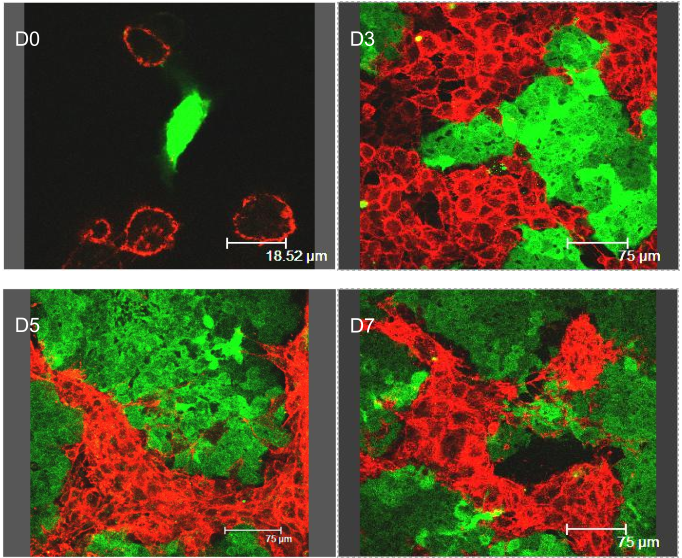}
		\end{center}
		\caption{\textit{Direct immunodetection of P-gp transfers in co-cultures of sensitive (MCF-7) and resistant (MCF-7/Doxo) variants of the human breast cancer cell line.}}\label{Figure1}
	\end{figure}

	The early attempts to explain the segregation property by continuum equations date back to 1970s. Shigesada, Kawasaki and Teramoto \cite{Shigesada1979} studied segregation with a nonlinear diffusion model and they found the spatial segregation acts to stabilize the coexistence of two similar species, relaxing the competition among different species. Lou and Ni \cite{Lou1996} generalized the model and studied the steady state problem for the self/cross-diffusion model. For the nonlinear diffusion model, Bertsch et al. \cite{Bertsch12} in their work proved the existence of segregated solutions when the reaction term is of Lotka-Volterra type.     
	
	Here instead of using nonlinear diffusion models, we will focus on a (hyperbolic) Keller-Segel model. Such models have been used to describe the attraction and the repulsion of cell populations known as chemotaxis models. Theoretical and mathematical modeling of chemotaxis can date to the pioneering works of Patlak \cite{Patlak1953} in the 1950s and Keller and Segel \cite{Keller1971} in the 1970s. It has become an important model in the description of tumor growth or embryonic development. We refer to the review papers of Horstmann \cite{Horstmann2003} and Hillen and Painter \cite{Hillen2009} for a detailed introduction about the Keller--Segel model. To our best knowledge a model taking care of segregation property and cell-cell repulsion of Keller-Segel type has not been studied.  
	
	As we will explain in the paper, our model can also be regarded as a nonlocal advection model. Recently, implementing nonlocal advection models for the study of cell-cell adhesion and repulsion has attracted a lot of attention. As pointed out by many biologists, cell-cell interactions do not only exist in a local scope, but a long-range interaction should be taken into account to guide the mathematical modeling.  Armstrong, Painter and Sherratt \cite{Armstrong2006} in their early work purposed a model (APS model) under the principle of the local diffusion plus the nonlocal attraction driven by the adhesion forces to describe the phenomenon of cell mixing, full/partial engulfment and complete sorting in the cell sorting problem. Based on the APS model, Murakawa and Togashi \cite{Murakawa2015} thought that the population pressure should come from the cell volume size instead of the linear diffusion. Therefore, they changed the linear diffusion term into a nonlinear diffusion in order to capture the sharp fronts and the segregation in cell co-culture. Carrillo et al. \cite{Carrillo2019} recently purposed a new assumption on the adhesion velocity field and their model showed a good agreement in the experiments in the work of Katsunuma et al.  \cite{Katsunuma2016}. The idea of the long-range attraction and short-range repulsion can also be seen in the work of Leverentz, Topaz and Bernoff \cite{Leverentz2009}. They considered a nonlocal advection model to study the asymptotic behavior of the solution. By choosing a Morse-type kernel which follows the attractive-repulsive interactions, they found the solution can asymptotically spread, contract (blow-up), or reach a steady-state.  Burger, Fetecau and Huang \cite{Burger2014} considered a similar nonlocal adhesion model with nonlinear diffusion, they studied the well-posedness of the model and proved the existence of a compact supported, non-constant steady state. Dyson et al. \cite{Dyson2010}  established the local existence of a classical solution for a nonlocal cell-cell adhesion model in spaces of uniformly continuous functions. For Turing and Turing-Hopf bifurcation due to the nonlocal effect, we refer to Ducrot et al. \cite{Ducrot2018} and Song et al. \cite{Song2019}. We also refer the readers to Mogliner et al. \cite{Mogilner2003}, Eftimie et al. \cite{Eftimie2007}, Ducrot and Magal \cite{Ducrot2014}, Fu and Magal \cite{Fu2018} for more topics about nonlocal advection equations. For the derivation of such models, readers can refer to the work of Bellomo et al. \cite{Bellomo2012} and Morale, Capasso and Oelschl\"ager \cite{Morale2005}.

	In this article, we consider a two-dimensional bounded domain (a flat circular petri dish). We use the notion of solution integrated along the characteristics.  Thanks to the appropriate boundary condition of the pressure equation (see Equation \eqref{eq1.4}), we deduce that the characteristics stay in the domain for any positive time. The positivity of solutions, the segregation property and a conservation law follow from the notion of solution as well. The main goal in this article is to investigate the complexity of the short-term (6 days) co-cultured cell distribution depending on the initial distribution of each species.  Through the numerical simulations, we investigate the impact of the initial population number (as well as the law of initial distributions) on the population ratio. In the above mentioned literature, the numerical simulation are restricted to a rectangular domain with periodic boundary conditions. It is worth mentioning that here the domain is circular with no flux boundary condition for the pressure which requires a finite volume method (see Appendix \ref{App:NumScheme}).  
	
	The plan of the paper is the following. In Section 2, we present the model for the single-species case and we prove the local existence and uniqueness of solutions by considering the solution integrated along the characteristics and we prove a conservation law. Section 3 is devoted to the numerical analysis of the model. In Section 3.1, we consider the model homogeneous in space which corresponds to an ODE model that has been previously studied by Zeeman \cite{Zeeman1995}. In Section 3.2, we investigated the competitive exclusion principle and the impact of the initial distribution on population ratio.  The spatial competition due to the dispersion coefficients and cell kinetics is considered in Section 3.3. Section 4 is devoted to discussion and conclusion and all the mathematical details are presented in the Appendix.

	\section{Mathematical modeling}
	\subsection{Single species model}
	Let us consider the following one-species model 
	\begin{equation}\label{eq1.3}
		\begin{cases}
			\partial_{t}u(t,x) -d\,\mathrm{div }\bigl(u(t,x)\nabla P(t,x)\bigr)=
			u(t,x)h(u(t,x))&\text{ in }  (0,T]\times\Omega\\
			u(0,x)=u_0(x) &\text{ on } \overline{\Omega},
		\end{cases}
	\end{equation}
	where $ P $ satisfies the following elliptic equation
	\begin{equation}\label{eq1.4}
		\begin{cases}
			\big(I-\chi\Delta \big)P(t,x) = u(t,x) &\text{ in }  (0,T]\times\Omega\\
			\nabla P(t,x)\cdot \mathbf{\nu}(x) =0 &\text{ on }  [0,T]\times\partial\Omega,
		\end{cases}
	\end{equation}	
	We denote $ \Omega\subset \R^2 $ to be the unit open disk centered at $ \mathbf{0}=(0,0) $ with radius $ r=1 $, i.e., $\Omega = B_{\R^2}(\mathbf{0},1)$. Here 
	$ \mathbf{\nu} $ is the outward normal vector, $ d $ is the dispersion coefficient, $ \chi $ is the sensing coefficient.	
	The divergence, gradient and Laplacian are taken with respect to $ x $.  System \eqref{eq1.3}-\eqref{eq1.4} can be regarded as a hyperbolic Keller-Segel equation (with chemotactic repulsion) on a bounded domain.  
	
	\begin{remark} Equation \eqref{eq1.4} can be derived from the following parabolic equation (which is the classical case in the Keller-Segel equation \cite{Horstmann2003}) as  $\varepsilon$ goes to $0$:
		\begin{equation}
			\varepsilon \partial_t P(t,x) = \chi \Delta P(t,x) +u(t,x) -P(t,x).
		\end{equation}	
		The process of letting $\varepsilon \to 0$ corresponds to the assumption that the dynamics of the chemorepellent is fast compared to the evolution of the cell density. In the case of chemoattractant a variant of such a model was considered by Perthame and Dalibard \cite{Perthame2009}, Calvez and Dolak-Stru\ss \cite{Calvez2008}.  
	\end{remark}	
	
	\begin{remark} As we mentioned in the introduction, Equation \eqref{eq1.4} can be regarded as a non-local integral equation by using Green's representation 
			\begin{equation*}
				P(t,x)=\int_\Omega \kappa(x,y)u(t,y) dy. 
			\end{equation*}
	\end{remark}
	\subsubsection{The invariance of domain $ \Omega $ and the well-posedness of the model}
	We remark that in System \eqref{eq1.3}-\eqref{eq1.4} we do not impose any boundary condition directly on $ u $. Instead, the boundary condition here is induced by $  \nabla P\cdot \nu =0 $. If we consider the associated characteristics flow of \eqref{eq1.3}-\eqref{eq1.4}
	\begin{equation}\label{eq2.3}
		\begin{cases}
			\frac{\partial\phantom{t}}{\partial t}\Pi(t,s;x)=-d\,\nabla P(t,\Pi(t,s;x))\\
			\Pi(s,s;x)=x\in \Omega.
		\end{cases}
	\end{equation}
	We can prove (see Appendix \ref{App:Invariance}) the characteristics can not leave the domain $ \Omega $ (see Figure \ref{Figure2} for an illustration). In fact, we can prove for any $ t>0 $, the mapping $ x\mapsto \Pi(t,0;x) $ is a bijection from $ \Omega $ to itself (see Lemma \ref{lem2.7}).	We consider the solution along the characteristics
	\[ w(t,x):=u(t,\Pi(t,0;x))\quad x\in \Omega,t>0. \]
	Taking any $ x\in \Omega $, there exists $ y\in \Omega $ such that $ x=\Pi (t,0;y) $, and since  
	\[ w(t,y)=w(t,\Pi(0,t;x))=u(t,x), \]
	we can reconstruct the solution $ u(t,\cdot) $ from $ w(t,\cdot) $ and $ \{\Pi(t,s,\cdot)\}_{t,s\in[0,T]} $ on $ \Omega $. 
	\begin{figure}[H]
		\begin{center}
			\includegraphics[width=0.38\textwidth]{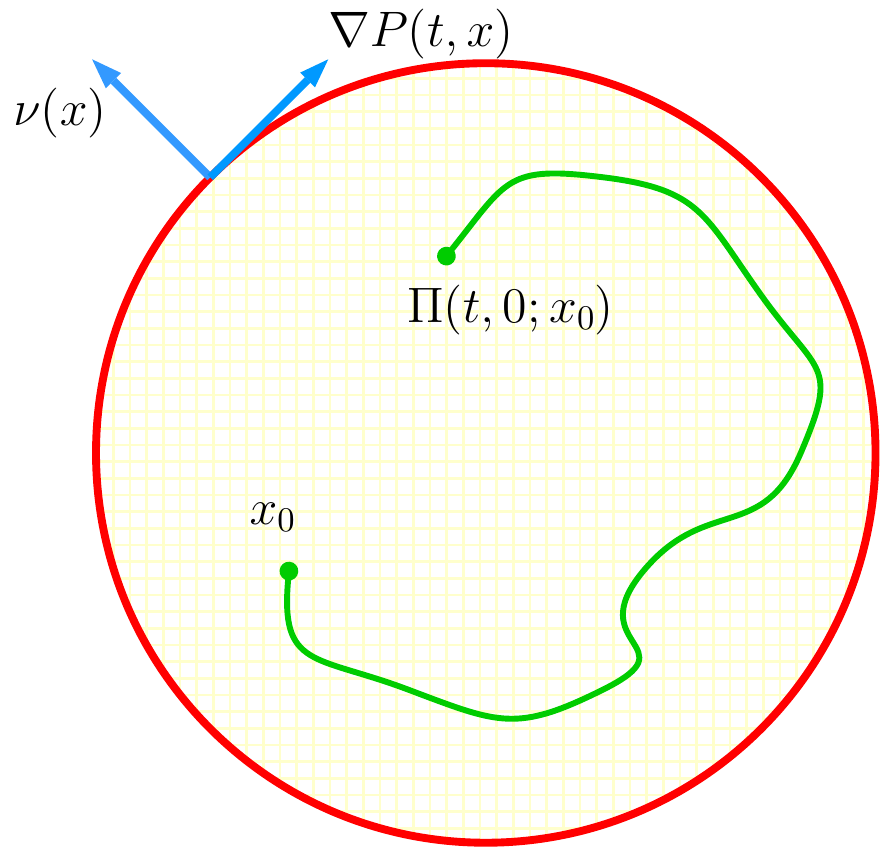}
		\end{center}
		\caption{\textit{An illustration for the invariance of domain $ \Omega $. The green curve represents a trajectory of the characteristics. }}\label{Figure2}
	\end{figure}
	
	\begin{assumption}\label{ass:P}
		Assume the vector field $ (t,x)\mapsto\nabla P(t,x)  $ is continuous in $ [0,T]\times\overline{\Omega} $ and lipschitzian with respect to $ x $ in $ [0,T]\times\overline{\Omega} $.
	\end{assumption}
	\begin{remark}
		Assumption \ref{ass:P} is a sufficient condition for the existence and uniqueness of the characteristic flow $ \lbrace\Pi(t,s;\cdot)\rbrace_{t,s\in[0,T]} $ in \eqref{eq2.3}. 
	\end{remark}
	\begin{definition}
		For any bounded open domain $ \Omega $. If $ u: \Omega \to \R $ is bounded and continuous, we write 
		\[ \|u \|_{C(\overline{\Omega})} :=\sup_{x\in \Omega} |u(x)|. \]
		For any $ \alpha\in (0,1] $, the $ \alpha^{th} $--H\"older norm of $ u:\Omega \to \R $ is 
		\[\|u \|_{C^{0,\alpha}(\overline{\Omega})}:=\|u\|_{C(\overline{\Omega})}+[u]_{C^{0,\alpha}(\overline{\Omega})},  \]
		where
		\[ [u]_{C^{0,\alpha}(\overline{\Omega})} := \sup_{x,y\in \Omega\atop x\neq y} \bigg\lbrace \frac{|u(x)-u(y)|}{|x-y|^{\alpha}} \bigg\rbrace. \]
		The H\"older space $ C^{k,\alpha}(\overline{\Omega}) $ consists of all functions $ u\in C^{k}(\overline{\Omega}) $ for which the norm 
		\[ \| u\|_{C^{k,\alpha}(\overline{\Omega})}:= \sum_{|\alpha|\leq k} \| D^{\alpha}u \|_{C(\overline{\Omega})} + \sum_{|\alpha|= k} [D^\alpha u]_{C^{0,\alpha}(\overline{\Omega})}  \]
		is finite.
	\end{definition}
	\begin{lemma}\cite[Theorem 6.30 and 6.31]{Gilbarg2001}\label{lem:elliptic}
		Let $\Omega\subset\R^2 $ to be a unit open disk. Consider the following elliptic equation
		\begin{equation}\label{5.1}
			\begin{cases}
				(I-\chi\Delta)P(x)=u(x)\quad &x\in\Omega\\
				\nabla P(x)\cdot\mathbf{\nu}(x)=0\quad &x\in\partial \Omega,
			\end{cases}
		\end{equation}
		where $ \mathbf{\nu} $ is the outward unit normal vector on $ \partial \Omega $. Then for all $ u\in C^{0,\alpha}(\overline{\Omega}) $, the elliptic problem \eqref{5.1} has a unique solution $ P \in C^{2,\alpha}(\overline{\Omega}) $. Moreover,
		\[ \|P\|_{C^{2,\alpha}(\overline{\Omega})} \leq C \|u\|_{C^{0,\alpha}(\overline{\Omega})}, \]
		where $ C=C(\alpha,\chi,\Omega) $.
	\end{lemma}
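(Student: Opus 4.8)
The plan is to recognize \eqref{5.1} as a linear oblique (conormal) derivative problem for a uniformly elliptic operator carrying a strictly positive zeroth-order term, and to invoke the Schauder theory after checking its hypotheses. First I would rewrite the equation in the standard form $LP := \chi\Delta P - P = -u$, so that in the usual convention the zeroth-order coefficient is $c=-1<0$, the principal part is the isotropic Laplacian, and the boundary operator $\nabla P\cdot\nu$ is exactly the conormal derivative, with obliqueness constant $\nu\cdot\nu=1>0$. The strict negativity $c<0$ is the structural feature that makes the statement clean: it rules out constants from the kernel and removes the usual solvability (compatibility) condition attached to the pure Neumann problem.

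For uniqueness I would argue by an energy identity. Multiplying $-\chi\Delta P+P=u$ by $P$, integrating over $\Omega$ and integrating by parts, the boundary term $\chi\int_{\partial\Omega}(\nabla P\cdot\nu)P$ vanishes thanks to the boundary condition, leaving
\[
\chi\int_\Omega|\nabla P|^2\,dx+\int_\Omega P^2\,dx=\int_\Omega uP\,dx.
\]
Setting $u\equiv 0$ forces $P\equiv 0$, which yields uniqueness; equivalently the maximum principle for operators with $c<0$ gives the same conclusion together with the bound $\|P\|_{C(\overline\Omega)}\le\|u\|_{C(\overline\Omega)}$ that I will reuse below. The associated bilinear form $a(P,v)=\chi\int_\Omega\nabla P\cdot\nabla v\,dx+\int_\Omega Pv\,dx$ is coercive on $H^1(\Omega)$, so Lax--Milgram (with the Neumann condition entering as the natural boundary condition) produces a weak solution $P\in H^1(\Omega)$; alternatively existence follows from the Fredholm alternative in the Schauder framework, where the trivial kernel established above already guarantees solvability.

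It then remains to upgrade this weak solution to $C^{2,\alpha}(\overline\Omega)$ and to establish the quantitative bound. Since $\partial\Omega$ is the unit circle (hence smooth), the coefficients are constant (hence $C^{0,\alpha}$), and $u\in C^{0,\alpha}(\overline\Omega)$, the interior and boundary Schauder estimates for the regular oblique derivative problem apply and give $P\in C^{2,\alpha}(\overline\Omega)$ together with an a priori estimate of the form
\[
\|P\|_{C^{2,\alpha}(\overline\Omega)}\le C\bigl(\|u\|_{C^{0,\alpha}(\overline\Omega)}+\|P\|_{C(\overline\Omega)}\bigr),
\]
with $C=C(\alpha,\chi,\Omega)$. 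I would then absorb the lower-order term using the maximum-principle bound $\|P\|_{C(\overline\Omega)}\le\|u\|_{C(\overline\Omega)}\le\|u\|_{C^{0,\alpha}(\overline\Omega)}$ obtained in the previous step, which produces precisely the stated estimate $\|P\|_{C^{2,\alpha}(\overline\Omega)}\le C\|u\|_{C^{0,\alpha}(\overline\Omega)}$.

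I expect the main obstacle to be the boundary analysis rather than the interior regularity: establishing the Schauder estimate \emph{up to} $\partial\Omega$ for the oblique derivative problem and then removing the $\|P\|_{C(\overline\Omega)}$ term from the right-hand side. This removal hinges on uniqueness, i.e. on the triviality of the kernel, so the crux is to exploit the strictly positive zeroth-order coefficient. For the genuine Neumann problem ($c=0$) this step fails because of the constant solutions and the compatibility condition on $u$; it is exactly the $+P$ term that circumvents both difficulties here.
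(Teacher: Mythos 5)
The paper offers no proof of this lemma at all: it is quoted verbatim from Gilbarg--Trudinger \cite[Theorems 6.30 and 6.31]{Gilbarg2001}, which are exactly the boundary Schauder estimate and the existence--uniqueness theorem for the regular oblique derivative problem with nonpositive (here strictly negative) zeroth-order coefficient. Your proposal is correct and is in substance a reconstruction of that cited argument: you identify the conormal problem, verify the hypotheses ($c=-1<0$, obliqueness), and absorb $\|P\|_{C(\overline{\Omega})}$ into the right-hand side via uniqueness and the maximum principle, which is precisely how the quantitative bound follows from the a priori estimate; your Lax--Milgram/energy step for existence is only a minor Hilbert-space variant of the method-of-continuity/Fredholm route in the source, and either closes the argument.
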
 
	The following theorem tells us if we choose our initial value $ u_0 $ to be smooth enough, then Assumption \ref{ass:P} can be satisfied and the existence and the uniqueness of solutions follow. 
	
	\begin{theorem}[\textbf{Existence and uniqueness of the solution along the characteristic}]\label{thm:existence}
		Let $ u_0\in W^{1,\infty}(\Omega)\cap C_+^0(\overline{\Omega}) $. Then for some $ T>0$ there exists a unique non-negative  solution $u\in C\left([0,T];C_+^{0}(\overline{\Omega})\right)$ to \eqref{eq1.3}-\eqref{eq1.4} which satisfies $u(t=0, x)=u_0(x)$. Moreover for any $t\in [0,T]$, we have $u(t,\cdot)\in W^{1,\infty}(\Omega)$ and $\sup_{t\in[0,T]}\|u(t,\cdot) \|_{W^{1,\infty}(\Omega)}<\infty $.
	\end{theorem}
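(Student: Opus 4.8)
The plan is to run a Banach fixed-point argument whose engine is the observation that, along the characteristics, the transport equation collapses to a scalar ODE. First I would set $w(t,x):=u(t,\Pi(t,0;x))$ and differentiate in $t$: substituting $\partial_t u = d\,\mathrm{div}(u\nabla P)+uh(u)$ cancels the advection term $d\,\nabla u\cdot\nabla P$ produced by the chain rule, leaving
\[ \frac{\mathrm{d}}{\mathrm{d}t}w(t,x) = w(t,x)\big[d\,\Delta P(t,\Pi(t,0;x)) + h(w(t,x))\big]. \]
Using the elliptic equation \eqref{eq1.4} in the form $\Delta P=(P-u)/\chi$, this is a genuine ODE along each trajectory, whose solution $w(t,x)=u_0(x)\exp\big(\int_0^t[d\,\Delta P+h(w)]\,\mathrm{d}s\big)$ \emph{is} the solution integrated along the characteristics. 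This representation makes positivity immediate, since $u_0\ge 0$ forces $w\ge 0$ and hence $u\ge 0$.

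I would then define a map $\Phi$ on the set
\[ \mathcal{E}_{T,R} := \Big\{ u\in C\big([0,T];C_+^0(\overline{\Omega})\big) : \sup_{t\in[0,T]}\|u(t,\cdot)\|_{W^{1,\infty}(\Omega)}\le R\Big\}, \]
equipped with the $C([0,T];C^0(\overline{\Omega}))$ distance; the uniform Lipschitz bound and the positivity constraint are both closed under uniform convergence, so $\mathcal{E}_{T,R}$ is a complete metric space. Given $u\in\mathcal{E}_{T,R}$, the map proceeds in three stages: (i) solve \eqref{eq1.4} for $P$ via Lemma \ref{lem:elliptic}, obtaining $\|P(t,\cdot)\|_{C^{2,\alpha}(\overline{\Omega})}\le C\|u(t,\cdot)\|_{C^{0,\alpha}(\overline{\Omega})}\le CR$ from the embedding $W^{1,\infty}\hookrightarrow C^{0,\alpha}$, so that $\nabla P$ satisfies Assumption \ref{ass:P} and, by the invariance of $\Omega$, the flow $\Pi$ of \eqref{eq2.3} is well-defined and maps $\Omega$ into itself; (ii) integrate \eqref{eq2.3} to produce $\Pi$; (iii) solve the scalar ODE above and set $\Phi(u)(t,x):=w(t,\Pi(0,t;x))$.

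To show $\Phi$ maps $\mathcal{E}_{T,R}$ into itself I would bound $\|\Phi(u)(t,\cdot)\|_{C^0}$ directly from the exponential formula by Grönwall, using $\|\Delta P\|_\infty\le CR$ and local Lipschitz control of $h$, and bound the Lipschitz constant by differentiating the formula in $x$. This brings in the variational flow $\nabla_x\Pi$, which solves $\frac{\mathrm{d}}{\mathrm{d}t}\nabla_x\Pi = -d\,D^2P(t,\Pi)\,\nabla_x\Pi$ and is controlled by $e^{CRT}$ since $\|D^2P\|_\infty\le CR$, together with $\nabla(\Delta P)=(\nabla P-\nabla u)/\chi$, which is bounded by $C(1+R)$ \emph{without} invoking third derivatives of $P$. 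Choosing $R$ large relative to $\|u_0\|_{W^{1,\infty}}$ and then $T$ small returns both quantities to $\mathcal{E}_{T,R}$. For the contraction I would compare two inputs $u_1,u_2$ in the $C^0$ metric alone: a linear elliptic estimate (via $W^{2,p}\hookrightarrow C^1$ for $p>2$ in dimension two) gives $\|\nabla P_1-\nabla P_2\|_{C^0}\le C\|u_1-u_2\|_{C^0}$, after which Grönwall controls $\|\Pi_1-\Pi_2\|_{C^0}$ and then $\|w_1-w_2\|_{C^0}\le CT\,\|u_1-u_2\|_{C^0}$, so $\Phi$ is a contraction for $T$ small. Banach's theorem then yields the unique fixed point, which is the claimed solution, with the $W^{1,\infty}$ bound inherited from $\mathcal{E}_{T,R}$.

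The main obstacle, and the reason for the two-norm structure, is that the elliptic regularity needed to define the flow consumes a full Lipschitz derivative of $u$, so one cannot run the contraction in the same norm in which the ball is bounded. The delicate points are therefore twofold: (a) closing the gradient estimate for the self-map so that the Grönwall constants depend only on $R$ and not on uncontrolled higher derivatives of $P$, which is what the rewriting of $\Delta P$ and $\nabla\Delta P$ through the elliptic equation accomplishes; and (b) securing a contraction that is genuinely \emph{linear} in $\|u_1-u_2\|_{C^0}$, which needs the sharper $C^0\to C^1$ elliptic bound rather than the $C^{0,\alpha}\to C^{2,\alpha}$ estimate of Lemma \ref{lem:elliptic}. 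Once these two estimates are in place, the remaining steps are routine Grönwall arguments.
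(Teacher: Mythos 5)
Your proposal is correct and follows essentially the same strategy as the paper's proof: reformulation as an exponential formula along the characteristics, then a Banach fixed point in which the strong bounds ($W^{1,\infty}$ for the density, $W^{2,\infty}$ for the pressure) define the complete metric space while the contraction is run only in the weaker $C^0$/$C^1$ norms, using the Schauder estimate of Lemma \ref{lem:elliptic} for the self-map bounds, $W^{2,p}$ elliptic estimates plus Sobolev embedding for the $C^0\to C^1$ contraction step, and Gr\"onwall controls on the characteristic flow. The only organizational difference is that you iterate on the Eulerian density $u$ alone, composing the elliptic solve, the flow, and an implicitly solved ODE into one map, whereas the paper iterates on the pair $\left(w,P\right)$ with a fully explicit exponential formula; the key estimates coincide, and your device of writing $\nabla\Delta P=\left(\nabla P-\nabla u\right)/\chi$ to avoid third derivatives of $P$ is exactly the paper's device of replacing $\Delta P$ by $\left(P-w\right)/\chi$ along the characteristics.
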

The proof the above theorem will be detailed in Appendix \ref{app:existence}.	
\begin{remark}\label{rem:regularityofP}  Since for any $t\in [0,T]$ and for any $\alpha\in(0,1)$, we have $u(t,\cdot)\in W^{1,\infty}(\Omega)\hookrightarrow C^{0,\alpha}(\overline{\Omega})$, we deduce from Lemma \ref{lem:elliptic} that $P(t,\cdot)\in C^{2,\alpha}(\overline{\Omega})$. Therefore, $(t,x)\to \nabla P(t,x) $ is continuous (since $P \in C([0,T];C^1(\overline{\Omega}))$) and lipchitzian with respect to $ x $ which implies Assumption \ref{ass:P}.
	\end{remark}
	\subsubsection{Conservation law on a volume}
	If the reaction term $ h\equiv 0 $ in System \eqref{eq1.3}-\eqref{eq1.4}, the boundary condition implies the conservation law for $ u $. This can be seen through the solution along the characteristics. 
	In fact, we have the following conservation law.
	\begin{theorem}\label{thm:conservation}
		For each volume $A\subset \Omega $ and each $0\leq s\leq t$ we have
		\begin{equation*}
			\int_{\Pi(t,s;A)}u(t,x)\dd x=\int_A \exp\left(\int_s^t
			h\left(u\left(l,\Pi(l,s;z)\right)\right)\dd  l\right)u(s,z)\dd  z.
		\end{equation*}
		In particular, if we have $ h=0 $, then for any  $0\leq s\leq t$
		\begin{equation*}
			\int_{\Pi(t,s;A)}u(t,x)\dd x=\int_A u(s,z)\dd  z.
		\end{equation*}
		This means the total number of cell in the volume $ A $ is constant along the volumes $\Pi(t,s;A)$.
	\end{theorem}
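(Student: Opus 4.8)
The plan is to reduce the identity to a scalar ODE along each characteristic by performing the change of variables $x=\Pi(t,s;z)$ in the left-hand integral. Writing $J(t,s;z):=\det D_z\Pi(t,s;z)$ for the Jacobian of the flow map $z\mapsto\Pi(t,s;z)$, this change of variables turns $\int_{\Pi(t,s;A)}u(t,x)\dd x$ into $\int_A u(t,\Pi(t,s;z))\,J(t,s;z)\dd z$. By Lemma \ref{lem2.7} the flow is a bijection of $\Omega$ onto itself, and since $J(s,s;z)=1>0$ with $J$ continuous in $t$, the Jacobian stays positive so that no absolute value is needed. It therefore suffices to establish, for each fixed $z$, the pointwise identity
\begin{equation*}
u(t,\Pi(t,s;z))\,J(t,s;z)=u(s,z)\,\exp\left(\int_s^t h\bigl(u(l,\Pi(l,s;z))\bigr)\dd l\right).
\end{equation*}

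To prove this I would track the two factors $w(t,z):=u(t,\Pi(t,s;z))$ and $J(t,s;z)$ separately. Differentiating $w$ in $t$ and using the characteristic equation \eqref{eq2.3} together with the PDE \eqref{eq1.3} (expanding the divergence as $\operatorname{div}(u\nabla P)=\nabla u\cdot\nabla P+u\Delta P$), the advective contributions $\pm d\,\nabla u\cdot\nabla P$ cancel and one is left with
\begin{equation*}
\frac{\dd}{\dd t}w(t,z)=w(t,z)\bigl(d\,\Delta P(t,\Pi(t,s;z))+h(w(t,z))\bigr).
\end{equation*}
For the Jacobian I would invoke the Liouville (Abel--Jacobi) formula for the flow of the velocity field $v(t,x)=-d\,\nabla P(t,x)$, whose spatial divergence is $-d\,\Delta P$, giving
\begin{equation*}
\frac{\dd}{\dd t}J(t,s;z)=-d\,\Delta P(t,\Pi(t,s;z))\,J(t,s;z),\qquad J(s,s;z)=1.
\end{equation*}

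The decisive cancellation then appears in the product $wJ$: by the Leibniz rule the two $\pm d\,\Delta P\cdot wJ$ terms annihilate each other, leaving the linear ODE $\tfrac{\dd}{\dd t}(wJ)=h(w)\,(wJ)$. Integrating from $s$ to $t$ with $w(s,z)=u(s,z)$ and $J(s,s;z)=1$ yields the displayed pointwise identity, and integrating over $A$ and undoing the change of variables gives the theorem; the case $h\equiv0$ is then immediate. The steps I expect to require the most care are the regularity needed to differentiate the flow with respect to its initial point and to apply Liouville's formula, which demands $v$ to be $C^1$ in $x$: this follows from Remark \ref{rem:regularityofP}, since $P(t,\cdot)\in C^{2,\alpha}(\overline{\Omega})$ gives $\nabla P(t,\cdot)\in C^{1,\alpha}(\overline{\Omega})$. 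The positivity of $J$ needed for the change of variables likewise rests on its continuity in $t$ and the bijectivity asserted in Lemma \ref{lem2.7}.
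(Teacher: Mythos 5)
Your proof is correct and has the same skeleton as the paper's: change variables through the flow map, reduce to the pointwise identity $u(t,\Pi(t,s;z))\,\det J_{\Pi}(t,s;z)=u(s,z)\exp\bigl(\int_s^t h\,\dd l\bigr)$, and integrate over $A$. The difference lies in where the cancellation is performed. The paper never forms your product ODE; instead it quotes two explicit exponential formulas --- the representation of $u$ along characteristics from the existence proof, $u(t,\Pi(t,s;z))=u(s,z)\exp\bigl(\int_s^t h+\tfrac{d}{\chi}(P-u)\,\dd l\bigr)$, and the Jacobian formula of Lemma \ref{lem2.7}, $\det J_{\Pi}=\exp\bigl(\int_s^t \tfrac{d}{\chi}(u-P)\,\dd l\bigr)$ --- both of which use the elliptic equation to replace $\Delta P$ by $(P-u)/\chi$, and the $\tfrac{d}{\chi}(P-u)$ terms cancel when the two exponentials are multiplied. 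Your version cancels $d\,\Delta P$ directly in the Leibniz rule for $wJ$, so the elliptic structure is never invoked: the argument works verbatim for any $C^1$ velocity field, a small gain in generality and transparency. Note also that Lemma \ref{lem2.7} is itself proved by exactly your Liouville computation, so you are re-deriving it rather than citing it. One caveat deserves mention: differentiating $w(t,z)=u(t,\Pi(t,s;z))$ by the chain rule and substituting the PDE presupposes that $u$ is classically differentiable, which Theorem \ref{thm:existence} does not provide (it gives only $u(t,\cdot)\in W^{1,\infty}(\Omega)$ and continuity in time). The paper sidesteps this by working with the notion of solution integrated along the characteristics: the ODE you derive for $w$ holds by construction of the solution (it is the defining fixed-point relation of the existence proof), not via the chain rule. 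So to make your argument rigorous for the solutions actually produced by Theorem \ref{thm:existence}, you should cite that representation formula as the definition of $w$ rather than derive it from the pointwise PDE.
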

	Before proving Theorem \ref{thm:conservation}, we need the following lemma.
	\begin{lemma}\label{lem2.7}
		Let $ T >0 $ and $\left\{ \Pi(t,s;x)\right\} _{t,s\in \left[ 0,T \right] }$ to be the characteristic flow generated by \eqref{eq2.3}. Then the map $x\mapsto \Pi(t,s;x)$ is continuously differentiable and one has the determinant of Jacobi matrix: 
		\begin{equation}\label{eq2.6}
			\det J_{\Pi}(t,s;x)=\exp \left( \int_{s}^{t}\frac{d}{\chi} \left(u(l,\Pi(l,s;x))-P(l,\Pi(l,s;x)) \right)\dd l\right).
		\end{equation} 
		where $ J_{\Pi}(t,s;x) $ is the Jacobian matrix of $ \Pi (t,s;x) $ with respect to $ x $ at point  $ (t,s;x) $.
	\end{lemma}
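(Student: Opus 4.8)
The plan is to recognize this as an instance of the Liouville (Jacobi) formula for the evolution of the Jacobian determinant along a flow, and then to use the elliptic equation \eqref{eq1.4} to rewrite the Laplacian of $P$ in terms of $u$ and $P$ themselves. First I would establish that $x\mapsto\Pi(t,s;x)$ is continuously differentiable. By Remark \ref{rem:regularityofP} we know $P(t,\cdot)\in C^{2,\alpha}(\overline{\Omega})$ for each $t$, so the velocity field $F(t,y):=-d\,\nabla P(t,y)$ is $C^1$ in $y$ with spatial Jacobian $D_yF(t,y)=-d\,D^2P(t,y)$ (the scaled Hessian of $P$), and this derivative is jointly continuous in $(t,y)$ since $P\in C([0,T];C^{2,\alpha}(\overline{\Omega}))$. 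The standard theorem on differentiable dependence of ODE solutions on their initial data then guarantees that $J_{\Pi}(t,s;x):=\partial_x\Pi(t,s;x)$ exists, is continuous, and solves the variational equation obtained by differentiating \eqref{eq2.3} with respect to $x$:
\begin{equation*}
\partial_t J_{\Pi}(t,s;x)=-d\,D^2P\big(t,\Pi(t,s;x)\big)\,J_{\Pi}(t,s;x),\qquad J_{\Pi}(s,s;x)=I,
\end{equation*}
the initial condition being the identity because $\Pi(s,s;x)=x$.

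Next I would apply Jacobi's formula $\frac{\dd}{\dd t}\det J=\det J\cdot\mathrm{tr}\big(J^{-1}\partial_t J\big)$, which for a linear matrix ODE $\partial_t J=A(t)\,J$ reduces to $\frac{\dd}{\dd t}\det J=\mathrm{tr}(A(t))\,\det J$. Here $A(t)=-d\,D^2P(t,\Pi(t,s;x))$, and since the trace of the Hessian is the Laplacian,
\begin{equation*}
\mathrm{tr}\big(A(t)\big)=-d\,\mathrm{tr}\big(D^2P(t,\Pi)\big)=-d\,\Delta P\big(t,\Pi(t,s;x)\big).
\end{equation*}
Thus $t\mapsto\det J_{\Pi}(t,s;x)$ satisfies the scalar linear ODE $\frac{\dd}{\dd t}\det J_{\Pi}=-d\,\Delta P(t,\Pi)\,\det J_{\Pi}$ with initial value $\det J_{\Pi}(s,s;x)=1$.

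Finally I would eliminate $\Delta P$ using the elliptic equation \eqref{eq1.4}: since $(I-\chi\Delta)P=u$ gives $\Delta P=(P-u)/\chi$, the coefficient becomes $-d\,\Delta P=\frac{d}{\chi}(u-P)$, evaluated along the characteristic at $(l,\Pi(l,s;x))$; integrating the scalar ODE from $s$ to $t$ with initial value $1$ yields exactly \eqref{eq2.6}. The main technical obstacle is the first step — justifying that the flow is genuinely $C^1$ in $x$ and that $J_{\Pi}$ solves the variational equation — because this requires the velocity field to be $C^1$ in space, which is not automatic from the mere Lipschitz hypothesis of Assumption \ref{ass:P} but does follow from the stronger $C^{2,\alpha}$ regularity of $P$ furnished by Lemma \ref{lem:elliptic} together with Remark \ref{rem:regularityofP}. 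Once differentiability of the flow is secured, the remainder is the routine application of Jacobi's formula and the substitution from the elliptic equation.
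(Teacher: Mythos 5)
Your proposal is correct and follows essentially the same route as the paper's own proof: establish $C^1$ dependence of the flow on $x$ via the $C^{2,\alpha}$ regularity of $P$ from Lemma \ref{lem:elliptic} and Remark \ref{rem:regularityofP}, differentiate \eqref{eq2.3} to get the variational equation, apply Jacobi's formula for the determinant, and substitute $\Delta P=(P-u)/\chi$ from \eqref{eq1.4} before integrating. Your write-up is in fact slightly more explicit than the paper's about why the velocity field must be $C^1$ (not merely Lipschitz) for the variational equation to hold, but the argument is the same.
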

	\begin{proof}
		From Theorem \ref{thm:existence} and Remark \ref{rem:regularityofP}, the mapping $ (t,x)\to P(t,x) $ is $ C([0,T];C^{1}(\overline{\Omega})) $ and $P(t,\cdot) \in C^{2,\alpha}(\overline{\Omega})$ for any $\alpha\in (0,1)$ if $ u_0 \in W^{1,\infty}(\Omega)$. This ensures the characteristics $ x\to \Pi (t,s;x) $ is continuously differentiable. Taking the partial derivative of Equation \eqref{eq2.3} with respect to $ x $ yields
		\begin{equation*}
			\begin{cases}
				\partial_{t}J_{\Pi}(t,s;x)= -d\,J_{\nabla P}(t,\Pi(t,s;x)) J_{\Pi}(t,s;x) \\ 
				J_{\Pi}(s,s;x)=\mathrm{Id},
			\end{cases}
		\end{equation*}
		where $ J_{\nabla P}(t,\Pi(t,s;x)) $ is the Jacobian matrix of $ \nabla P(t,x) $ with respect to $ x $ at point  $ (t,\Pi(t,s;x)) $.
		For any  matrix-valued $ C^1 $ function $ A:t\mapsto A(t) $, the Jacobian formula reads as follows 
		\begin{equation*}
			\frac{\dd  }{\dd  t} \det A(t) = \det A(t)\times \text{Trace}\left(A^{-1}(t) \frac{d}{d t}A(t)\right).
		\end{equation*}
		Hence, we obtain
		\begin{align*}
			\frac{\dd  }{\dd  t} \det J_{\Pi}(t,s;x)  &= \det J_{\Pi}(t,s;x) \times \text{Trace}\left( J_{\Pi}(t,s;x)^{-1} J_{\nabla P}(t,\Pi(t,s;x))  J_{\Pi}(t,s;x)\right)\\
			&= \det J_{\Pi}(t,s;x) \times \text{Trace}\left( J_{\nabla P}(t,\Pi(t,s;x))\right)\\
		\end{align*}
		and since $ \text{Trace}\left( J_{\nabla P}(t,\Pi(t,s;x))\right)= (\Delta P)(t,\Pi(t,s;x))=-\frac{1}{\chi} \left(u(t,\Pi(t,s;x))-P(t,\Pi(t,s;x))\right) $. Therefore, we have
		\begin{equation*}
			\begin{cases}
				\dfrac{\dd  }{\dd  t} \det J_{\Pi}(t,s;x)  = \det J_{\Pi}(t,s;x) \times \dfrac{d}{\chi}\big[u(t,\Pi(t,s;x))-P(t,\Pi(t,s;x))\big]\\
				\det J_{\Pi}(s,s;x) = 1.
			\end{cases}
		\end{equation*}
		Therefore the result follows.
	\end{proof}
	\begin{proof}[Proof of Theorem \ref{thm:conservation}]
		Let $\left\{ \Pi(t,s;x)\right\} _{t,s\in \left[ 0,T \right] }$ to be the characteristic flow generated by \eqref{eq2.3}. Given any measurable set $ A\subset \Omega $ and any $ 0\leq s\leq t $, we integrate $ u(t,x) $ over the volume $ \Pi(t,s;A) $ with respect to $ x $ 
		\begin{equation}\label{eq2.7}
			\int_{\Omega} \mathbbm{1}_{\Pi(t,s;A)}(x)u(t,x)\dd x= \int_{\Omega}\mathbbm{1}_{A}(z)u(t,\Pi(t,s;z))\det J_{\Pi}(t,s;z)\dd z,
		\end{equation}
		where we changed the variable $ x $ to $ \Pi(t,s;z) $ on the right-hand-side. 
		
		For the right-hand-side, we will prove in Appendix \ref{app:existence} that 
		\[ 
		u(t,\Pi(t,s;z))=u(s,z)\exp\left(\int_s^t h(u(l,\Pi(l,s;z)))+\frac{d}{\chi}\left(P(l,\Pi(l,s;z))-u(l,\Pi(l,s;z))\right)\dd l \right). 
		\]
		Combining with \eqref{eq2.6} we obtain that 
		\[ u(t,\Pi(t,s;z))\det J_{\Pi}(t,s;z) = u(s,z)\exp\left(\int_s^t h(u(l,\Pi(l,s;z)))\dd l \right). \]
		Substituting the above equation into \eqref{eq2.7} gives us
		\begin{equation*}
			\int_{\Omega} \mathbbm{1}_{\Pi(t,s;A)}(x)u(t,x)\dd x= \int_{\Omega}\mathbbm{1}_{A}(z) u(s,z)\exp\left(\int_s^th(u(l,\Pi(l,s;z)))\dd l \right)\dd z,
		\end{equation*}
		which is equivalent to 
		\begin{equation*}
			\int_{\Pi(t,s;A)}u(t,x)\dd x=\int_A \exp\left(\int_s^t
			h\left(u\left(l,\Pi(l,s;z)\right)\right)\dd  l\right)u(s,z)\dd  z.
		\end{equation*}
		The result follows.
	\end{proof}
	\subsection{Multi-species model}
	\subsubsection{Multi-species ODE model}
	Let us consider the corresponding two species model without the spatial variable $ x $ that is $ u_i=u_i(t) $ for $ i=1,2 .$
	\begin{equation}\label{eq:odemodel}
		\begin{cases}
			\dfrac{\dd u_i}{\dd t} =u_i h_i(u_1,u_2)\quad i=1,2,\\
			u_i(0)= u_{i,0}\in\R_+.
		\end{cases}
	\end{equation}
	We adopt the Lotka-Volterra model by letting
	\begin{equation}\label{eq:h}
		h_i(u_1,u_2)= b_i-\delta_i-\sum_{j=1}^{2} a_{ij} u_j,\quad i=1,2. 
	\end{equation}
	where $b_i>0,\, i=1,2$ are the growth rate, $a_{ij}\geq 0,\, i\neq j$ represent the mutual competition between the species, $ a_{ii} $ is the competition among the same species  and $ \delta_i $ is the additional mortality rate caused by drug treatment. In Section 2.2.1 we will always assume $ \delta_i =0$ for $ i=1,2 $ (when $ \delta_i>0 $, one can regard $ b_i-\delta_i $ as a whole).
	If we consider \eqref{eq:odemodel} in the absence of the other species, we can rewrite \eqref{eq:h} as  
	\begin{equation*}
		h_i(u_1,u_2)= b_i- a_{ii} u_i,\quad i=1,2. 
	\end{equation*}
	We always assume that for each $ i $, $ a_{ii}>0 $ meaning that each species alone exhibits logistic growth. 
	This model has been considered by many authors (for example, see \cite{Murray2003,Zeeman1995}). Here we give a short summary of some qualitative properties of the solution to \eqref{eq:odemodel} in order to compare with the PDE model.
	\medskip
	 
	\noindent\textbf{Equilibrium and stability for \eqref{eq:odemodel}-\eqref{eq:h}}
	
	One can easily compute the system has the following equilibrium
	\[ E_0 =(0,0),\quad E_1= \left(P_1,0\right),\quad E_2= \left(0,P_2\right),\quad E^*=(u_1^*,u_2^*) \]
	where 
	\begin{equation}\label{eq:equilibrium}
		P_1:=\frac{b_1}{a_{11}},\quad P_2:=\frac{b_2}{a_{22}},\quad E^* = \left(\dfrac{a_{22} b_1-a_{12} b_2}{a_{11} a_{22}-a_{12} a_{21}}, \dfrac{a_{21} b_1-a_{11} b_2}{a_{12} a_{21}-a_{11} a_{22}}\right)
	\end{equation} 
	The solution $ E^* $ is only of relevance when $ a_{12} a_{21}\neq a_{11} a_{22} $ and $ (u_1^*,u_2^*) $ is strictly positive, which is equivalent to say 
	\[ \begin{cases}
	\dfrac{a_{12}}{a_{11}}>\dfrac{P_1}{P_2}\\
	\dfrac{a_{21}}{a_{22}}>\dfrac{P_2}{P_1}
	\end{cases} \text{ or }\quad \begin{cases}
	\dfrac{a_{12}}{a_{11}}<\dfrac{P_1}{P_2}\\
	\dfrac{a_{21}}{a_{22}}<\dfrac{P_2}{P_1}
	\end{cases}.\]
	
	
	A scheme of the qualitative behavior of the phase trajectory is given in Figure \ref{Figure:competition} by numerical simulations.
	
	\begin{figure}[H]
		\begin{center}
			\includegraphics[width=0.33\textwidth]{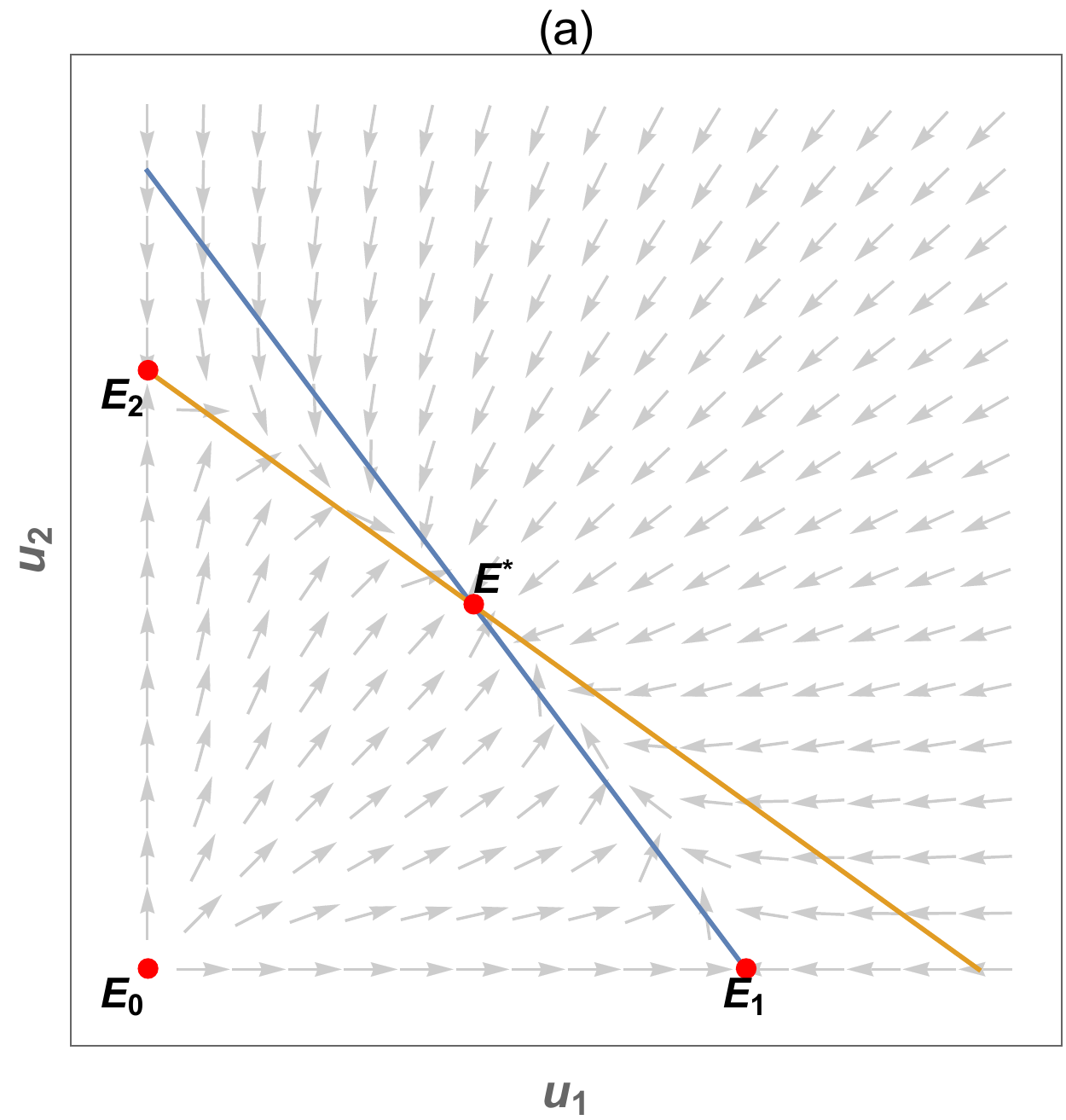}\,\,\includegraphics[width=0.33\textwidth]{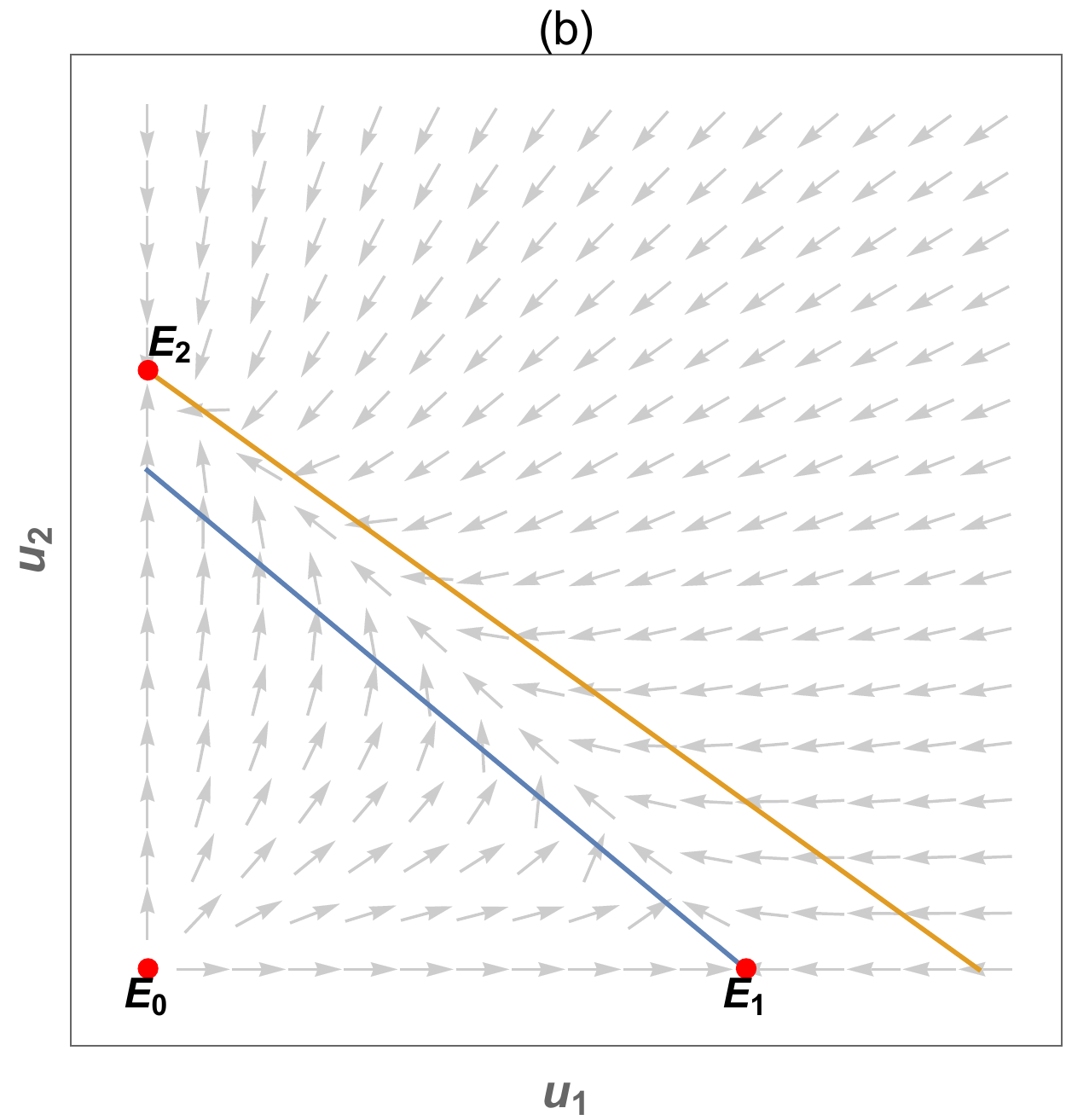}\\
			\includegraphics[width=0.33\textwidth]{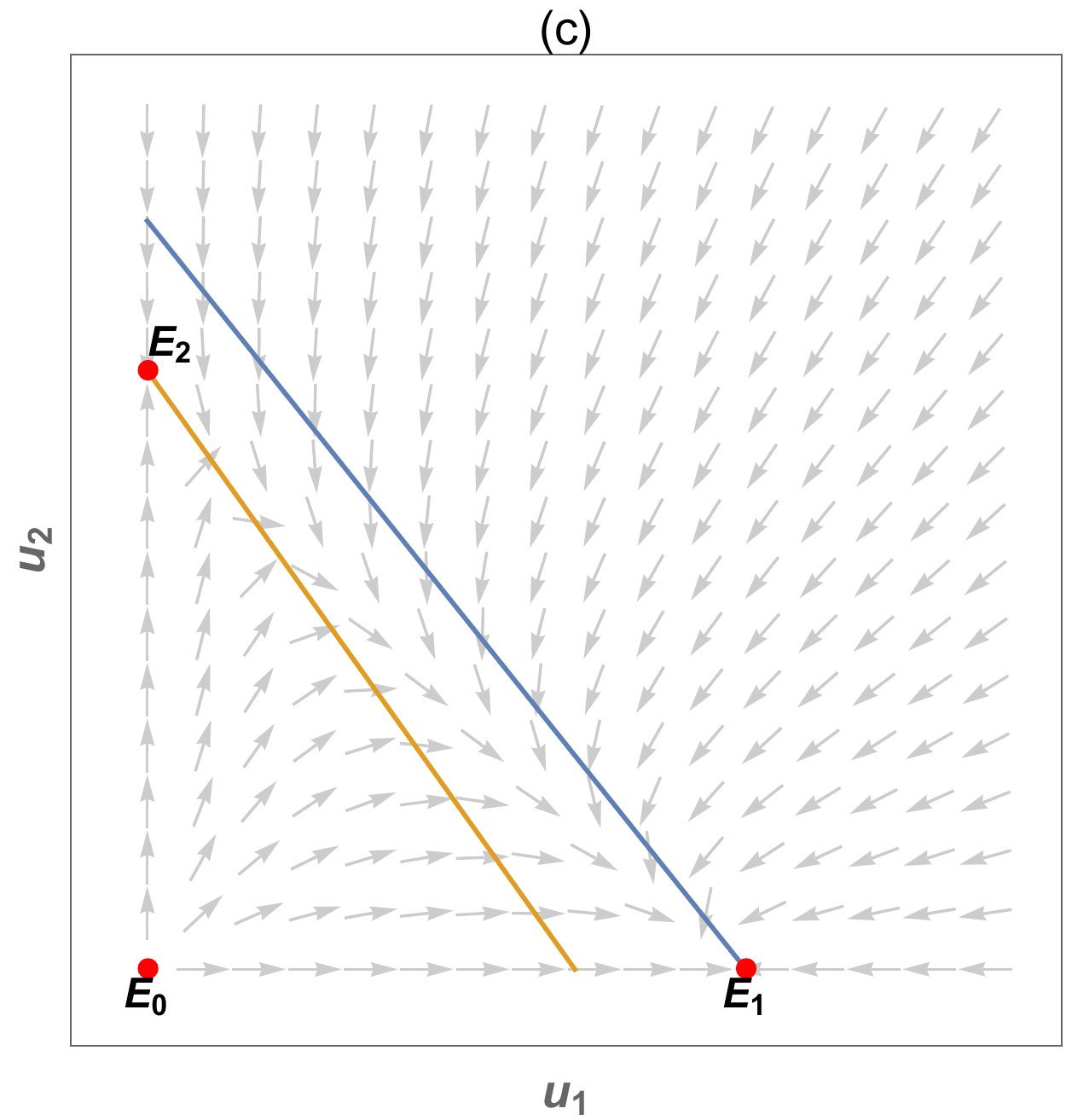}\,\,
			\includegraphics[width=0.33\textwidth]{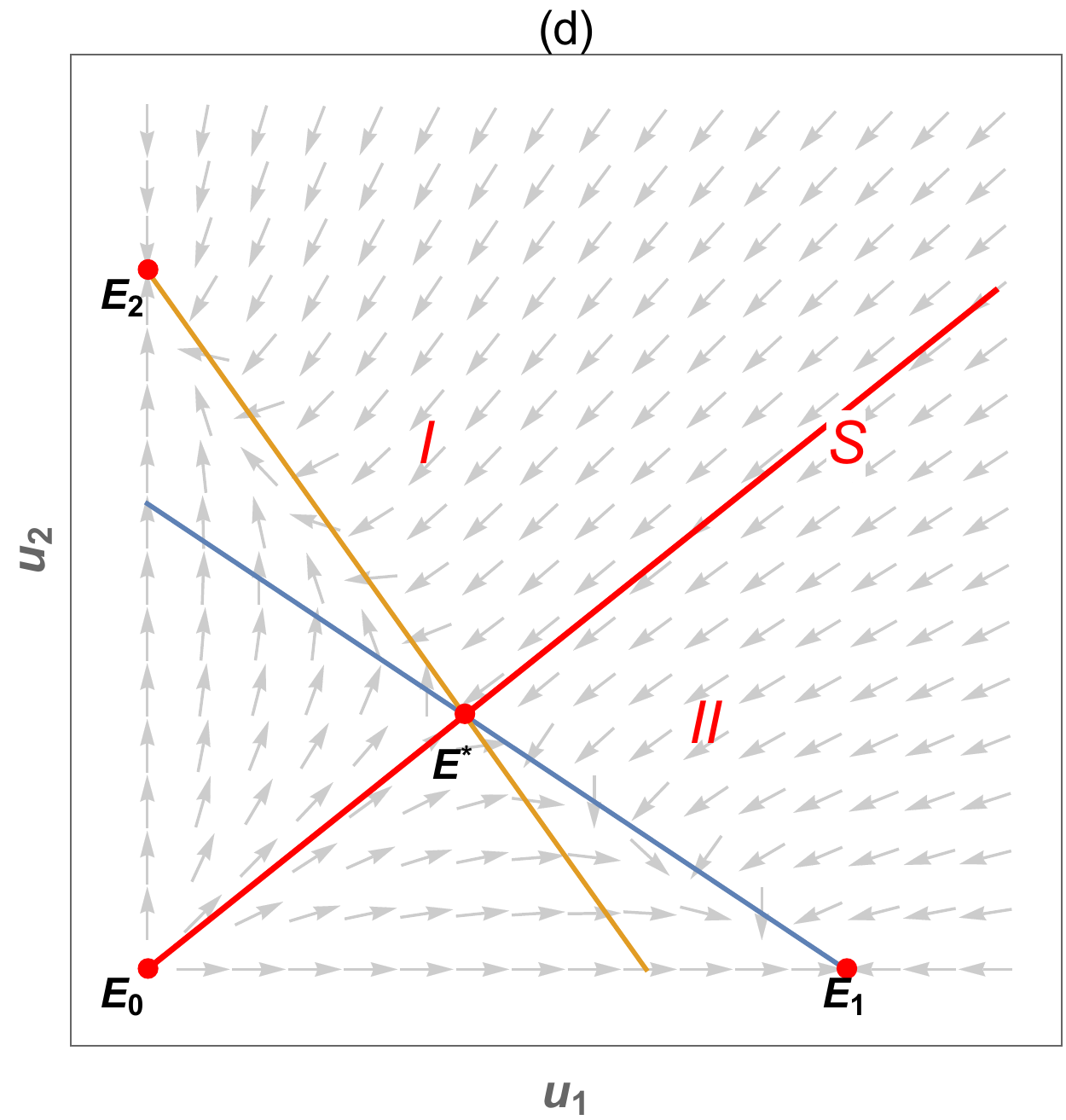}
		\end{center}
		\caption{\textit{A scheme of the qualitative behavior of the phase trajectory for various cases. {\rm\textbf{(a)}} $ a_{12}/a_{11}  < P_1/P_2,\, a_{21}/a_{22}  < P_2/P_1.$ Only the positive steady	state $ E^* $ is stable and all trajectories tend to it. {\rm\textbf{(b)}}	$ a_{12}/a_{11}> P_1/P_2, a_{21}/a_{22} < P_2/P_1.$ Only one stable steady state $ E_2 $ exists with the whole positive quadrant its domain of attraction. {\rm\textbf{(c)}}	$ a_{12}/a_{11} < P_1/P_2, a_{21}/a_{22}  > P_2/P_1.$ Only one stable steady state $ E_1 $ exists with the whole positive quadrant its domain of attraction. {\rm\textbf{(d)}} $ a_{12}/a_{11}   > P_1/P_2, a_{21}/a_{22} > P_2/P_1.$ $ E_1 $ and $ E_2 $ are stable steady	states, each of which has a domain of attraction namely \textbf{I} and \textbf{II}, separated by a separatrix \textbf{S} which is the stable manifold of equilibria $ E^* $. }}\label{Figure:competition}
	\end{figure}
	We adapt the main stability results from Zeeman \cite{Zeeman1995} where the author considered a general $ n $--species extinction case, Murray \cite[Chapter 3.5]{Murray2003} and Hirsch \cite[Chapter 11]{Hirsch2012}  to system \eqref{eq:odemodel}-\eqref{eq:h} for the following fours cases (i)-(iv) and discuss their biological implications.
	\begin{proposition}\label{prop:ode}
		For system \eqref{eq:odemodel}-\eqref{eq:h}, suppose for each $ i=1,2 $, $ b_i>0,a_{ii}>0 $ and $ a_{ij}\geq 0 $ for any $ i\neq j $. Let $ P_1=a_{11}/b_1,P_2=a_{22}/b_2 $ be the equilibrium for each species alone and assume the initial value $ (u_{1,0},u_{2,0})  $ lies strictly in the first quadrant that is $ u_{1,0}>0 $ and $ u_{2,0}>0 $. Then for the following four cases we have
		\begin{itemize}
			\item[\rm(i).]  $ a_{12}/a_{11}  < P_1/P_2,\, a_{21}/a_{22}  < P_2/P_1.$ This case corresponds to Figure \ref{Figure:competition} (a).  The system \eqref{eq:odemodel} has four positive equilibrium, namely $ E_0,E_1,E_2$ and $ E^* $. In such case, only $ E^* $ is global globally asymptotic stable in the region $ \{(u_1,u_2)\in \R^2 \,|\, u_1>0,u_2>0\} $. 
			\item[\rm(ii).] $ a_{12}/a_{11}> P_1/P_2, a_{21}/a_{22} < P_2/P_1.$ This case corresponds to Figure \ref{Figure:competition} (b). The system \eqref{eq:odemodel} has three positive equilibrium, namely $ E_0,E_1$ and $ E_2 $. Only $ E_2 $ is globally stable in the positive quadrant excepted for the axis $u_1=0$. 
			\item[\rm(iii).] 	$ a_{12}/a_{11} < P_1/P_2, a_{21}/a_{22}  > P_2/P_1.$ This case corresponds to Figure \ref{Figure:competition} (c). The analysis of the stability is similar to the case (ii). Only $ E_1 $ is globally stable in the positive quadrant excepted for the axis $u_2=0$. 
			\item[\rm(iv).]  $ a_{12}/a_{11}   > P_1/P_2, a_{21}/a_{22} > P_2/P_1.$ This case corresponds to Figure \ref{Figure:competition} (d). In this case, system \eqref{eq:odemodel} has four equilibrium, where $ E_1 $ and $ E_2 $ are stable while $ E^* $ is a saddle point. The steady states $ E_1 $ and $ E_2 $ have two non-overlapping domains of attraction, separated by the stable manifold \textbf{S} of equilibria $ E^* $. 
		\end{itemize}
	\end{proposition}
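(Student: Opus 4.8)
The plan is to combine a local stability analysis by linearization with a global phase-plane argument resting on Dulac's criterion and the Poincaré--Bendixson theorem, and finally to invoke the theory of planar competitive systems to describe the separatrix in the bistable case. First I would record the structural facts that make the rest go through. The open positive quadrant $\{u_1>0,u_2>0\}$ is forward invariant for \eqref{eq:odemodel}, since each axis is invariant (if $u_{i,0}=0$ then $u_i(t)\equiv 0$) and a trajectory starting strictly inside cannot cross one. Boundedness follows from the logistic comparison $\frac{\dd u_i}{\dd t}=u_i\bigl(b_i-a_{ii}u_i-a_{ij}u_j\bigr)\le u_i(b_i-a_{ii}u_i)$, which yields $\limsup_{t\to\infty}u_i(t)\le P_i$; hence $[0,P_1+\varepsilon]\times[0,P_2+\varepsilon]$ is absorbing and every forward orbit has a nonempty, compact $\omega$-limit set in the closed quadrant.

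Next I would compute the Jacobian of \eqref{eq:odemodel}--\eqref{eq:h} and evaluate it at each equilibrium in \eqref{eq:equilibrium}. At $E_0$ the eigenvalues are $b_1,b_2>0$, so $E_0$ is always a source. At $E_1=(P_1,0)$ the Jacobian is upper triangular with eigenvalues $-b_1$ and $b_2-a_{21}P_1$, and a short computation shows $b_2-a_{21}P_1<0$ exactly when $a_{21}/a_{22}>P_2/P_1$; by symmetry $E_2$ is a sink precisely when $a_{12}/a_{11}>P_1/P_2$, each being a saddle in the opposite regime. At $E^*$ the identities $h_1(E^*)=h_2(E^*)=0$ reduce the Jacobian to
\[
J(E^*)=\begin{pmatrix} -a_{11}u_1^* & -a_{12}u_1^* \\ -a_{21}u_2^* & -a_{22}u_2^* \end{pmatrix},
\]
whose trace is negative and whose determinant has the sign of $a_{11}a_{22}-a_{12}a_{21}$. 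Multiplying the two defining inequalities of each case shows this quantity is positive in case (i), so $E^*$ is a sink, and negative in case (iv), so $E^*$ is a saddle. This settles all the local assertions.

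The global statements require ruling out recurrence. I would apply Dulac's criterion with $B(u_1,u_2)=1/(u_1u_2)$: a direct computation gives $\partial_{u_1}(Bu_1h_1)+\partial_{u_2}(Bu_2h_2)=-a_{11}/u_2-a_{22}/u_1<0$ throughout the open quadrant, so no periodic orbit lies inside it. Together with boundedness, the Poincaré--Bendixson theorem forces every interior $\omega$-limit set to be a single equilibrium (a heteroclinic cycle is excluded since the boundary equilibria are joined only by orbits along the invariant axes). In cases (ii) and (iii) this immediately gives convergence of every interior trajectory to the unique interior-attracting sink ($E_2$, resp.\ $E_1$), the only alternatives being the source $E_0$ and a boundary saddle, neither of which can be the limit of an interior orbit. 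In case (i) the same reasoning, with $E^*$ the only sink and $E_0,E_1,E_2$ repelling into the interior, yields global convergence to $E^*$.

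The hardest part is the bistable case (iv), where I must show that the stable manifold of the saddle $E^*$ is a single curve $\mathbf{S}$ partitioning the open quadrant into the basins of $E_1$ and $E_2$. Here the local saddle structure of $E^*$ is not enough; I would invoke the monotonicity of the planar competitive system \eqref{eq:odemodel} with respect to the competitive (south-east) cone order. By the theory of two-dimensional competitive systems (\cite{Hirsch2012}, and the extinction analysis of \cite{Zeeman1995}), the stable manifold of the interior saddle is a monotone curve, and every trajectory off it converges to one of the two stable nodes, with $\mathbf{S}$ serving as the separatrix. Establishing this monotone, globally separating structure---rather than merely the local behavior near $E^*$---is the main obstacle, and it is exactly where the cited results are genuinely needed.
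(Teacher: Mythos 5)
Your proposal is correct, but it is worth noting that the paper contains no proof of Proposition \ref{prop:ode} at all: the authors explicitly ``adapt the main stability results'' of Zeeman \cite{Zeeman1995}, Murray \cite[Chapter 3.5]{Murray2003} and Hirsch \cite[Chapter 11]{Hirsch2012}, so the paper's route is pure citation, while yours is a self-contained reconstruction of exactly the arguments those references contain. Your computations check out: the Jacobian at $E_1$ is upper triangular with eigenvalues $-b_1$ and $b_2-a_{21}P_1$, whose sign matches the stated threshold $a_{21}/a_{22}$ versus $P_2/P_1$ (note you correctly use $P_i=b_i/a_{ii}$ from \eqref{eq:equilibrium}, silently repairing the typo $P_1=a_{11}/b_1$ in the proposition's statement); the reduced Jacobian at $E^*$ has negative trace and determinant of the sign of $a_{11}a_{22}-a_{12}a_{21}$, and multiplying the case-defining inequalities indeed fixes that sign; the Dulac function $B=1/(u_1u_2)$ gives divergence $-a_{11}/u_2-a_{22}/u_1<0$ on the open quadrant, where any periodic orbit must lie since the axes are invariant. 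Two places are terse but repairable: first, your one-line exclusion of heteroclinic cycles should be expanded to the observation that $E_0$ is a source (nothing can limit to it), sinks admit no outgoing connections, and the stable manifold of a boundary saddle $E_1$ or $E_2$ coincides with the corresponding invariant axis, so no interior orbit can converge to it and no polycycle can form; this same observation is what upgrades Poincar\'e--Bendixson convergence-to-an-equilibrium into convergence to the unique interior-attracting sink in cases (i)--(iii). Second, you do not verify that $E^*$ actually lies in the open quadrant in cases (i) and (iv), though this follows from the same pair of inequalities applied to the explicit formula in \eqref{eq:equilibrium}. For case (iv) you defer the global separatrix structure of $\mathbf{S}$ to the theory of planar competitive systems, which is legitimate and is precisely the point where the paper itself leans on \cite{Hirsch2012} and \cite{Zeeman1995}; your treatment is therefore at least as complete as the paper's, and has the added value of making cases (i)--(iii) fully elementary.
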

	\begin{remark}
		Although among the four cases, {\rm (ii)} and {\rm (iii)} always lead to the principle of exclusion and so do (iv) due to the natural perturbation in population levels, we still have the case {\rm (i)} where the two species can coexist in the long term. As we further develop our PDE model for \eqref{eq:odemodel}, we can show numerically that the principle of exclusion dominates even when case {\rm (i)} is satisfied and this is a evident difference compared to ODE model \eqref{eq:odemodel}.
	\end{remark}
	
	\subsubsection{Multi-species PDE model}
	We study a two species population dynamic model on a unit open disk $ \Omega\subset \R^2 $ given as follows
	\begin{equation}		\label{eq2.1}
		\begin{cases}
			\begin{aligned}
				&\partial _{t}u_1(t,x) -d_1\,\mathrm{div }\bigl(u_1(t,x)\nabla P(t,x)\bigr)=
				u_1(t,x)h_1((u_1,u_2)(t,x))\\
				&\partial _{t}u_2(t,x) -d_2\,\mathrm{div }\bigl(u_2(t,x)\nabla P(t,x)\bigr)=
				u_2(t,x)h_2((u_1,u_2)(t,x))\\
				&\big(I-\chi\Delta \big)P(t,x) = u_1(t,x)+u_2(t,x)
			\end{aligned}
			&\text{in } [0,T]\times\Omega\\
			\nabla P(t,x)\cdot \mathbf{\nu}(x) =0\,& \text{on }  [0,T)\times\partial\Omega, 
		\end{cases}
	\end{equation}
	where $ \mathbf{\nu} $ is the outward normal vector, $ d_i $ is the dispersion coefficient, $ \chi $ is the sensing coefficient. Recall the function $ h_i $ is of form
	\begin{equation*}
		h_i(u_1,u_2)= b_i-\delta_i-\sum_{j=1}^{2} a_{ij} u_j,\quad i=1,2. 
	\end{equation*}
	System \eqref{eq2.1} is supplemented with initial distribution
	\begin{equation}\label{eq2.2}
		\textbf{u}_0(\cdot):=\left(
		u_1(0,\cdot),
		u_2(0,\cdot)\right)\in C^1(\overline{\Omega})^2.  
	\end{equation}

	\subsubsection{Segregation property}\label{Sect.segregation}
	From the mono-layer cell populations co-culture experiments, we can see that once the two cell populations confront each other, they will stop growing, thus, forming the separated islets. We can prove that our model \eqref{eq2.1} preserves such segregation property. 
	\begin{theorem}\label{THM3.1} Suppose $ \mathbf{u}=(u_1,u_2)(t,x) $ is the solution of \eqref{eq2.1}-\eqref{eq2.2} and assume $ d_1=d_2=d $  in \eqref{eq2.1}.
		Then  for any initial distribution with $ u_1(0,x)u_2(0,x)=0 $ for all $ x\in \Omega$, we have $  u_1(t,x)u_2(t,x)=0 $ for any $ t>0 $ and $ x\in \Omega$.
	\end{theorem}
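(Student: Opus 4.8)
The plan is to exploit the fact that when $d_1=d_2=d$ both densities are transported along the \emph{same} characteristic flow $\{\Pi(t,s;\cdot)\}$ generated by \eqref{eq2.3}, and that the reaction term in each equation carries $u_i$ as a multiplicative factor. These two features together force the zero set of each species to be invariant along the flow, so that the disjointness of the two zero sets persists for all time.

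First I would pass to the solution along the characteristics. Writing $w_i(t,x):=u_i(t,\Pi(t,0;x))$ for $i=1,2$ and differentiating along the flow, exactly as in the computation preceding Theorem \ref{thm:conservation}, the divergence form $\partial_t u_i - d\,\mathrm{div}(u_i\nabla P)=u_i h_i$ becomes, after using $\dot\Pi=-d\,\nabla P$ and $\Delta P=\tfrac1\chi\bigl(P-(u_1+u_2)\bigr)$,
\begin{equation*}
	\frac{\dd }{\dd t}w_i(t,x)=w_i(t,x)\,g_i(t,x),
\end{equation*}
where
\begin{equation*}
	g_i(t,x):=h_i\bigl((w_1,w_2)(t,x)\bigr)+\frac{d}{\chi}\Bigl(P(t,\Pi(t,0;x))-(w_1+w_2)(t,x)\Bigr).
\end{equation*}
For each fixed $x$ this is a scalar linear ODE in $w_i$, so it integrates to
\begin{equation*}
	w_i(t,x)=w_i(0,x)\exp\!\left(\int_0^t g_i(l,x)\,\dd l\right).
\end{equation*}
Crucially, both $w_1$ and $w_2$ are built through the \emph{same} map $\Pi(t,0;\cdot)$; this is the only place where the hypothesis $d_1=d_2$ enters, and it is what makes the pointwise comparison below meaningful.

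From the integral representation the zero set of each $w_i$ is frozen: $w_i(0,x)=0$ forces $w_i(t,x)=0$ for all $t\in[0,T]$. Now fix $x\in\Omega$. Since $u_1(0,\cdot)u_2(0,\cdot)\equiv 0$ and $w_i(0,x)=u_i(0,x)$, at least one of $w_1(0,x),w_2(0,x)$ vanishes; by the previous remark the corresponding $w_i(t,x)$ vanishes for every $t$, whence $w_1(t,x)\,w_2(t,x)=0$ for all $t$ and all $x\in\Omega$. Finally, by Lemma \ref{lem2.7} the map $x\mapsto\Pi(t,0;x)$ is a bijection of $\Omega$ onto itself, so for arbitrary $y\in\Omega$ I may write $y=\Pi(t,0;x)$ and conclude $u_1(t,y)u_2(t,y)=w_1(t,x)w_2(t,x)=0$, which is the claim.

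The main obstacle is not the idea but its justification: one must verify that the solution along the characteristics genuinely satisfies the pointwise ODE displayed above (this is the content deferred to Appendix \ref{app:existence} and already invoked in the proof of Theorem \ref{thm:conservation}), and that $g_i$ is integrable in $t$ so the exponential formula is legitimate. The latter follows from the regularity $P\in C([0,T];C^1(\overline{\Omega}))$ together with $u_i(t,\cdot)\in W^{1,\infty}(\Omega)$ granted by Theorem \ref{thm:existence} and Remark \ref{rem:regularityofP}. I would also flag explicitly that the argument collapses when $d_1\neq d_2$, since then $u_1$ and $u_2$ ride on two distinct flows and their transported supports may overlap, which is precisely why the segregation statement is restricted to the case $d_1=d_2=d$.
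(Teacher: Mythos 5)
Your proposal is correct and follows essentially the same route as the paper: both rest on the exponential representation of $u_i$ along the common characteristic flow (the only place $d_1=d_2=d$ enters) together with the bijectivity of $x\mapsto\Pi(t,0;x)$, the paper merely phrasing the argument contrapositively (assuming $u_1(t^*,x^*)u_2(t^*,x^*)>0$ and pulling back to a contradiction at $t=0$) where you argue forward via the invariance of each zero set. The reorganization is cosmetic, not a different method.
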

	\begin{proof}
		We argue by contradiction, assume there exist $ t^*>0, x^*\in \Omega$ such that \[  u_1(t^*,x^*)u_2(t^*,x^*)>0. \] 
		Suppose the characteristic flow satisfies the following equation
		\begin{equation*}
			\begin{cases}
				\frac{\partial\phantom{t}}{\partial t}\Pi(t,s;x)=-d\,\nabla P(t,\Pi(t,s;x))\\
				\Pi(s,s;x)=x\in \Omega.
			\end{cases}
		\end{equation*}		
		Since $ x\to \Pi(t,s;x) $ is invertible from $ \Omega $ to itself, there exists some $ x_0\in \Omega $ such that $ \Pi(t^*,0;x_0)=x^* $. Then  for any $ i=1,2, $
		we have
		\begin{equation}\label{eq2.12}
			\quad u_i(t^*,\Pi(t^*,0;x_0))
			= u_i\left(0,x_0\right)e^{\int_{0}^{t^*}h_i\left((u_1,u_2)(l,\Pi(l,0;x_0))\right)+\frac{d}{\chi} \left(P(l,\Pi(l,0;x_0))-(u_1+u_2)(l,\Pi(l,0;x_0)) \right)\dd l}>0,
		\end{equation}
		which implies 
		\[ u_i\left(0, x_0\right)>0,\quad i=1,2. \]
		This is a contradiction.
	\end{proof}
	For the one dimensional case $ N=1 $, suppose $ u_1,u_2 $ are solutions to \eqref{eq2.1}-\eqref{eq2.2}, we give an illustration (see Figure \ref{Figure3}) of the segregation for the solutions integrated along the characteristics $ u_i(t,\Pi(t,0;x)) $ for $ i=1,2 $. In fact, if there exists for some $ x_0 $ such that $ u_i(0,x_0)=0 $ for $ i=1,2.$ Then from Equation \eqref{eq2.12} we obtain 
	\[ u_1(t,\Pi(t,0;x_0))= u_2(t,\Pi(t,0;x_0))=0,\,\forall t>0. \]
	Therefore, the characteristics $t\mapsto \Pi(t,0;x_0) $ forms a segregation barrier for the two cell populations.
	
	\begin{figure}[H]
		\begin{center}
			\includegraphics[width=0.45\textwidth]{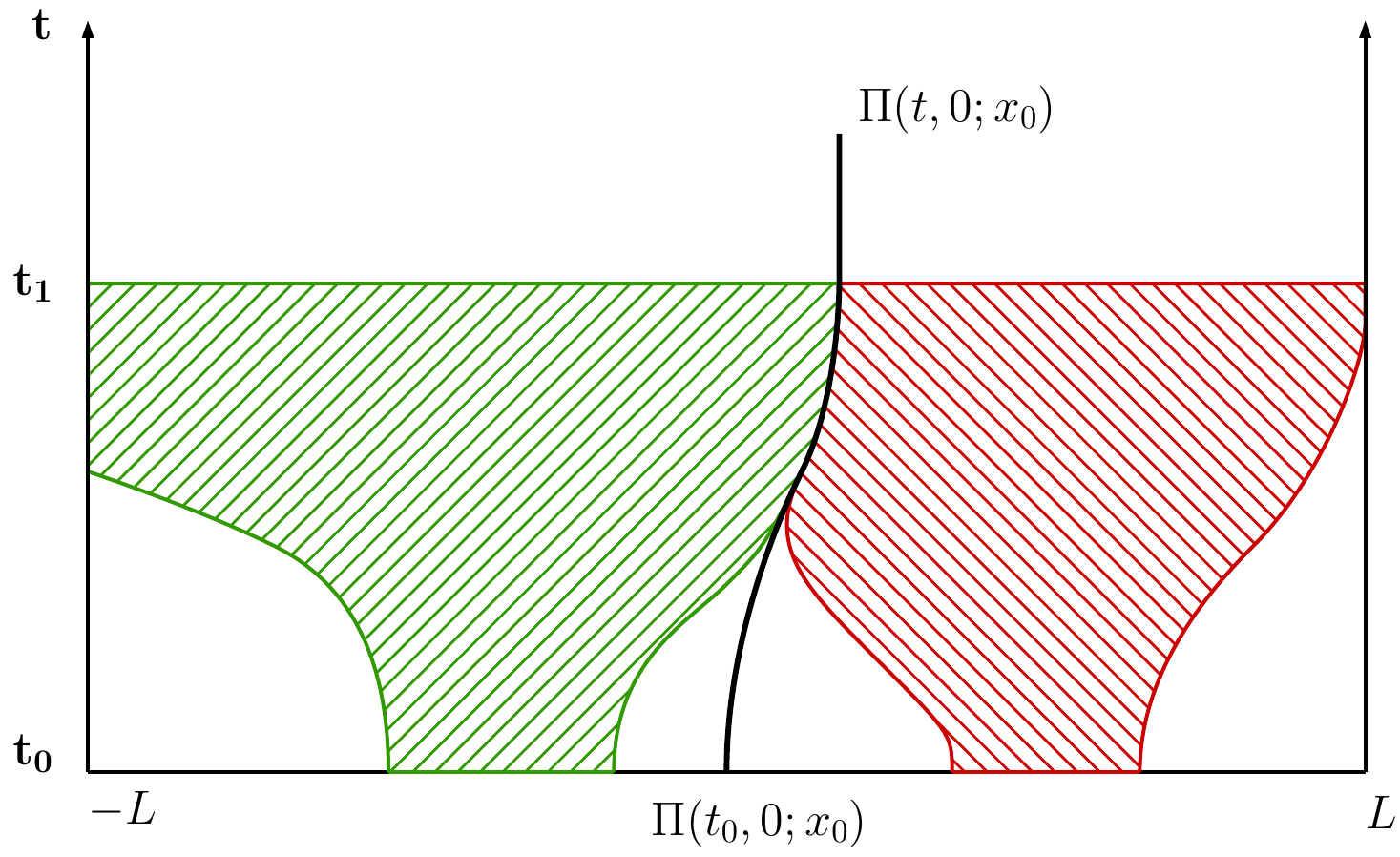}\,\,\includegraphics[width=0.45\textwidth]{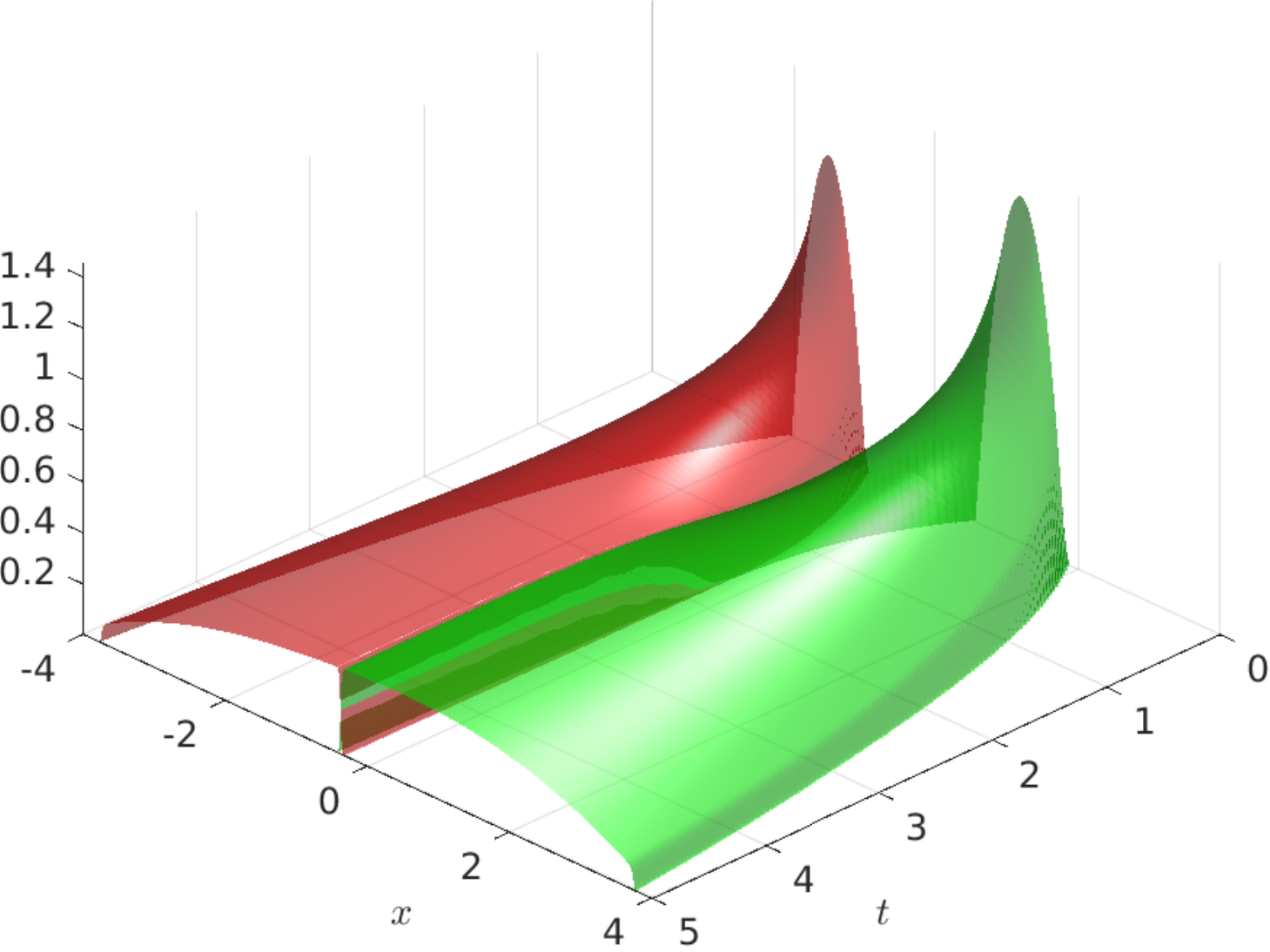}
		\end{center}
		\caption{\textit{In this figure we illustrate the notion of segregation with a one dimensional bounded domain. Figure (a) shows the characteristic $t\mapsto \Pi(t,0;x_0) $ forms a segregation ``wall". Figure (b) shows the temporal-spatial evolution of the two species.}}
		\label{Figure3}
	\end{figure}
	\subsubsection{Conservation law on a volume}
	If we assume that $ d_1=d_2=d $ in system \eqref{eq2.1}, we have the following similar conservation law for  two species case. Suppose volume $A\subset \Omega $ and each $0\leq s\leq t$: 
	\begin{equation*}
		\int_{\Pi(t,s;A)}u_i(t,x)\dd x=\int_A \exp\left[\int_s^t
		h_i\left((u_1,u_2)\left(l,\Pi(l,s;z)\right)\right)\dd  l\right]u_i(s,z)\dd  z,i=1,2.
	\end{equation*}
	Therefore, if we have $ h_i=0 $ for any  $0\leq s\leq t$
	\begin{equation*}
		\int_{\Pi(t,s;A)}u_i(t,x)\dd x=\int_A u_i(s,z)\dd  z,i=1,2.
	\end{equation*}
	This means the total cell number of the species $u_i$ is constant along the characteristics starting from the volume $ A $. 
	\section{Numerical simulations}\label{Sect.3}
	\subsection{Impact of the segregation on the competitive exclusion}
	We set $ U_i $ to be the total number at time $t= 0 $ 
	\begin{equation}\label{eq:Ui}
	 U_i=\int_{\Omega} u_i(0,x)\dd x,\quad i=1,2.
	\end{equation}
	We give the parameter values used in the simulations and their interpretations in Table \ref{TABLE1}. The parameter fitting for the growth rate $ b_i $ and the intraspecific coefficients $ a_{ii} $ are detailed in Appendix \ref{App:para}.
	\begin{table}[H] \centering
		\begin{tabular}{cccccc}
			\doubleRule
			\textbf{Symbol}  &  \textbf{Interpretation} & \textbf{Value}  &\textbf{Unit} &  \textbf{Method} &  \textbf{Dimensionless value} \\
			\hline
			$ t $  
			& time
			& 1 
			& $ day $
			& -  
			& 1\\
			$ r $  
			& inner radius of the dish 
			& 2.62
			& $ cm $
			& \cite{Pasquier2011}
			& 1\\
			$  U_i $  
			& cell total number at $t=0$
			& $ 10^5 $
			& $- $
			& \cite{Pasquier2011}
			& 0.01 \\
			$ b_1 $  
			& growth rate of cell $ u_1 $ 
			& 0.6420 
			& $day^{-1} $
			& fitted 
			& 0.6420 \\
			$ b_2 $  
			& growth rate of cell $ u_2 $
			& 0.6359 
			& $day^{-1} $
			& fitted  
			& 0.6359\\
			$ a_{11} $  
			& intraspecific competition of $ u_1 $
			& $ 1.07\times 10^{-6} $ 
			& $ cm^2/day$
			& fitted 
			& $ 1.5588 $ \\
			$ a_{22} $  
			& intraspecific competition of $ u_2 $
			& $ 1.06\times 10^{-6}$
			& $ cm^2/day$
			& fitted  
			& $ 1.5415 $ \\	
			$ d_1 $  
			& dispersion coefficient of $ u_1 $
			& 13.73 
			& $ cm^4/day$
			& fitted 
			& 2  \\
			$ d_2 $  
			& dispersion coefficient of $ u_2 $
			& 13.73  
			& $ cm^4 /day$
			& fitted 
			& 2  \\
			$ \chi $  
			& sensing coefficient 
			& $ 6.86\times 10^{-2} $
			& $ cm^2$
			& fitted 
			& $ 0.01 $  \\		
			\doubleRule
		\end{tabular} 
		\caption{\textit{List of the model parameters, their interpretations, values and symbols. Here $ u_1 $ represents MCF-7 (sensitive cell) and $ u_2 $ represents MCF-7/Doxo (resistant cell). From \cite{Pasquier2011}, the surface of the dish is $21.5\, cm^2   $. Thus the inner radius of the dish $ r $ is calculated by $ r^2\pi =21.5\, cm^2 $. }}\label{TABLE1}
	\end{table}
	
	The goal of our simulations is to compare the various cases discussed in Proposition \ref{prop:ode} (ODE case) with our PDE model with segregation. As we will see in the numerical simulations, the model with spatial structure can present totally different results compared to the previous ODE model.  
	To that aim, we firstly consider the case where the drug (doxorubicine) concentration is low in the cell co-culture for MCF-7 and MCF-7/Doxo. The drug treatment causes  an additional mortality to the sensitive population MCF-7 represented by $ u_1 $ while no extra mortality to the resistant population MCF-7/Doxo represented by $ u_2 $  (MCF-7/Doxo is resistant to a small quantity of drug treatment see Table \ref{TABLE3} in Appendix \ref{App:para}).  
	
	Now since we consider the presence of the drug, the equilibrium \eqref{eq:equilibrium} in the ODE case  should be rewritten as
	\begin{equation}\label{eq:newequilibrium}
		\bar{P}_1 = \frac{b_1-\delta_1}{a_{11}},\quad \bar{P}_2 = \frac{b_2-\delta_2}{a_{22}}.
	\end{equation}
	Moreover we assume  the drug concentration is low such that  $ b_1-\delta_1>0 $ and $ \delta_2=0 $, therefore we have
	\[ \bar{P}_1<\bar{P}_2. \]
	The case when $ \bar{P}_1>\bar{P}_2 $ is similar and will be discussed in the end of this section.\medskip\\
	Case (i): $ a_{12}/a_{11}  < \bar{P}_1/\bar{P}_2,\, a_{21}/a_{22}< \bar{P}_2/\bar{P}_1.$ By using \eqref{eq:newequilibrium}, the condition in Case (i) can be interpreted by
	\[  \frac{a_{12}}{a_{22}} < \frac{b_1-\delta_1}{b_2-\delta_2},\quad \frac{a_{21}}{a_{11}} < \frac{b_2-\delta_2}{b_1-\delta_1}. \]
	Since we have $ b_1-\delta_1>0 $ and $ \delta_2=0 $, if the coefficients $ a_{12} $ and $ a_{21} $ are small, then Case (i)  holds.	We give a possible set of parameters satisfying Case (i) :
	\begin{equation}\label{eq3.2}
		\delta_1 =0.4,\,\delta_2=0,\,a_{12}=0.2,\,a_{21} =1.
	\end{equation}
	We assume that for each species $ u_i $, the initial distribution follows the uniform distribution on a disk with $ 20 $ initial cell clusters (represented by the red/green dots in Figure \ref{Figure_case1} (a)). The initial total cell number is $U_i= 0.01 $ in \eqref{eq:Ui} for each species  and we assume each cluster contains the same quantity of cells.
	We present its numerical simulation in Figure \ref{Figure_case1} from day $ 0 $ to day $ 6 $. We also plot the relative cell numbers in Figure \ref{Figure_case1} (f) where we define the relative cell number for species \textit{i} as 
	\[  \frac{U_i(t)}{U_1(t)+U_2(t)},\quad \text{ where }  U_i(t):= \int_{\Omega}u_i(t,x)\dd x,\, i=1,2. \]
	\begin{figure}[H]
		\begin{center}
			\includegraphics[width=0.33\textwidth]{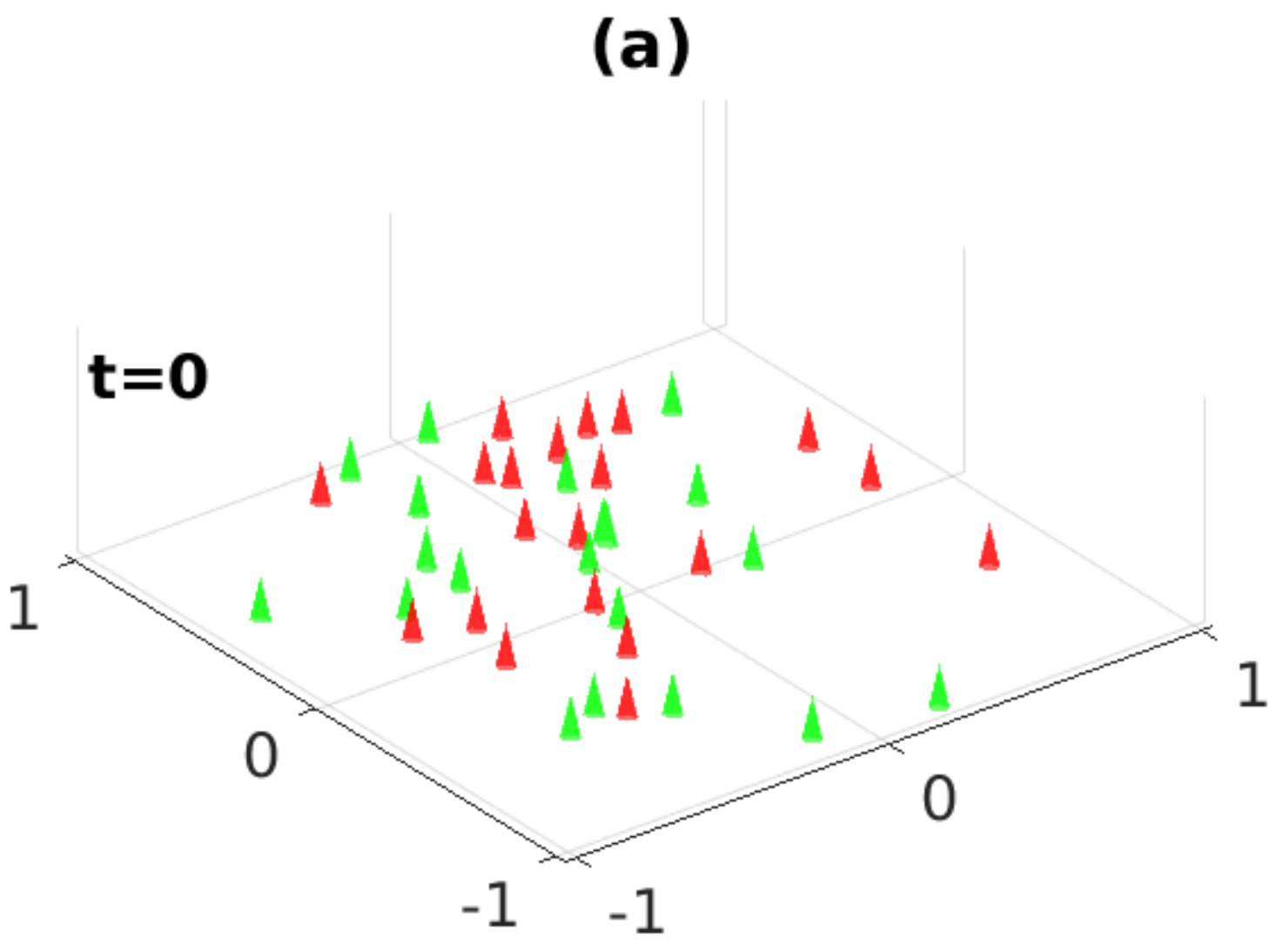}\includegraphics[width=0.33\textwidth]{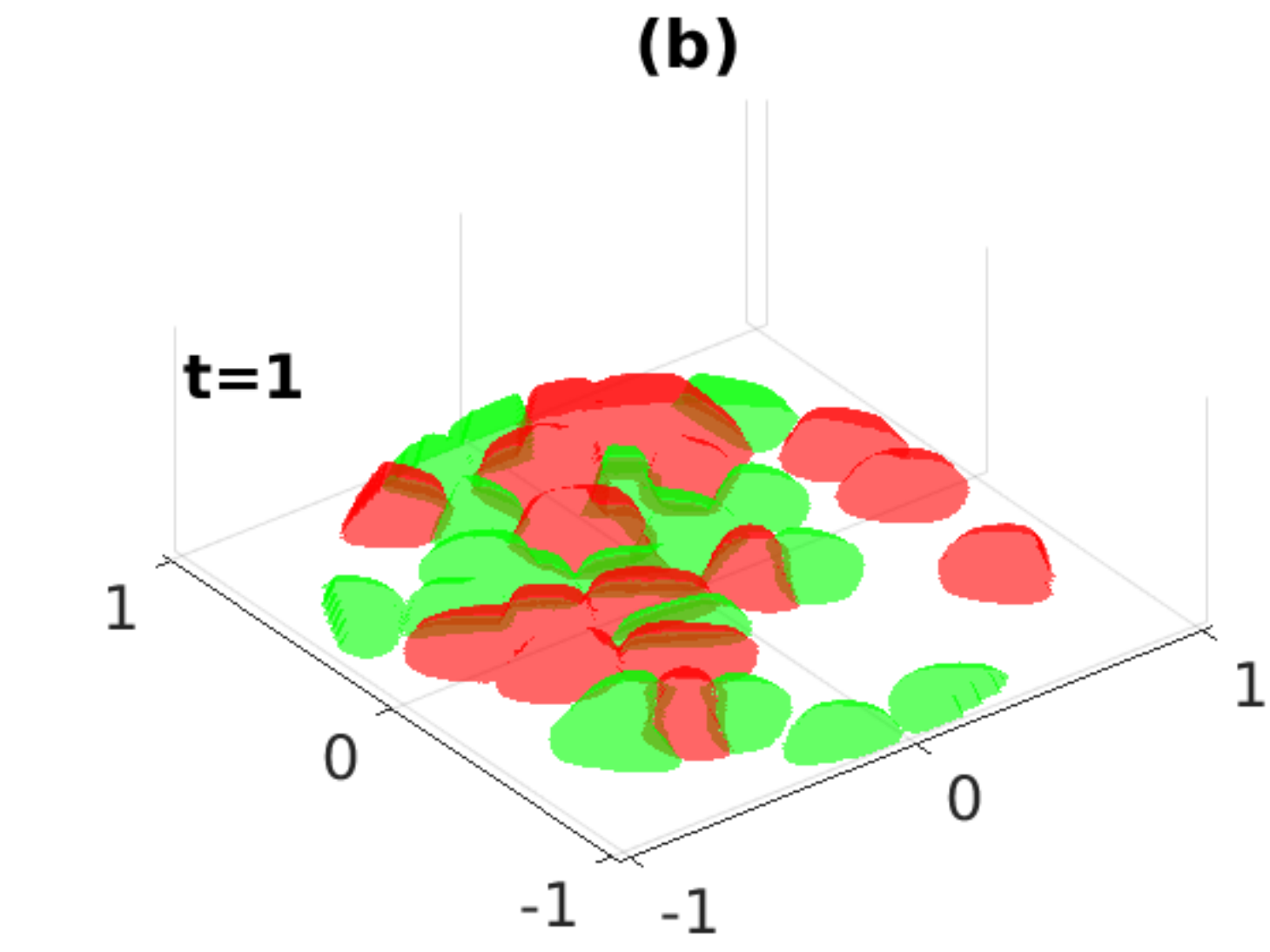}\includegraphics[width=0.33\textwidth]{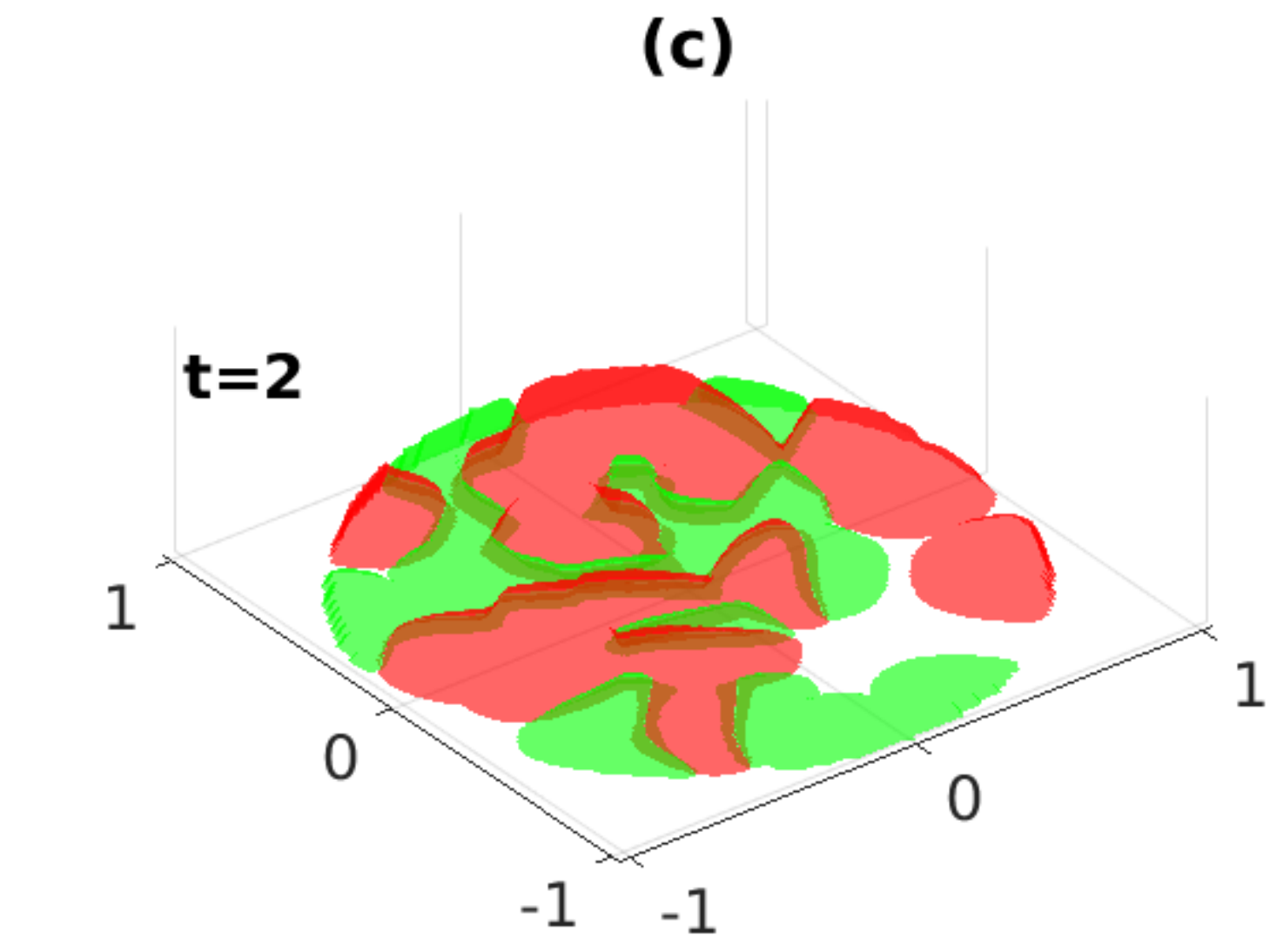}
			\includegraphics[width=0.33\textwidth]{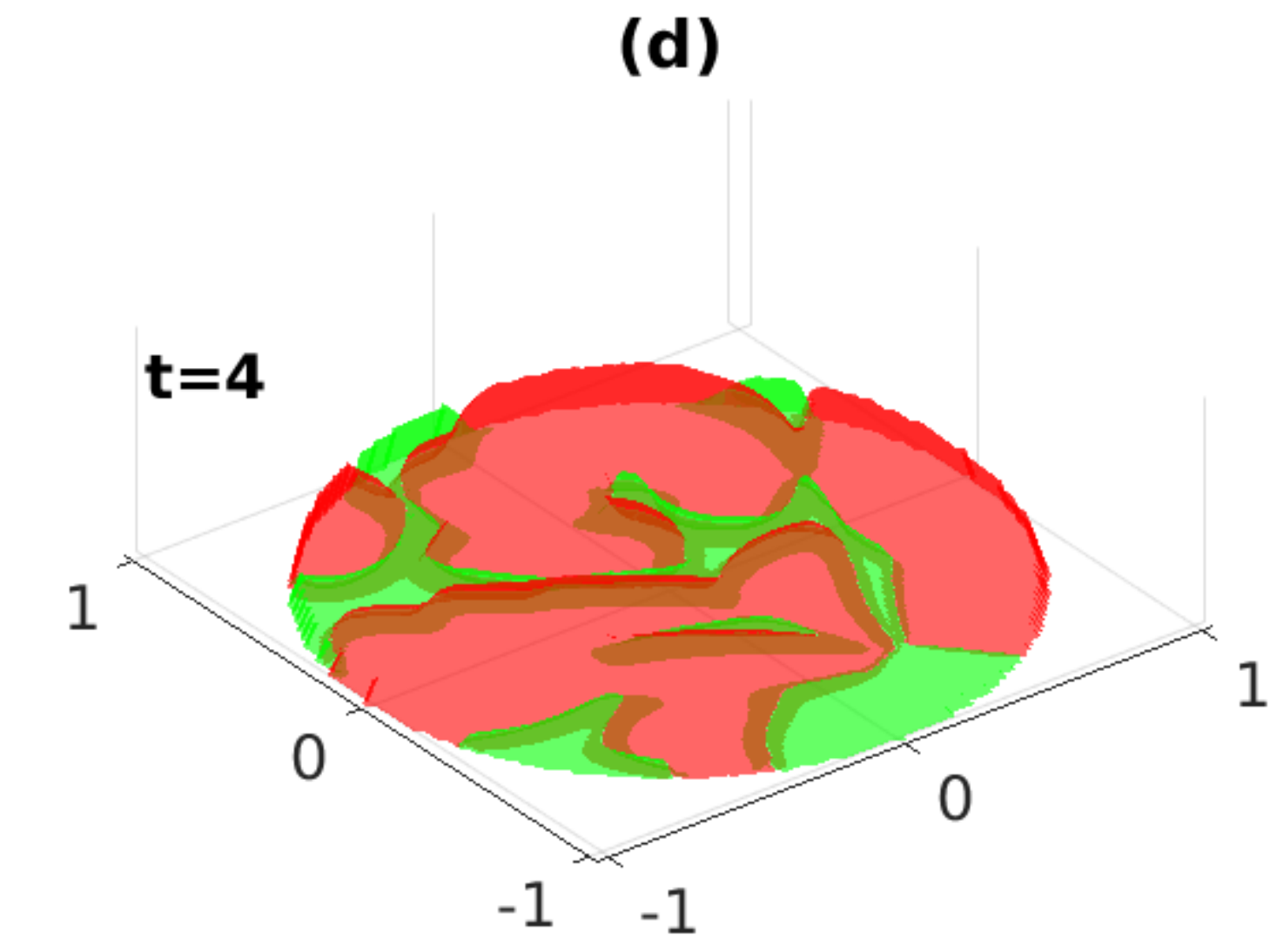}\includegraphics[width=0.33\textwidth]{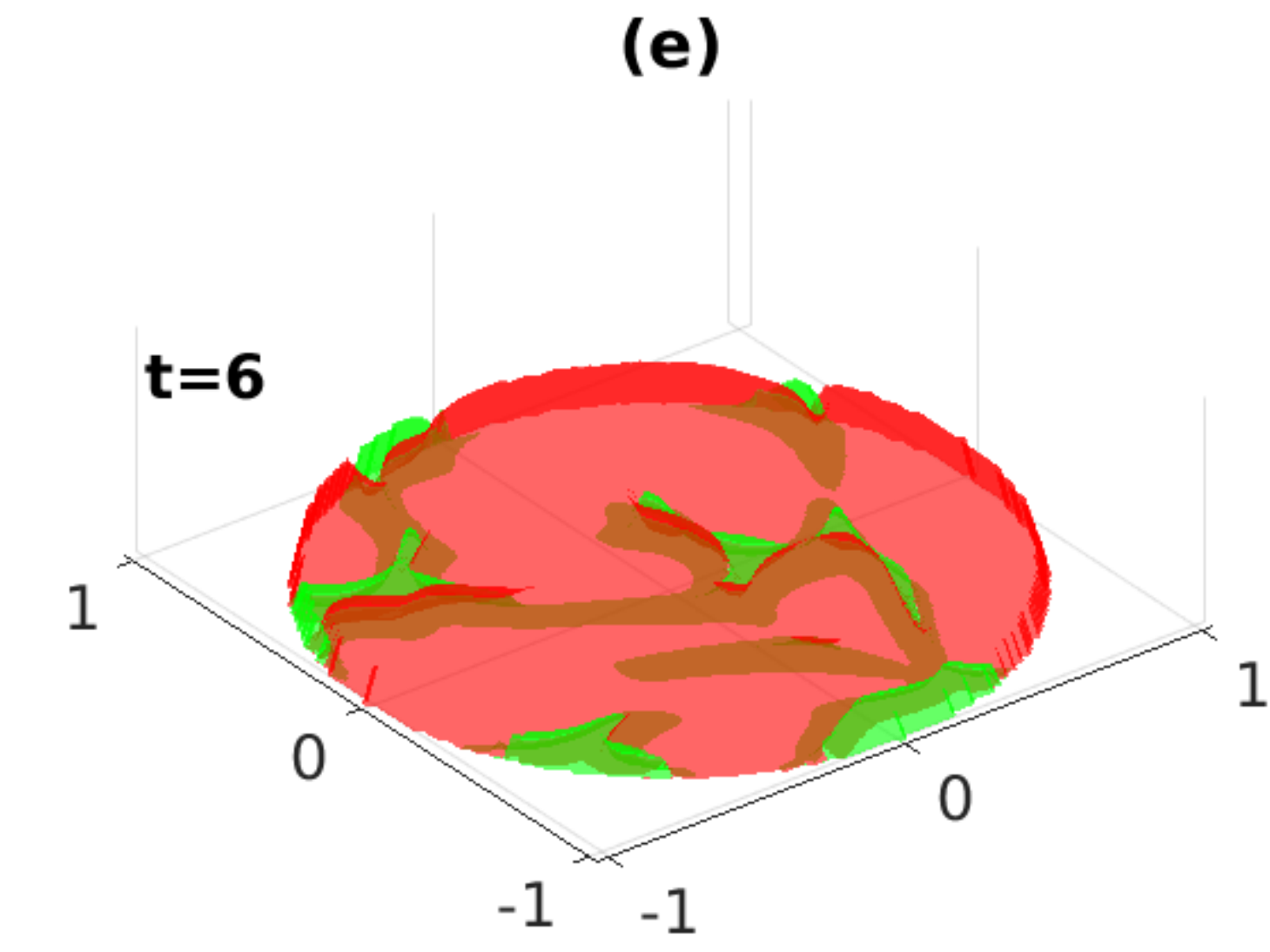}\includegraphics[width=0.33\textwidth]{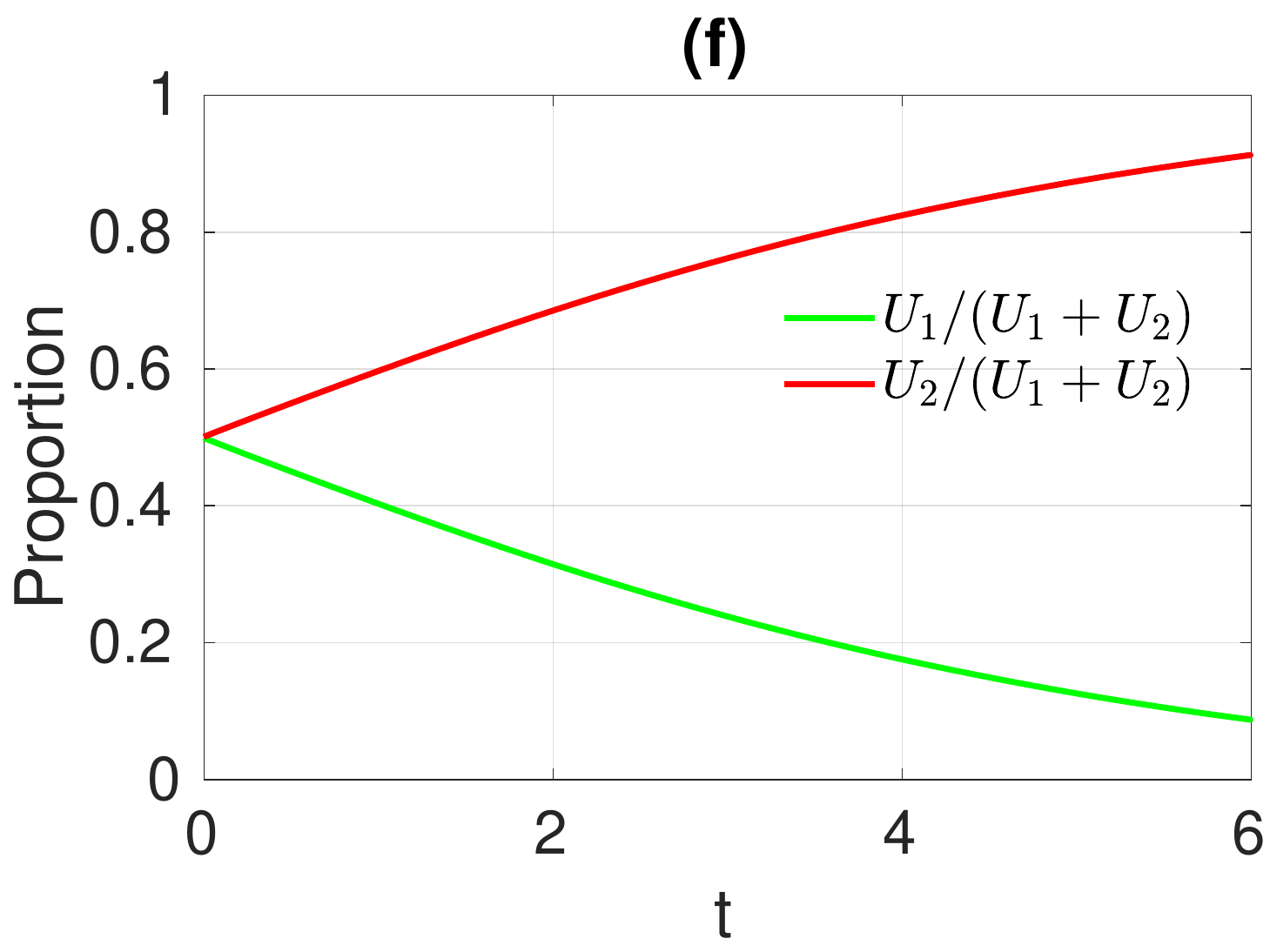}
		\end{center}
		\caption{\textit{Spatial-temporal evolution of the two species $ u_1 $ and $ u_2 $ and its relative proportion. Figures (a)-(e) correspond to the evolution of cell growth form day $ 0 $ to day $ 6 $ and Figure (f) is the relative proportion plot from day $ 0 $ to day $ 6 $. We fix the parameters $\delta_1 =0.4,\,\delta_2=0,\,a_{12}=0.2,\,a_{21} =1$ in \eqref{eq3.2}. The initial distribution follows the uniform distribution on a disk with $ 20 $ initial cell clusters. The initial total cell number is $U_1=U_2= 0.01 $ for each species and cells are equally distributed in each cluster. The other parameters are given in Table \ref{TABLE1}.}}
		\label{Figure_case1}
	\end{figure}
	In Case (i) of the ODE system \eqref{eq:odemodel}, Proposition \ref{prop:ode} shows that the two species can coexist with the equilibrium
	\[ \bar{E}^* :=\left(\dfrac{a_{22} (b_1-\delta_1)-a_{12} (b_2-\delta_2)}{a_{11} a_{22}-a_{12} a_{21}}, \dfrac{a_{21} (b_1-\delta_1)-a_{11} (b_2-\delta_2)}{a_{12} a_{21}-a_{11} a_{22}}\right)\approx (0.11,0.34).  \]
	However, as shown in Figure \ref{Figure_case1}, we can see the population density $ u_1 $ tends to 0 and $ u_2 $ tends to 1.  
	Next, we consider the Cases (ii)-(iv) in Proposition \ref{prop:ode} by choosing the parameters in each case as follows.
	\begin{table}[H]\centering
		\begin{tabular}{cccccc}
			\doubleRule
			\textbf{Parameters} & $ \delta_1 $  & $ \delta_2 $ & $ a_{12} $ & $  a_{21}  $ & Relations \\ \hline
			\textbf{Case (ii)}  & 0.4 & 0 & 1  & 1 & $ a_{12}/a_{11}  > \bar{P}_1/\bar{P}_2,\, a_{21}/a_{22}  < \bar{P}_2/\bar{P}_1.$\\ 
			\textbf{Case (iii)} & 0.4 & 0   & 0.2  & 5 & $ a_{12}/a_{11}  < \bar{P}_1/\bar{P}_2,\, a_{21}/a_{22}  > \bar{P}_2/\bar{P}_1.$\\
			\textbf{Case (iv)}  & 0.4 & 0   & 1  & 5 & $ a_{12}/a_{11}  > \bar{P}_1/\bar{P}_2,\, a_{21}/a_{22}  > \bar{P}_2/\bar{P}_1.$\\ 
			\doubleRule
		\end{tabular}
		\caption{\textit{List of the parameters used in the simulations for Cases (ii)-(iv). Other parameters are given in Table \ref{TABLE1}.}}\label{TABLE2}
	\end{table}
	\begin{figure}[H]
		\begin{center}
			\includegraphics[width=0.33\textwidth]{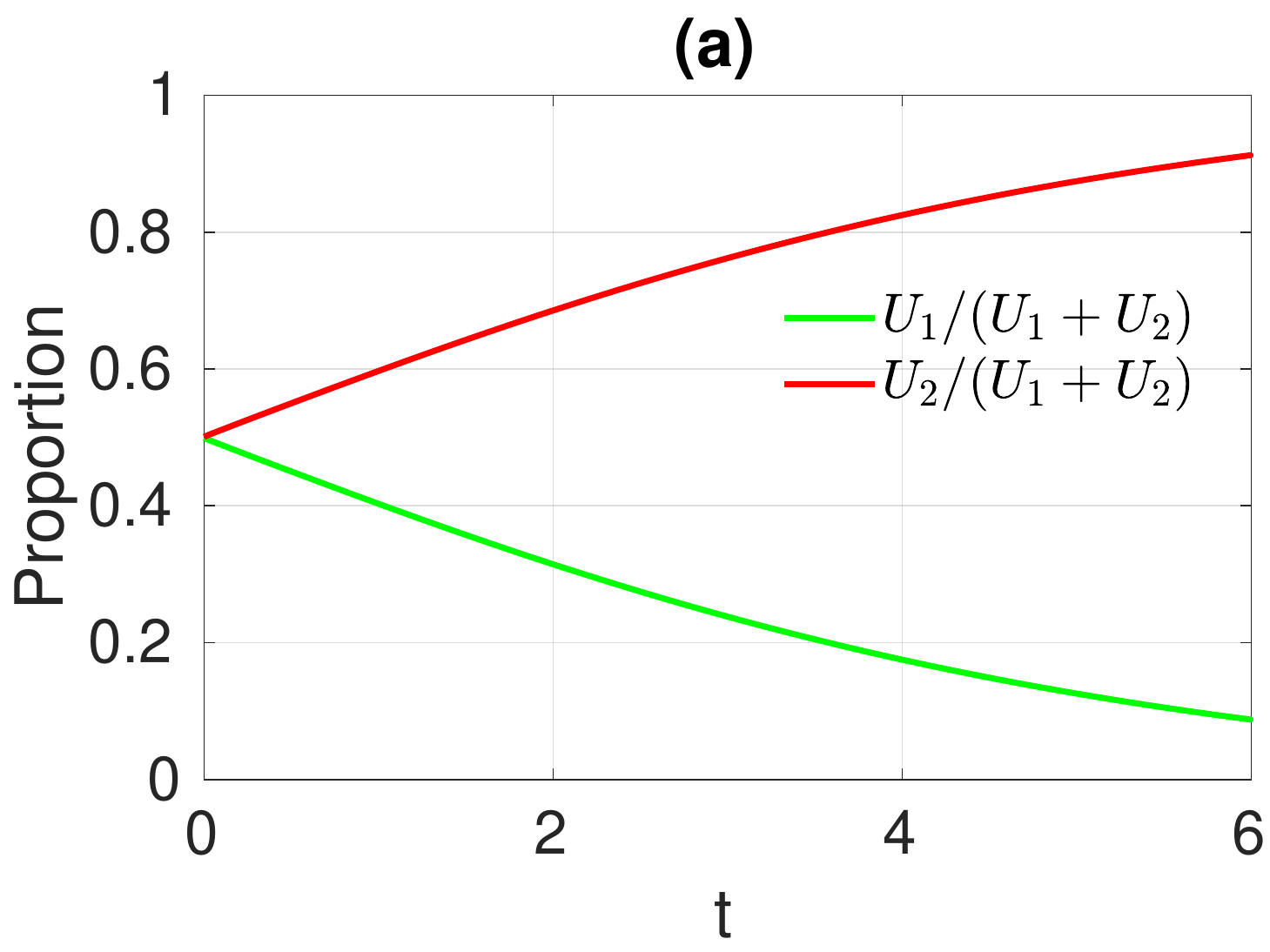}\includegraphics[width=0.33\textwidth]{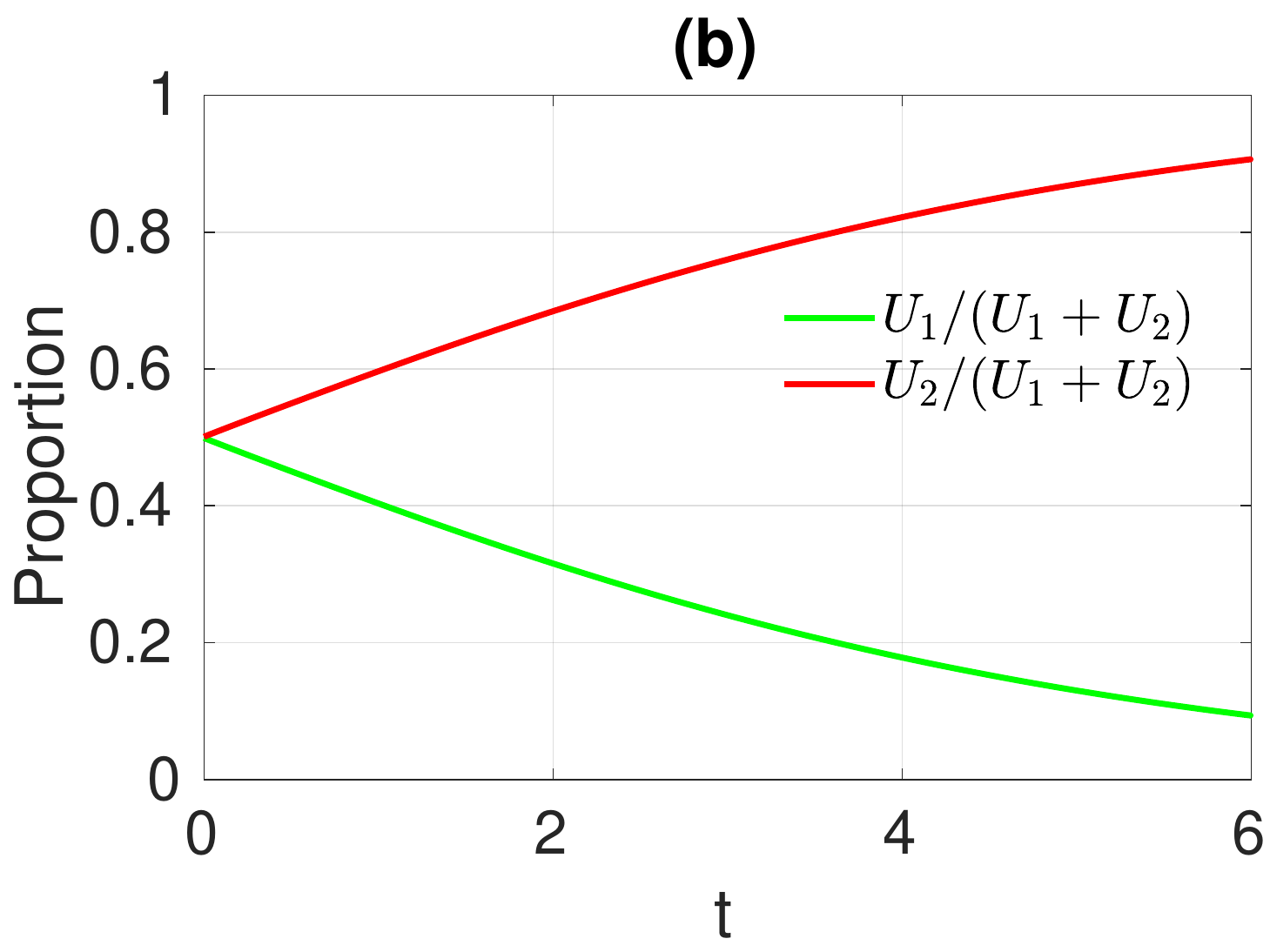}\includegraphics[width=0.33\textwidth]{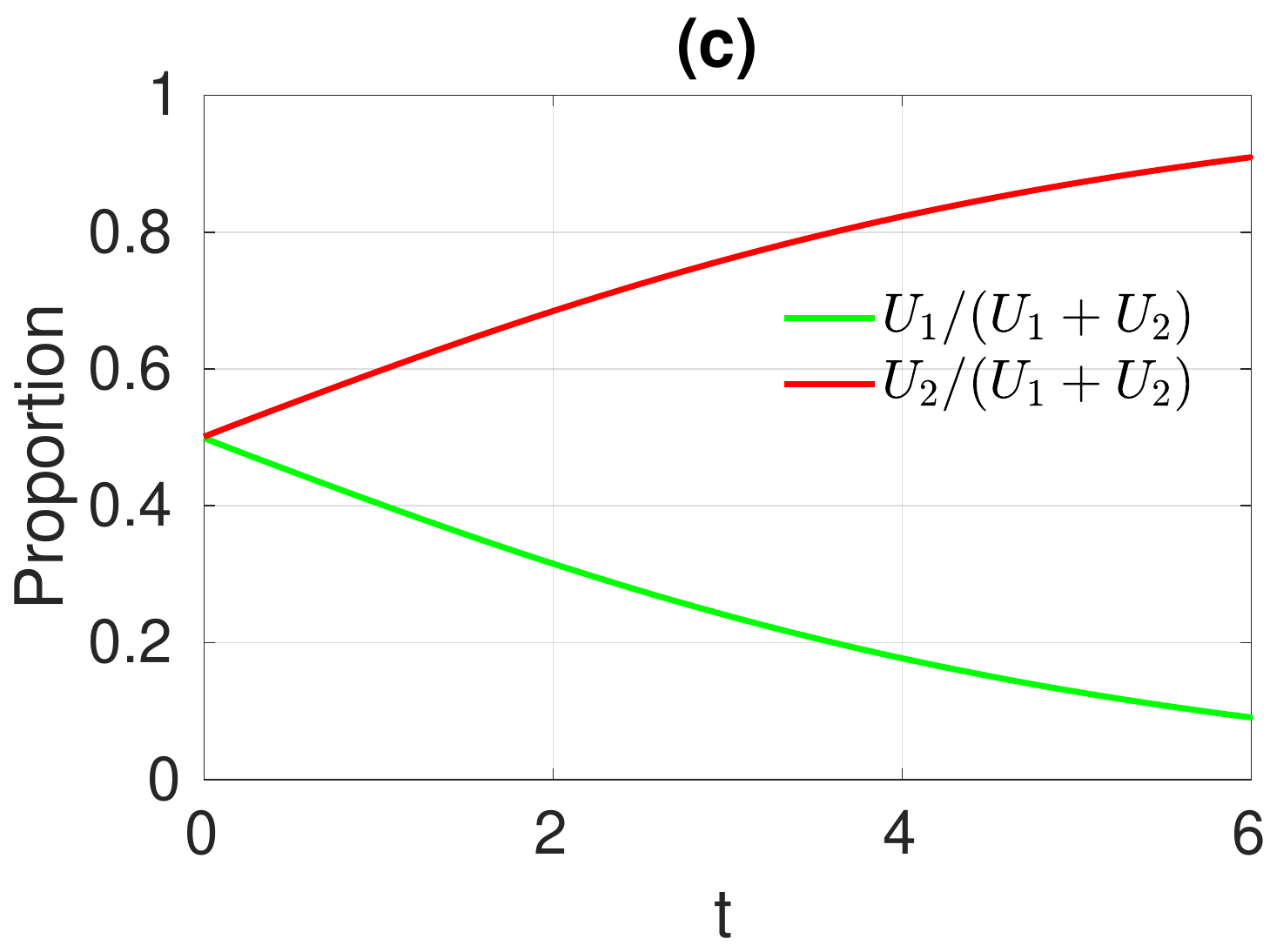}
		\end{center}
		\caption{\textit{Evolution of the relative cell numbers for two species $ u_1 $ and $ u_2 $. Figure (a)-(c) correspond to the parameter values chosen as in Table \ref{TABLE2} for Cases (ii)-(iv) and other parameters are given in Table \ref{TABLE1}.}}
		\label{Figure_case234}
	\end{figure}
	With the simulations in Figure \ref{Figure_case1}, Figure \ref{Figure_case234} and the results in Proposition \ref{prop:ode}, by setting
	\[ \bar{E}_1 =(\bar{P}_1,0)\quad \bar{E}_2 =(0,\bar{P}_2)\quad \bar{E}^*=\left(\dfrac{a_{22} (b_1-\delta_1)-a_{12} (b_2-\delta_2)}{a_{11} a_{22}-a_{12} a_{21}}, \dfrac{a_{21} (b_1-\delta_1)-a_{11} (b_2-\delta_2)}{a_{12} a_{21}-a_{11} a_{22}}\right),    \]
	we can compare the stability between ODE model \eqref{eq:odemodel} and PDE model \eqref{eq2.1} under four different cases. 
	\begin{table}[H]\centering
		\begin{tabular}{cccccc}
			\doubleRule
			$ \bar{P}_1<\bar{P}_2 $ & Case (i)  & Case (ii) & Case (iii) & Case (iv) \\ \hline
			\textbf{Global attractor in ODE}  & Coexistence $ \bar{E}^* $ & $ \bar{E}_2 $ & $ \bar{E}_1 $  & Region dependent \\ 
			\textbf{Stable steady state in PDE} & $ \bar{E}_2 $ & $ \bar{E}_2 $   & $ \bar{E}_2 $  & $ \bar{E}_2 $  \\ 
			\doubleRule
		\end{tabular}
		\caption{\textit{A summary for the stability to four cases (i)-(iv) under ODE model and PDE model with segregation.}}\label{TABLE4}
	\end{table}
	The numerical simulation strongly indicates that the stable steady states only depend on the relation between $ \bar{P}_1 $ and $ \bar{P}_2 $. If $ \bar{P}_1<\bar{P}_2 $ (resp. $ \bar{P}_1>\bar{P}_2 $), the population $ u_2 $ (resp. $ u_1 $) will dominate and the other species will die out. We also did the four cases when $ \bar{P}_1>\bar{P}_2 $, the results showed that $ \bar{E}_1 $ is the only stable steady state, which verifies our conjecture. Since the results are similar we omit the numerical simulations. \medskip

	One can notice that unlike the ODE system \eqref{eq:odemodel}, the segregation property for the PDE model implies that it is impossible for the two species to coexist at a same position $ x\in \Omega $. Moreover, through the numerical simulations we observed that the PDE model \eqref{eq2.1} always undergoes a competitive exclusion principle, unless the equilibrium $ \bar{P}_1=\bar{P}_2 $ in \eqref{eq:newequilibrium}. 
	\subsection{Impact of the initial distribution on the population ratio}\label{Sect.3.2}
	In the previous section, we considered the competitive exclusion principle for the two species.  By studying the relative proportions of $u_1$ and $u_2$, we presented the relation of the interspecific competition in our numerical simulation. Moreover, we can discover in Figure \ref{Figure_case1} (f) and in Figure \ref{Figure_case234} (a)-(c) that the increase of the proportion of the dominant population  $u_2$ (red curve) is varying with time.  It is evident to see from day 0 to day 2 the increase of the dominant population $u_2$ is faster than the increase from day 4 to day 6. If we further study the spatial-temporal evolution of the cell co-culture presented in Figure \ref{Figure_case1} (a)-(e), we can observe that from day 0 to day 2 the competition between the two groups is mainly expressed in the competition for space resources. However, from  day 4 to day 6, when the surface of the dish is almost fully occupied by cells, the reaction term $u_i h_i(u_1,u_2) $ in the equation begins to play a major role influencing the change in the number of cells.  In order to explore the major factors in cell competition, we consider the impact on the initial distribution. We will mainly focus on two factors, namely the  initial cell total number and the law of initial distribution, which might influence the proportions for $ u_1 $ and $ u_2 $. To that aim, we set the following parameters 
	\begin{equation}\label{eq:para_Table_1plus}
		\delta_1 =0.15,\,\delta_2=0,\,a_{12}=0,\,a_{21} =0.
	\end{equation}
	and the other parameters are given in Table \ref{TABLE1}.

	\subsubsection{Dependency on the initial total cell number}	
	In cell culture, the initial number of cell cluster is an important factor. Bailey et al. \cite{Bailey2018} study the sphere-forming efficiency of MCF-7 human breast cancer cell by comparing the cell culture with different initial numbers of the cell cluster.  Here we consider the impact of the initial cluster number on the final proportion of species. To that aim, we assume the initial distribution follows the uniform distribution on a disk. 
	We consider two sets of initial condition, that is
	\begin{equation}\label{eq:para_cluster}
		\begin{aligned}
			&U_1=U_2=0.005,\quad N_{u_1}=N_{u_2}=10,\\
			&U_1=U_2=0.1, \quad N_{u_1}=N_{u_2}=200, 
		\end{aligned}
	\end{equation}
	where $U_1$ and $U_2$ are defined in \eqref{eq:Ui} and  $N_{u_1}$ (respectively $N_{u_2}$) is the initial number of cell clusters of species $u_1$ (respectively species $u_2$). 
	
	The above initial conditions correspond to different types of seeding in the experiment, namely cells are sparsely seeded or densely seeded. We assume the total cell number is proportional to the initial number of cell cluster, meaning the dilution procedure adopted in the experiment is the same, thus the number of cells in each cell cluster is a constant.   
	
	In Figure \ref{FIG6.1}, we first give a numerical simulation for the cell growth with sparse seeding.
	\begin{figure}[H]
		\begin{center}
			\includegraphics[width=0.33\textwidth]{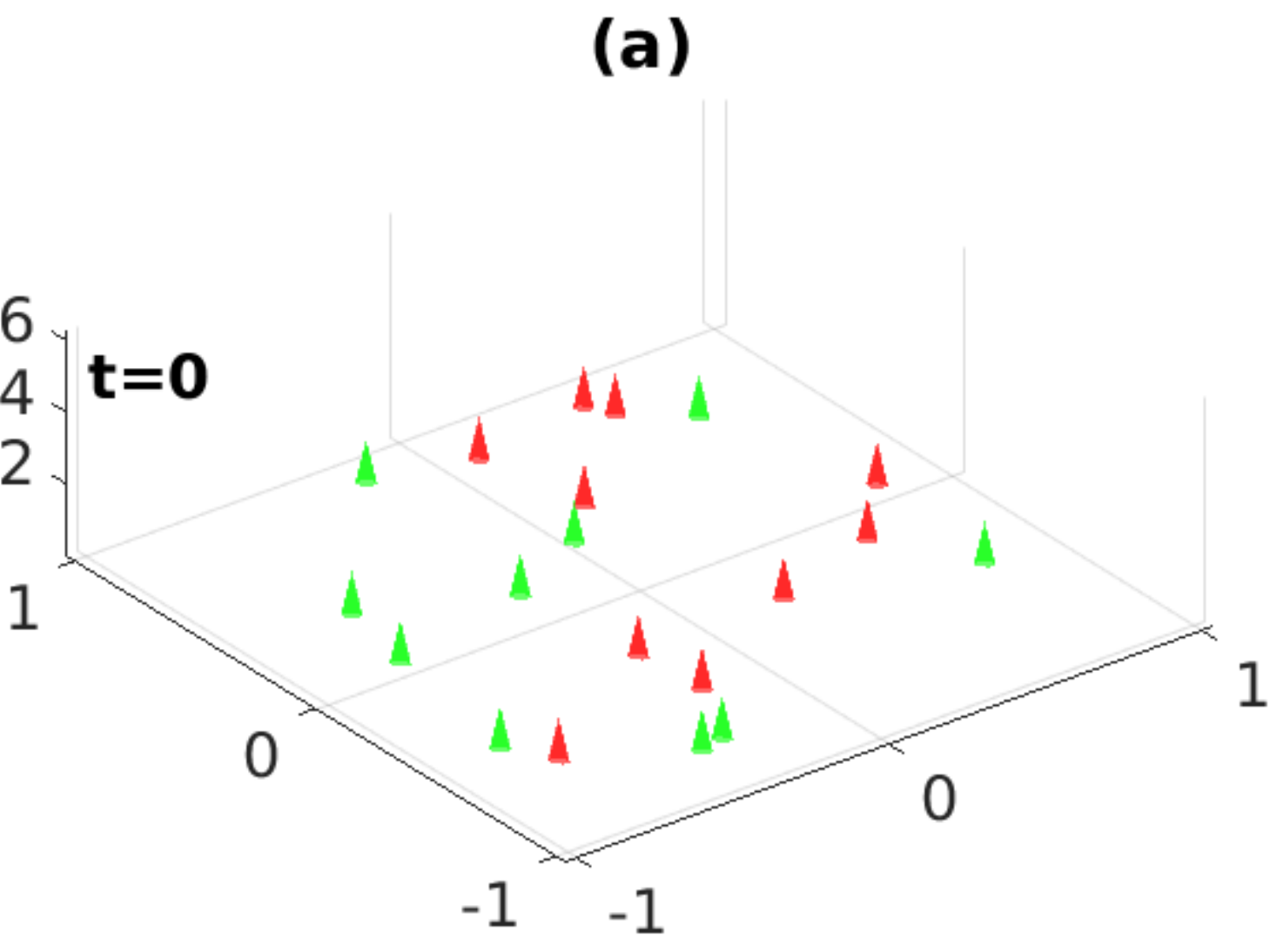}\includegraphics[width=0.33\textwidth]{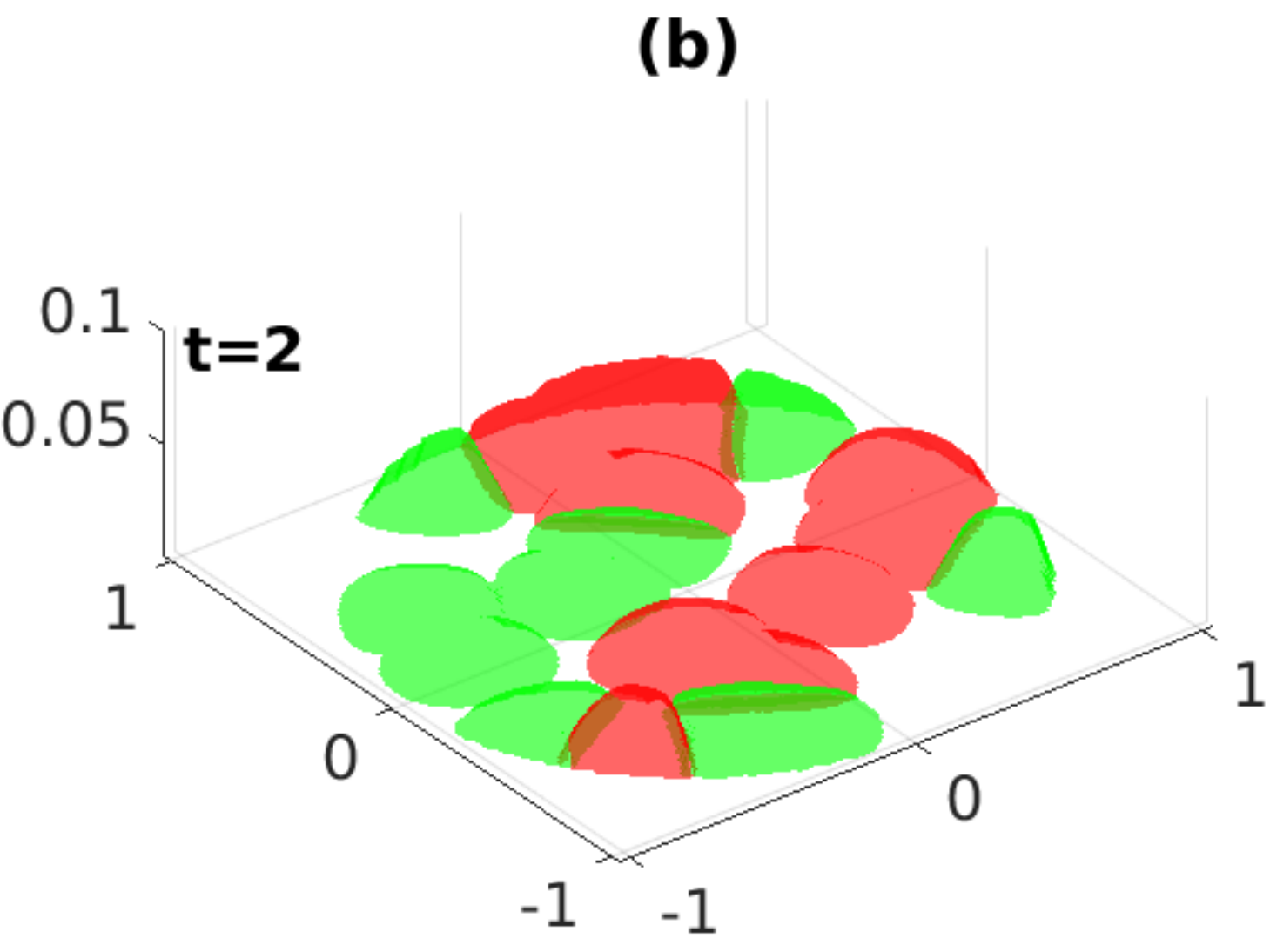}\includegraphics[width=0.33\textwidth]{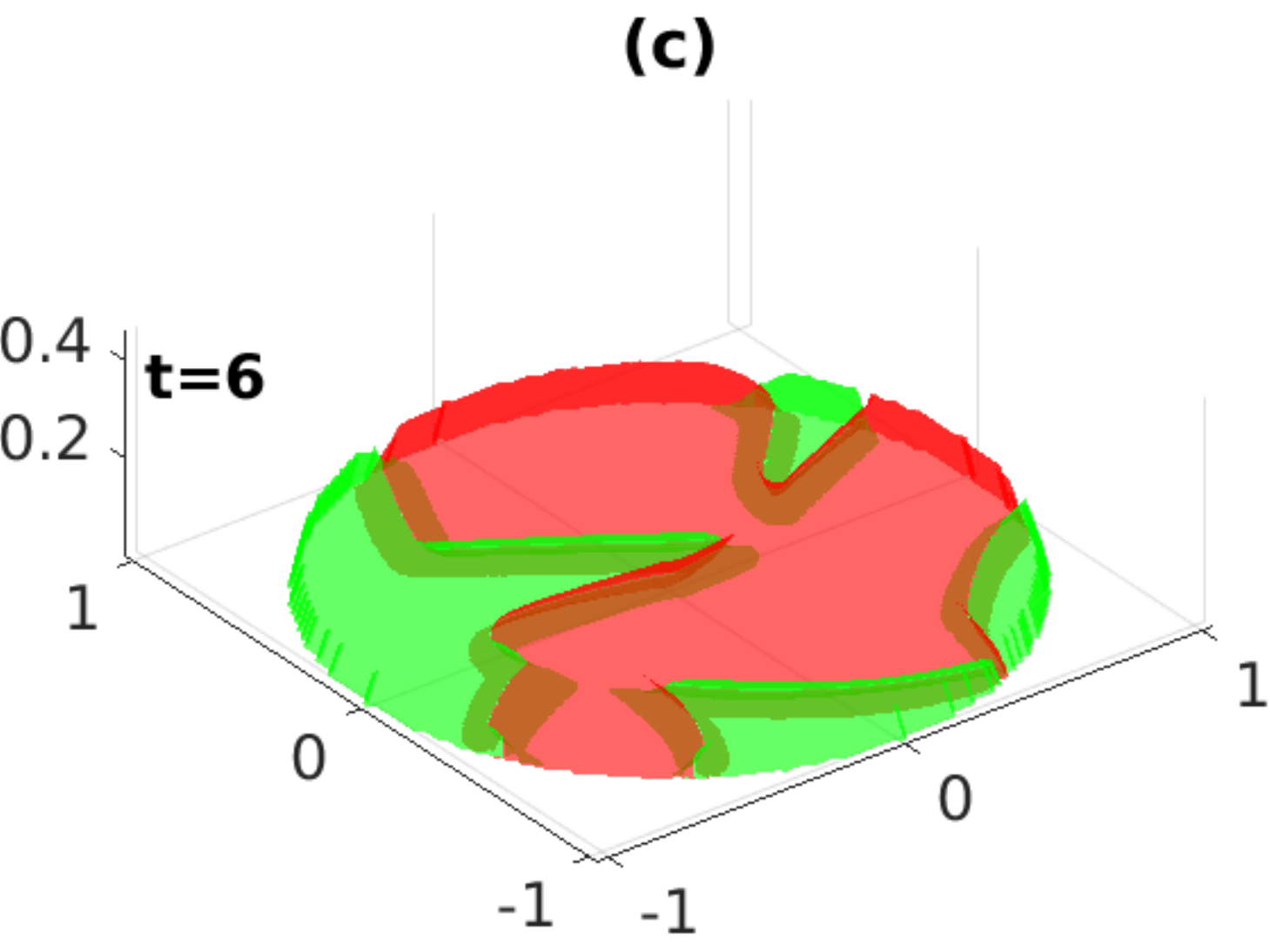}
		\end{center}
		\caption{\textit{Cell co-culture for species $ u_1 $ and $ u_2 $ over 6 days. We plot the case where cells are sparsely seeded, i.e., $U_1=U_2=0.005,\, N_{u_1}=N_{u_2}=10$ for day $ 0,2 $ and day $ 6 $. We set parameters as $\delta_1 =0.15,\,\delta_2=0,\,a_{12}=0,\,a_{21} =0$ in \eqref{eq:para_Table_1plus}. Other parameters are given in Table \ref{TABLE1}. }}
		\label{FIG6.1}
	\end{figure}
	In Figure \ref{FIG6.2}, we present the simulation under the dense seeding condition, tracking from day 0 to day 6.
	\begin{figure}[H]
		\begin{center}
			\includegraphics[width=0.33\textwidth]{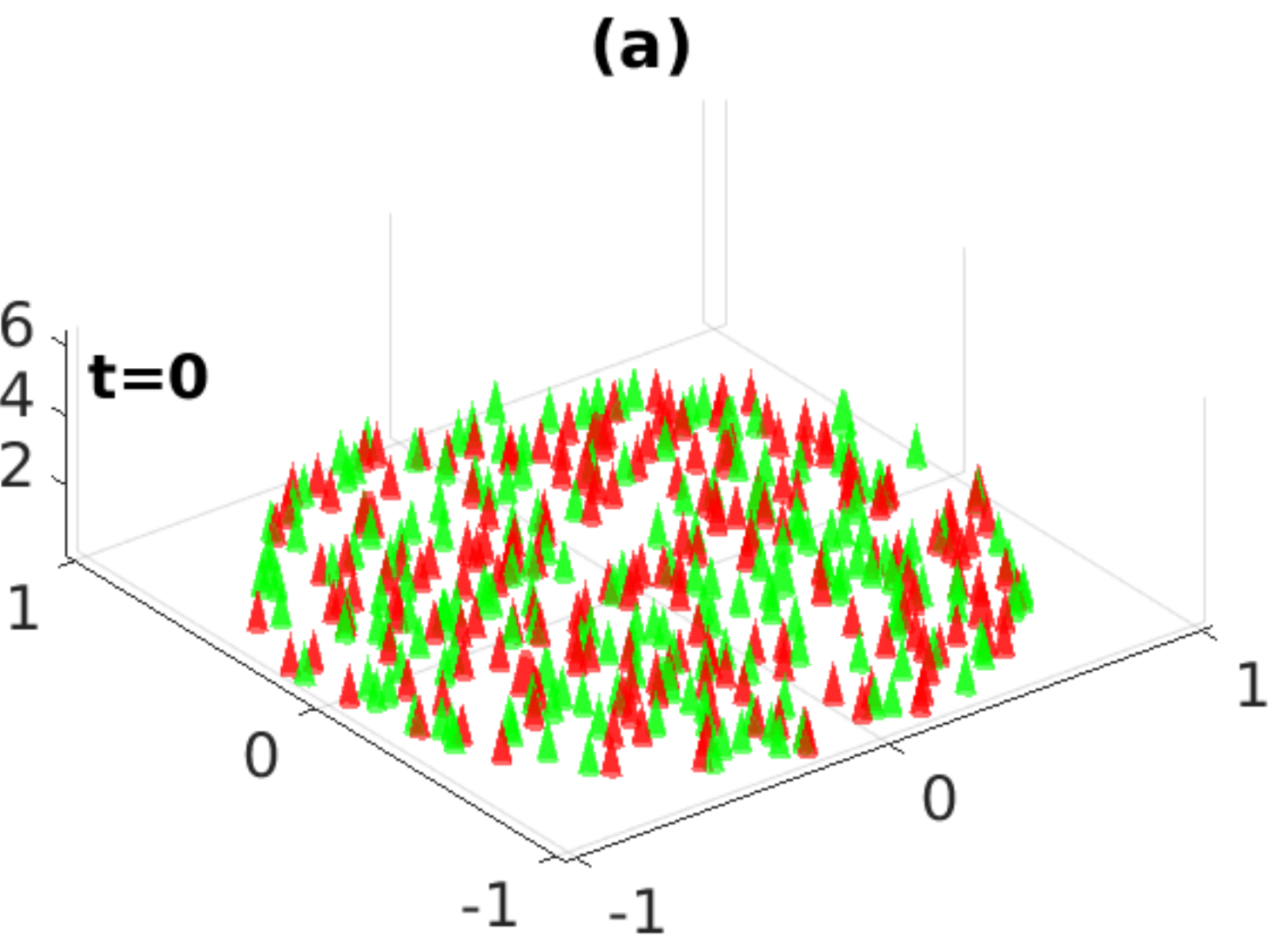}\includegraphics[width=0.33\textwidth]{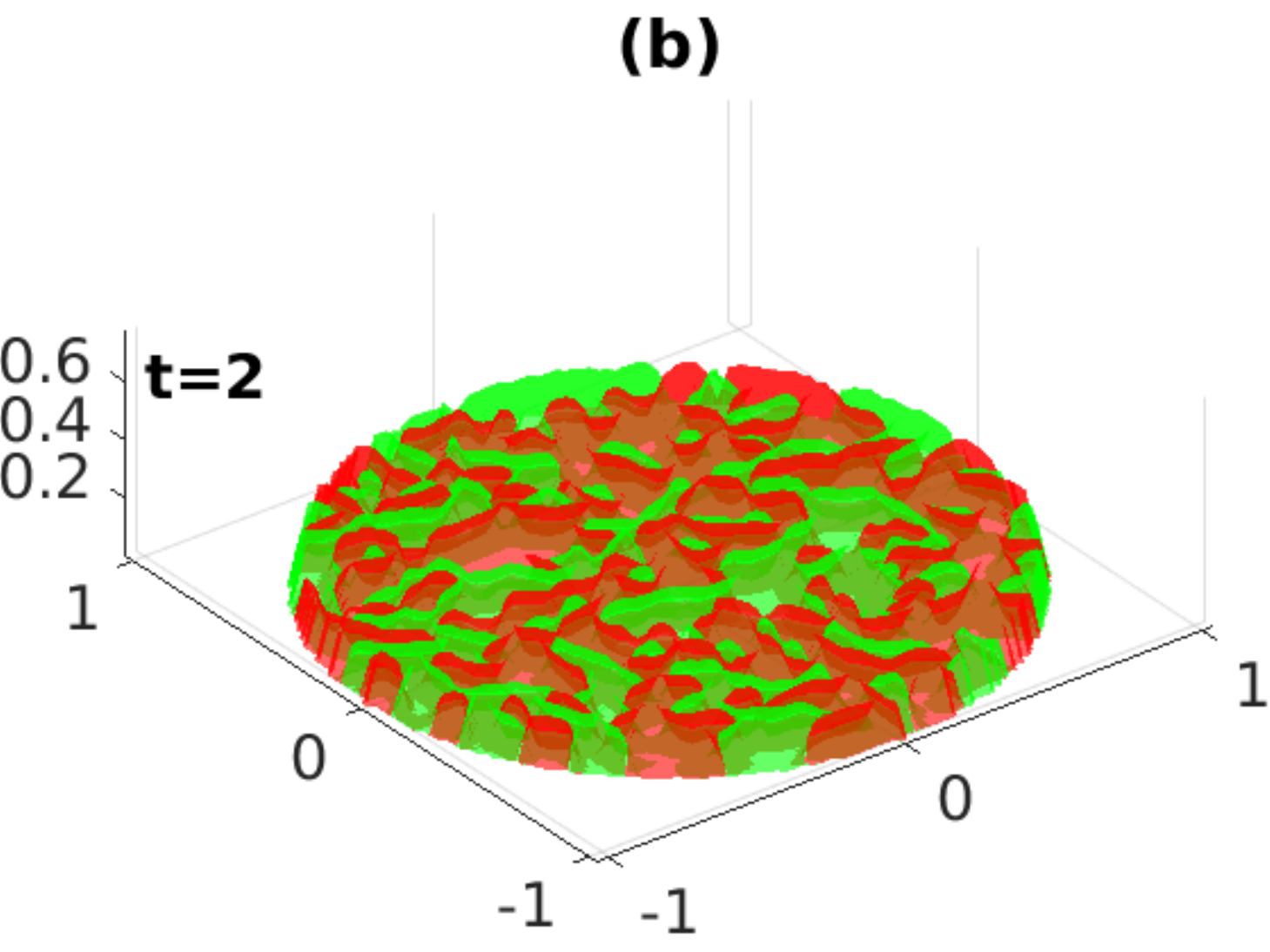}\includegraphics[width=0.33\textwidth]{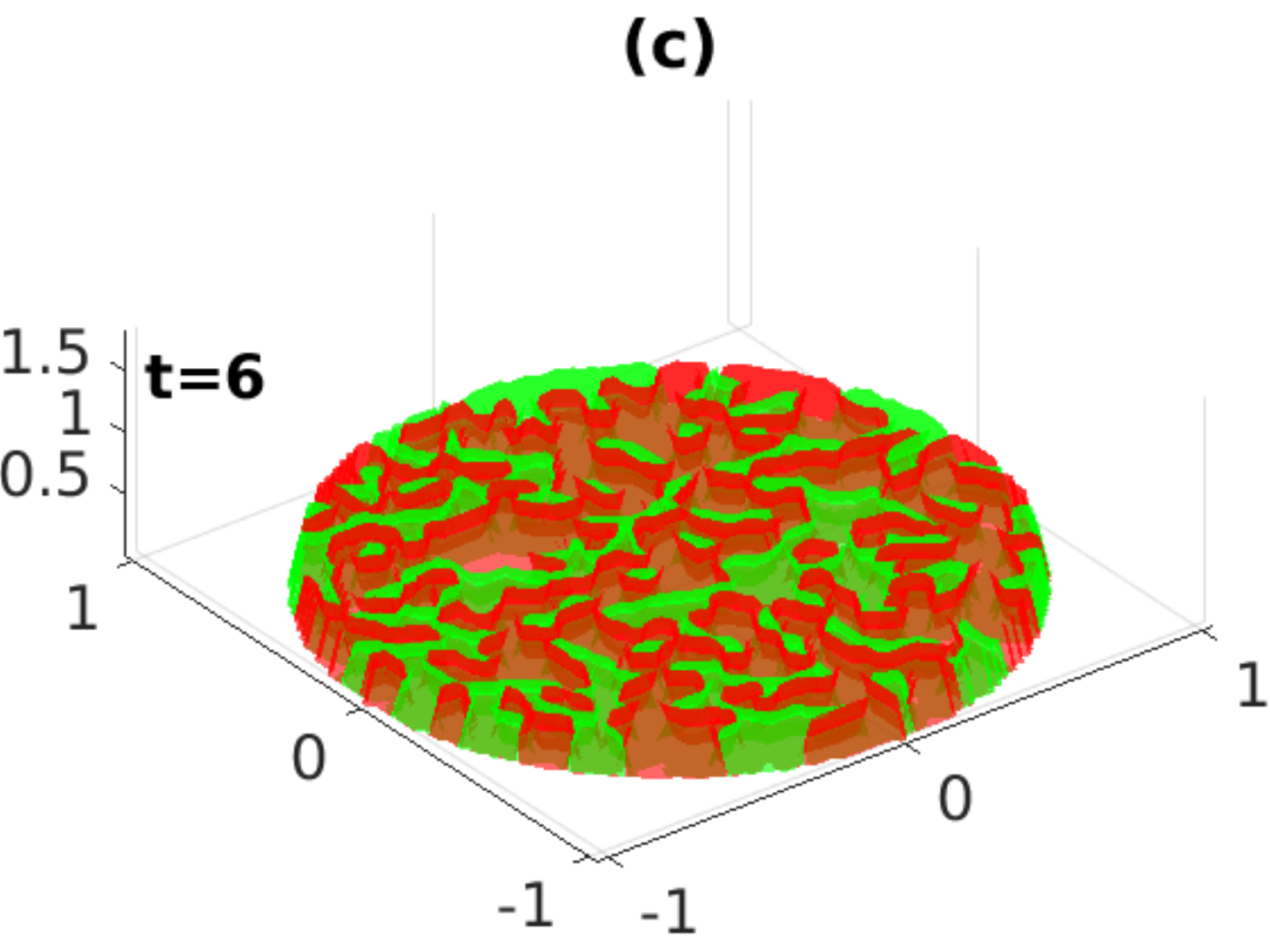}
		\end{center}
		\caption{\textit{Cell co-culture for species $ u_1 $ and $ u_2 $ over 6 days. We plot the case where where cells are densely seeded, i.e., $U_1=U_2=0.1,\, N_{u_1}=N_{u_2}=200$ for day $ 0,2 $ and day $ 6 $. We set parameters as $\delta_1 =0.15,\,\delta_2=0,\,a_{12}=0,\,a_{21} =0$ in \eqref{eq:para_Table_1plus}. Other parameters are given in Table \ref{TABLE1}. }}
		\label{FIG6.2}
	\end{figure}
	In Figure \ref{FIG7} we plot the evolution of the total number and its proportion for species $ u_1 $ and $ u_2 $ over 6 days of the simulation.
	\begin{figure}[H]
		\begin{center}
			\includegraphics[width=0.33\textwidth]{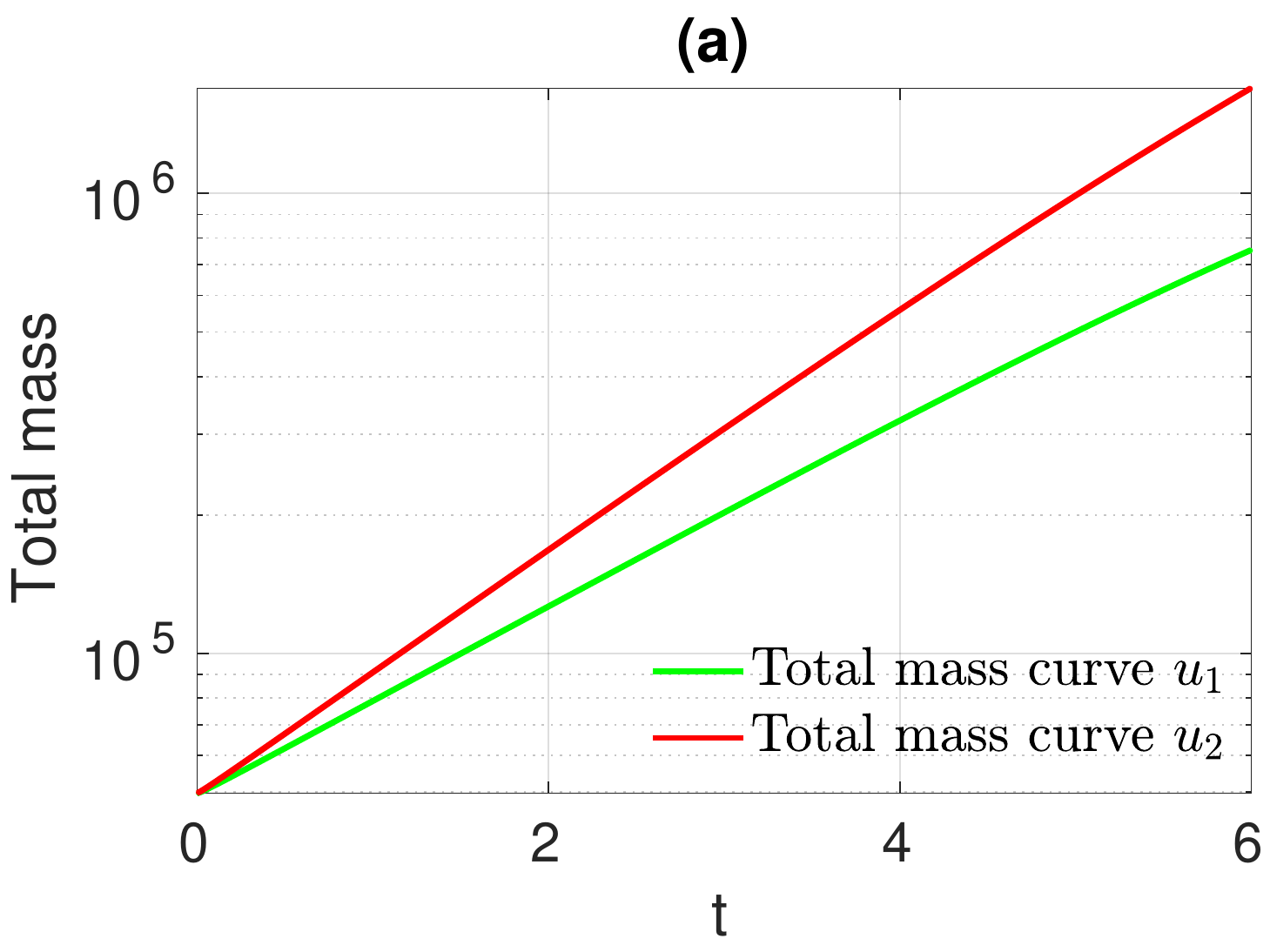}\includegraphics[width=0.33\textwidth]{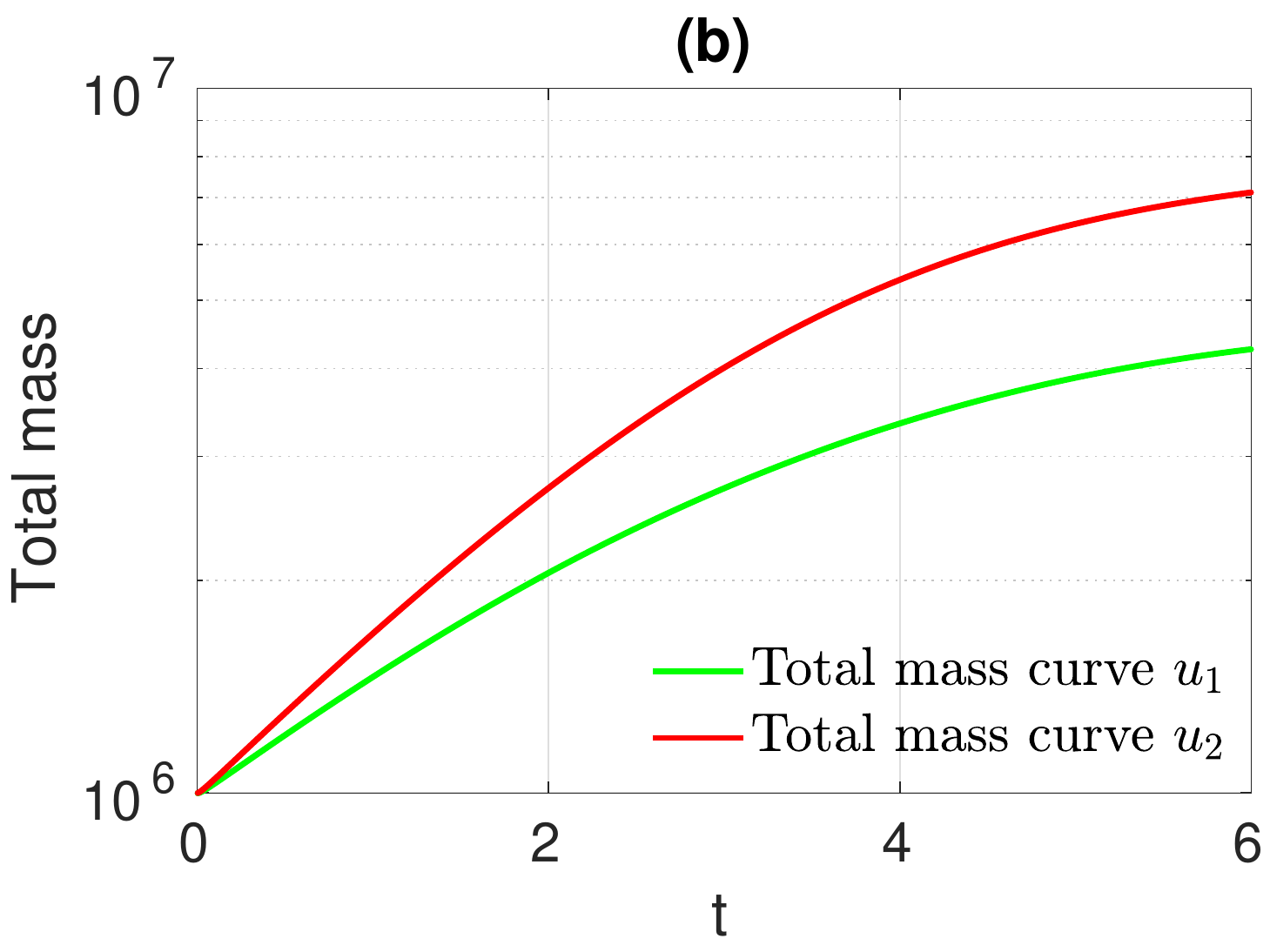}\includegraphics[width=0.33\textwidth]{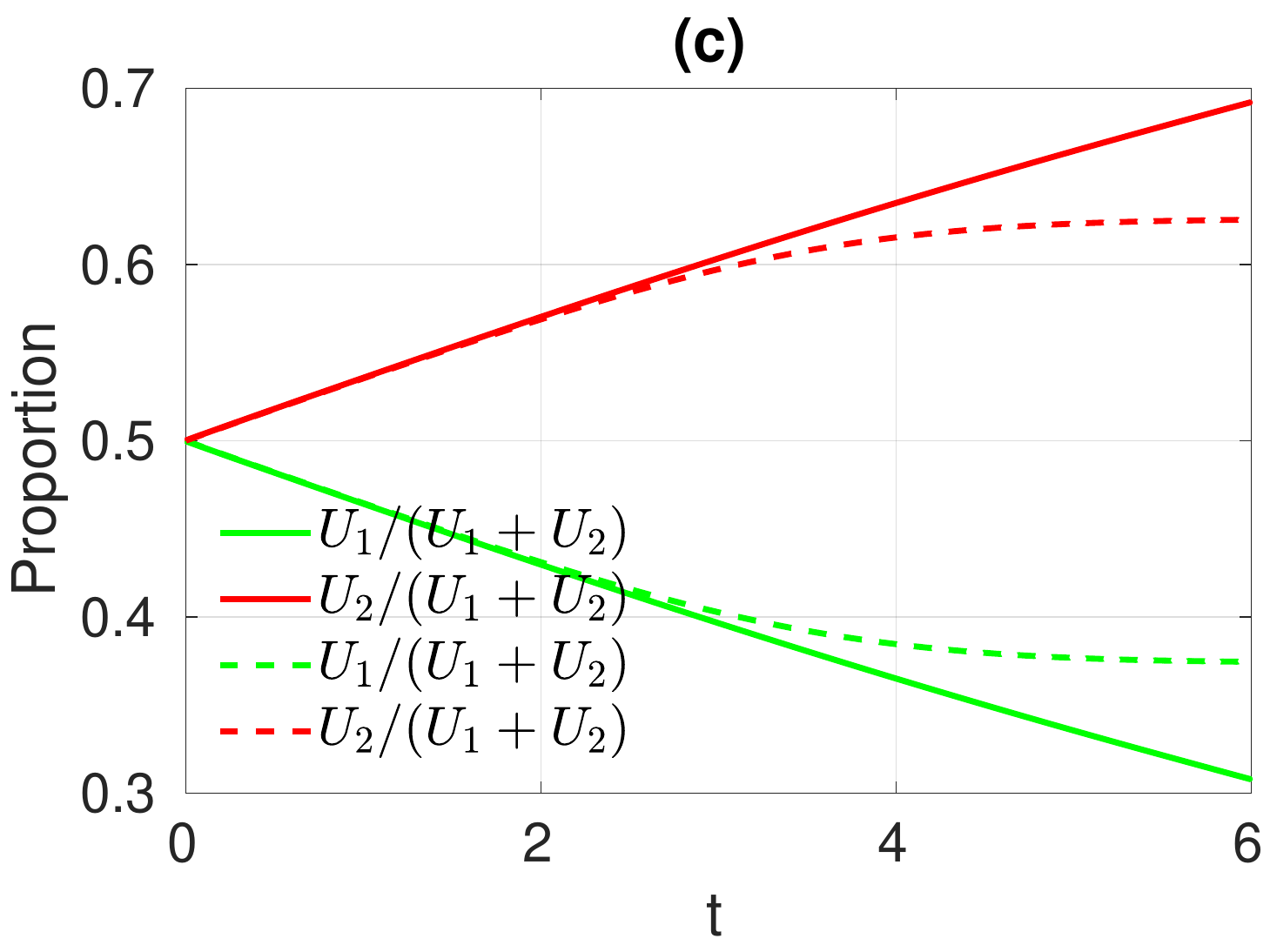}
		\end{center}
		\caption{\textit{Evolution of the  total number (in log scale) and its proportion for species $ u_1 $ and $ u_2 $ over 6 days. Figure (a) is the total number plot corresponding to the simulations in Figure \ref{FIG6.1} while Figure (b) corresponds to the simulations in Figure \ref{FIG6.2}. In Figure (c), the solid lines represents the proportion when the number of initial cell cluster equals $ N_{u_1}=N_{u_2}=10 $ and the dash lines represents the proportion when $ N_{u_1}=N_{u_2}=200$.  Parameters are given in Table \ref{TABLE1} and $\delta_1 =0.15,\,\delta_2=0,\,a_{12}=0,\,a_{21} =0$ in \eqref{eq:para_Table_1plus}.}}
		\label{FIG7}
	\end{figure}
	From Figure \ref{FIG7} (a)-(b), since $u_2$ is resistant to the drug, the number of population $u_2$ is much greater than $u_1$. However, we can also observe a difference in their cell growth curves. In Figure (a) we can see that both cells are in the period of exponential growth from day 0 to day 6 (a base-10 log scale is used for the y-axis). Conversely, in Figure (b) the growth curves for both cells are converging to a constant from day 4 to day 6, implying that the cell co-culture is reaching a saturation stage. More importantly, in Figure (c), we observe a significant difference in the development of population ratios. In fact, since the spatial competition is still the dominant factor in the first two days, we can hardly see any difference between the dashed lines and solid lines. The proportion of the dominant population grows almost linearly. However, the proportion of the densely seeded group changed much slower after day 4, while the sparsely seeded group still grows linearly. This shows that although the growth rate of $u_1$ is at a competitive disadvantage, due to the sufficient number of cluster in the initial stage, and due to the segregation principle, $u_1$ does not die out in a short time in the competition. Although the competitive exclusion applies in this case, the time for the extinction of  $u_1$ will be very long.
	
	\subsubsection{Dependency on the law of the initial distribution}
	In the experiment, the size of the cell dish can be a factor to determine the law of the initial distribution for the cell. In general, under the same total cell number, a small size cell dish will lead to a biased initial distribution and cells are more likely to aggregate at the border. While a big size cell dish will make the cell distribution more homogeneous, thus the initial distribution follows a uniform distribution. Therefore, in this section, we study whether the population ratio can be affected by the law of initial distribution.
	We will choose the beta distribution for the choice of the radius $ r $ and the angle $ \theta $ follows the uniform distribution on $ [0,2\pi] $, that is
	\[ \lbrace r_n\rbrace_{n=1,\ldots,N} \sim  \mathrm{Beta}(\alpha,\beta),\quad \lbrace \theta_n\rbrace_{n=1,\ldots,N} \sim \mathrm{U}(0,2\pi). \]
	The coordinate transformation of the initial distribution to a unit disk is as follows
	\begin{equation}\label{2.2}
		\begin{cases}
			x_n = \sqrt{r_n} \cos(\theta_n)\\
			y_n = \sqrt{r_n} \sin(\theta_n)
		\end{cases}n=1,2,\ldots,N.
	\end{equation}
	In Figure \ref{FIG1}, we plot the density function of the beta distribution for different $ \alpha,\beta $
	\[ f_{\alpha,\beta} (x) = 1/ B(\alpha,\beta)\, x^{\alpha-1}(1-x)^{\beta-1}, \]
	where $ B(\alpha,\beta) $ is a normalization constant to ensure that the total integral is $ 1 $.
	\begin{figure}[H]
		\begin{center}
			\includegraphics[width=0.5\textwidth]{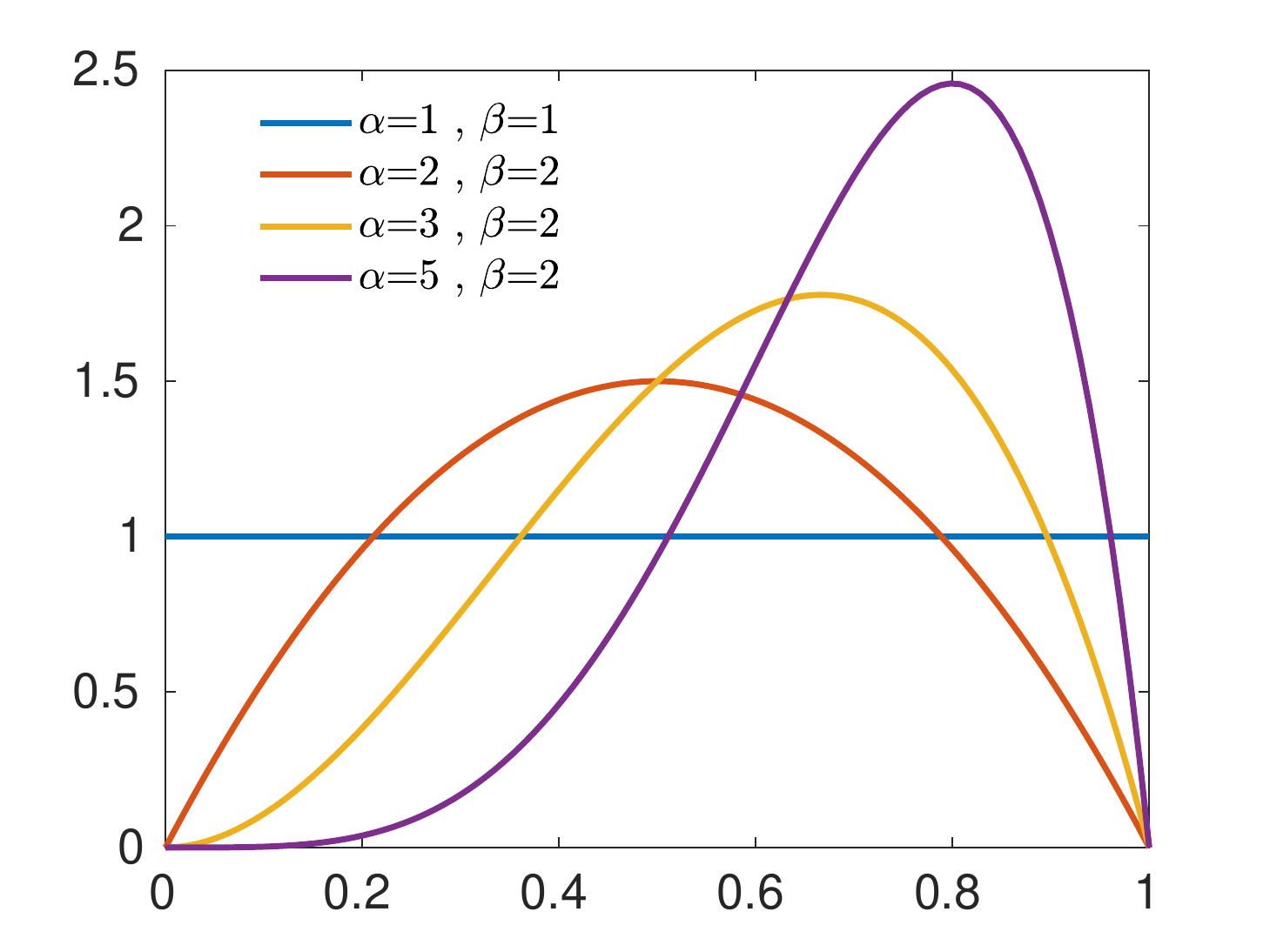}
		\end{center}
		\caption{\textit{Density function of the initial distribution $ f_{\alpha,\beta} (x) = 1/ B(\alpha,\beta)\, x^{\alpha-1}(1-x)^{\beta-1} $ for different $ \alpha $ and $ \beta $, where $ B(\alpha,\beta) $ is a normalization constant to ensure that the total integral is $ 1 $.}}
		\label{FIG1}
	\end{figure}
	Our simulation will mainly compare the following two cases 
	\[ (\alpha_1,\beta_1) = (1,1),\quad (\alpha_2,\beta_2) = (3,2).\]
	We plot the initial distributions of the two different cases in Figure \ref{FIG2} where we choose $ 40 $ cell clusters (i.e., $ N_{u_1}=40 $ and $ N_{u_2}= 40 $  in \eqref{2.2}) for species $ u_1 $ and species $ u_2 $.	
	\begin{figure}[H]
		\begin{center}
			\includegraphics[width=0.5\textwidth]{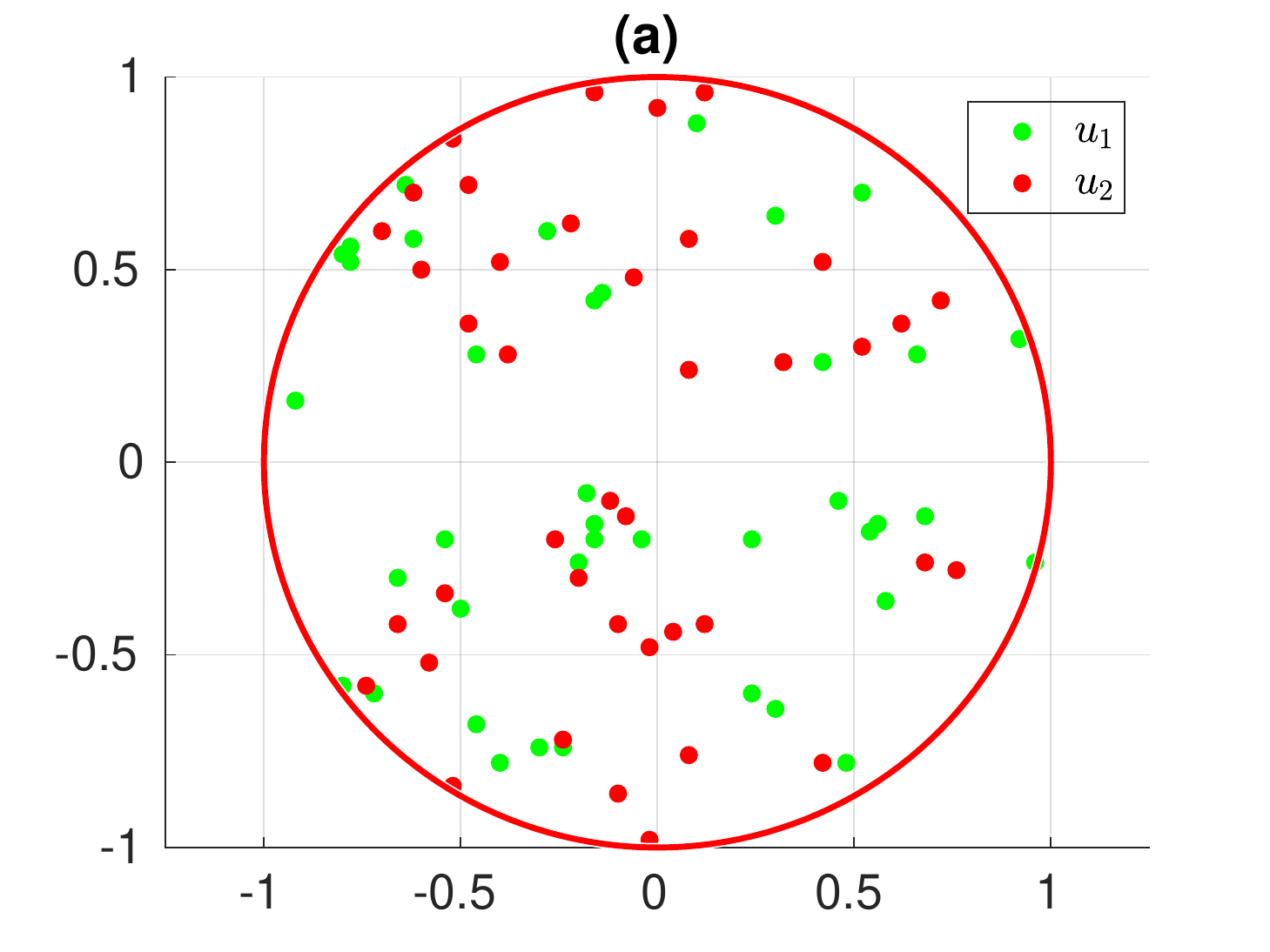}\includegraphics[width=0.5\textwidth]{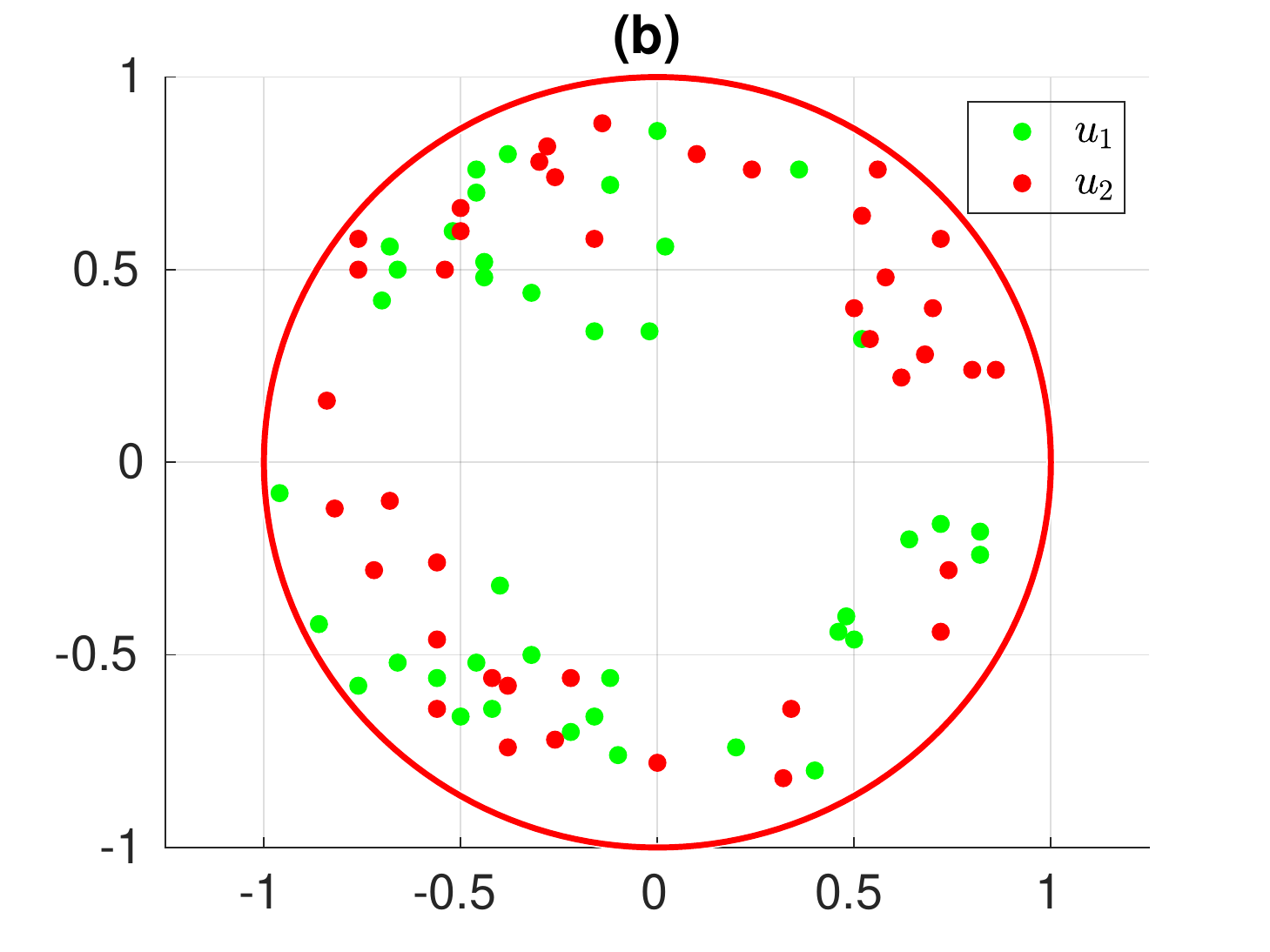}
		\end{center}
		\caption{\textit{Spatial distribution of the initial values when  $(\alpha,\beta) =(1,1)  $ (Figure (a)) and $ (\alpha,\beta) =(3,2) $ (Figure (b)). Here red dots and green dots in Figure \ref{FIG2} represent cell clusters. We take the cell cluster number $ N_{u_1}=40 $ and $ N_{u_2}=40 $ for both cases. }}
		\label{FIG2}
	\end{figure}	
	Suppose the initial cell clusters $ N_{u_1}=N_{u_2}=40 $ and cell total number $ U_1=U_2=0.02 $, which is equally distributed in each cell cluster. Typical numerical solutions are shown in Figure \ref{FIG3} when $  (\alpha_1,\beta_1) = (1,1) $ and in Figure \ref{FIG4} when $ (\alpha_2,\beta_2) = (3,2) $. 
	\begin{figure}[H]
		\begin{center}
			\includegraphics[width=0.33\textwidth]{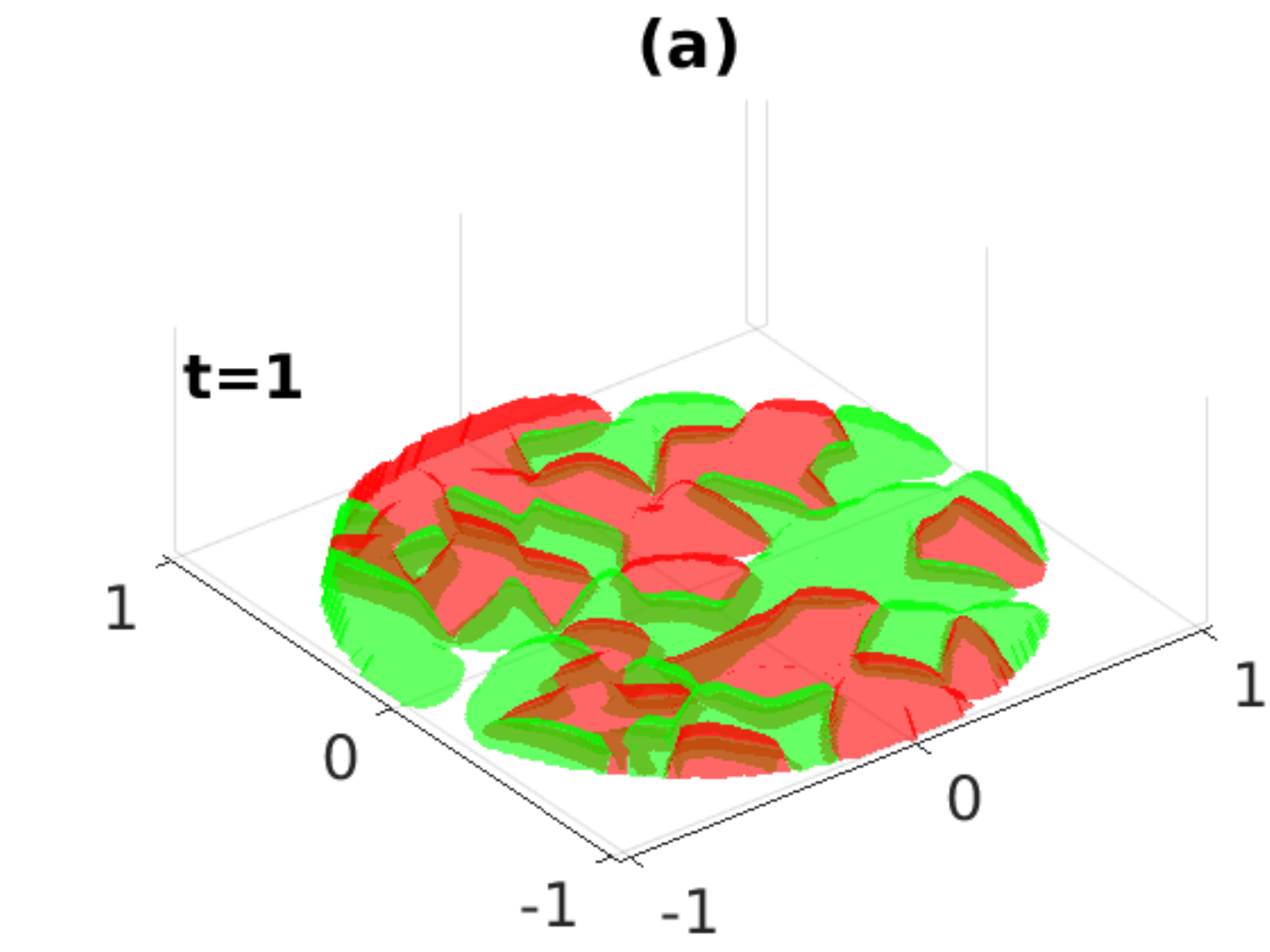}\includegraphics[width=0.33\textwidth]{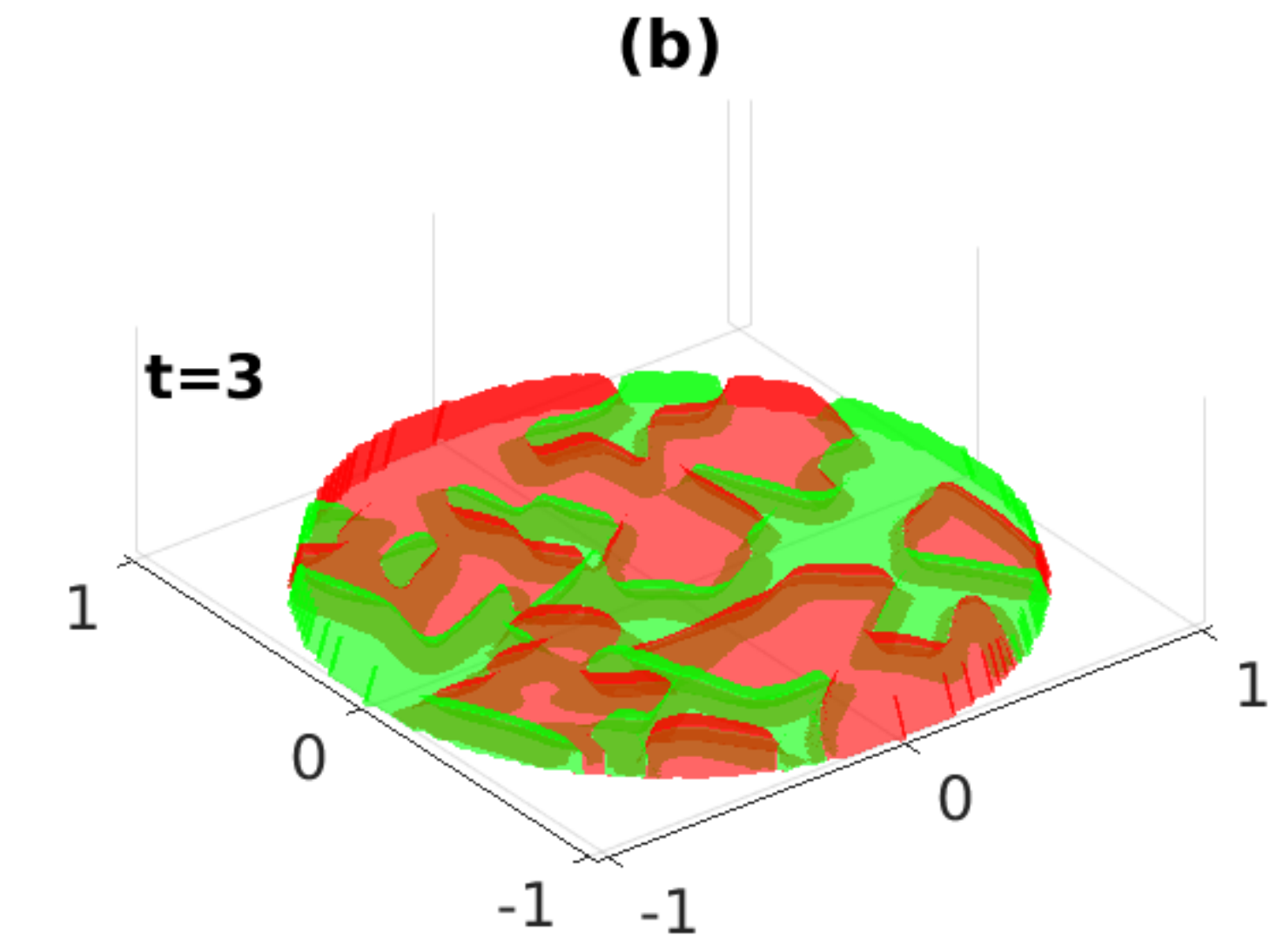}\includegraphics[width=0.33\textwidth]{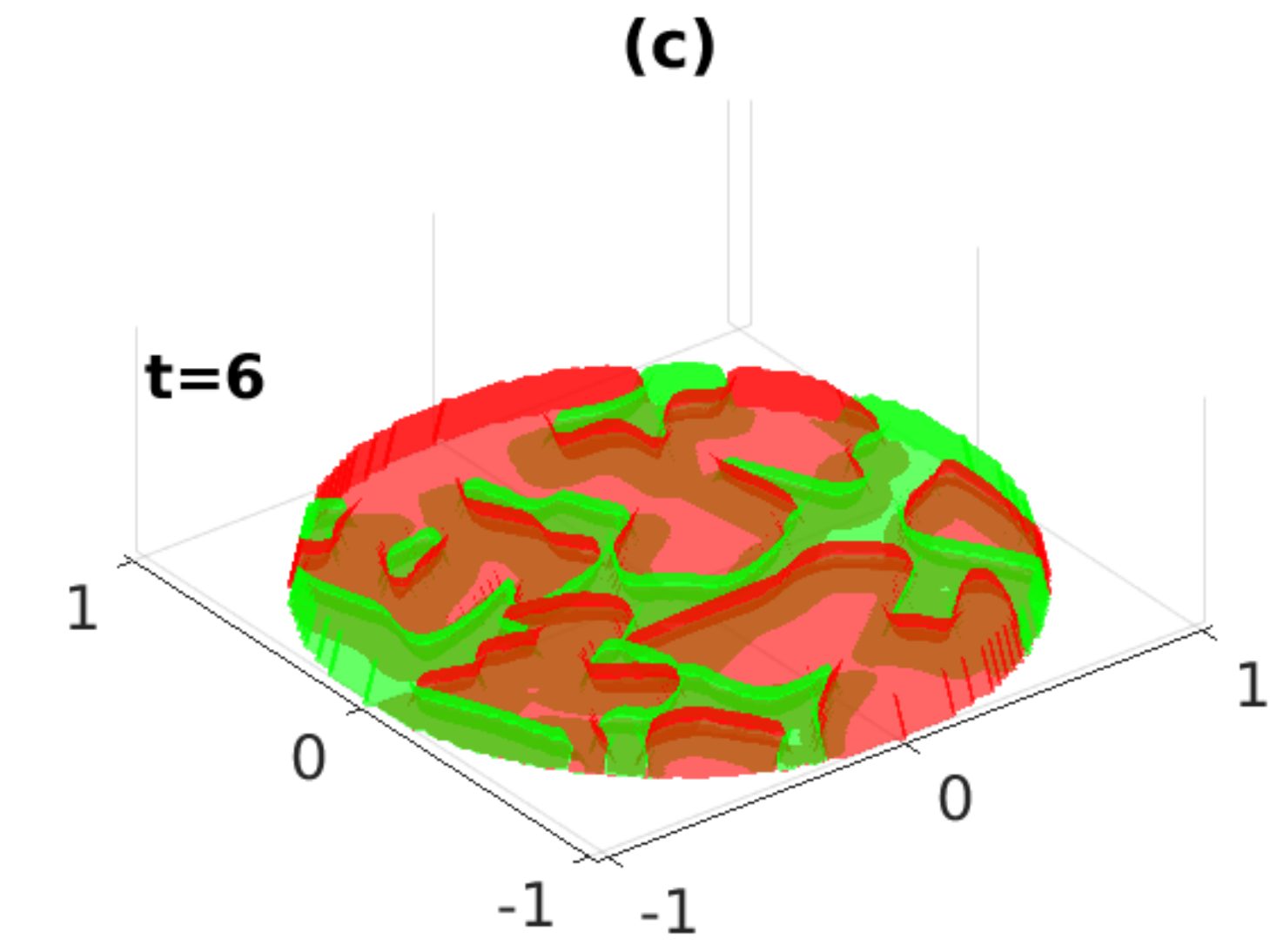}
		\end{center}
		\caption{\textit{Cell co-culture for species $ u_1 $ and $ u_2 $ over 6 days. We plot the case where the initial distribution follows beta function with parameters $(\alpha,\beta) =(1,1)$, namely the uniform distribution, for day $ 1,3 $ and day $ 6 $. Parameters are given in Table \ref{TABLE1} and $\delta_1 =0.15,\,\delta_2=0,\,a_{12}=0,\,a_{21} =0$ in \eqref{eq:para_Table_1plus}.}}
		\label{FIG3}
	\end{figure}
	\begin{figure}[H]
		\begin{center}
			\includegraphics[width=0.33\textwidth]{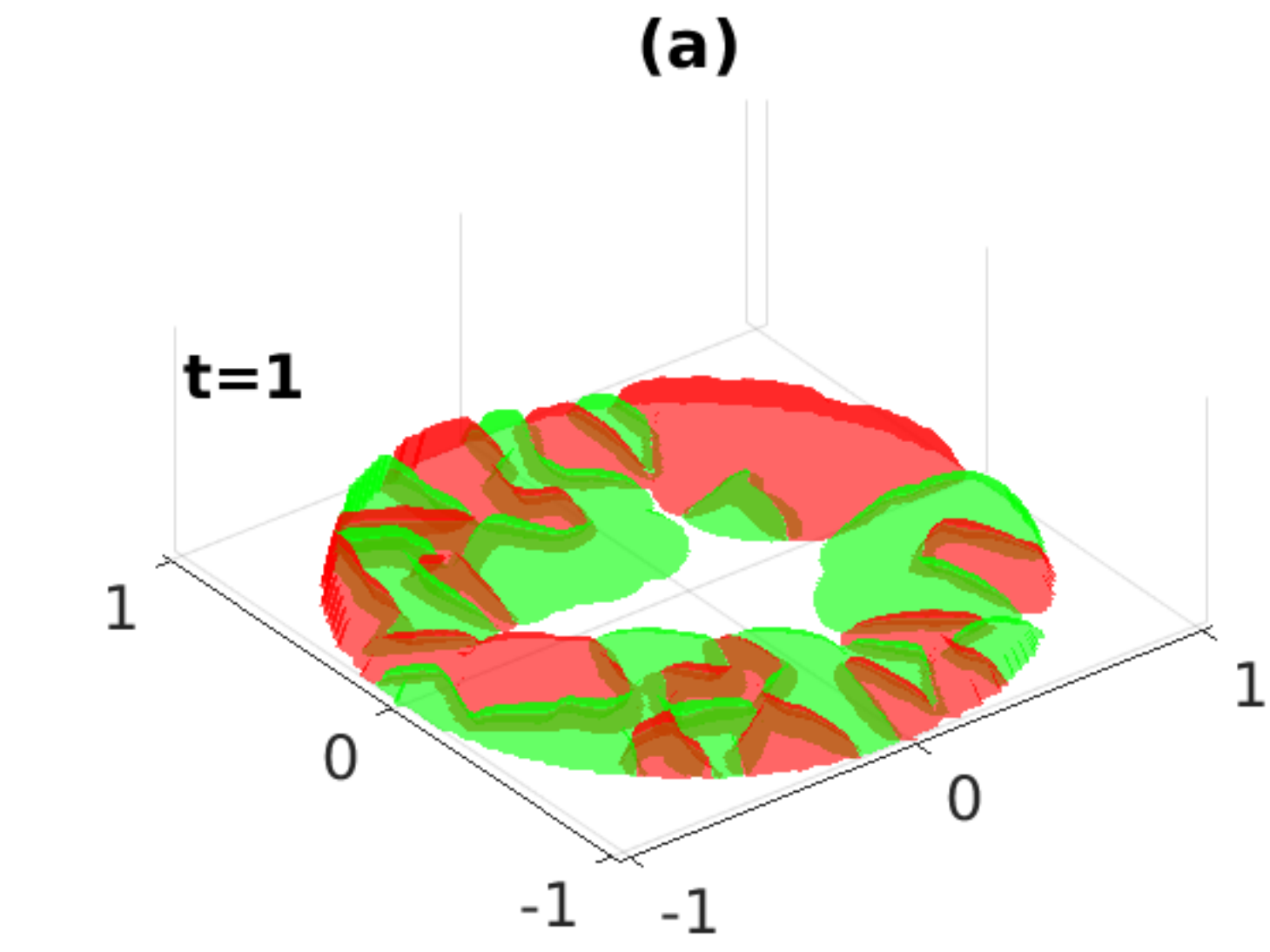}\includegraphics[width=0.33\textwidth]{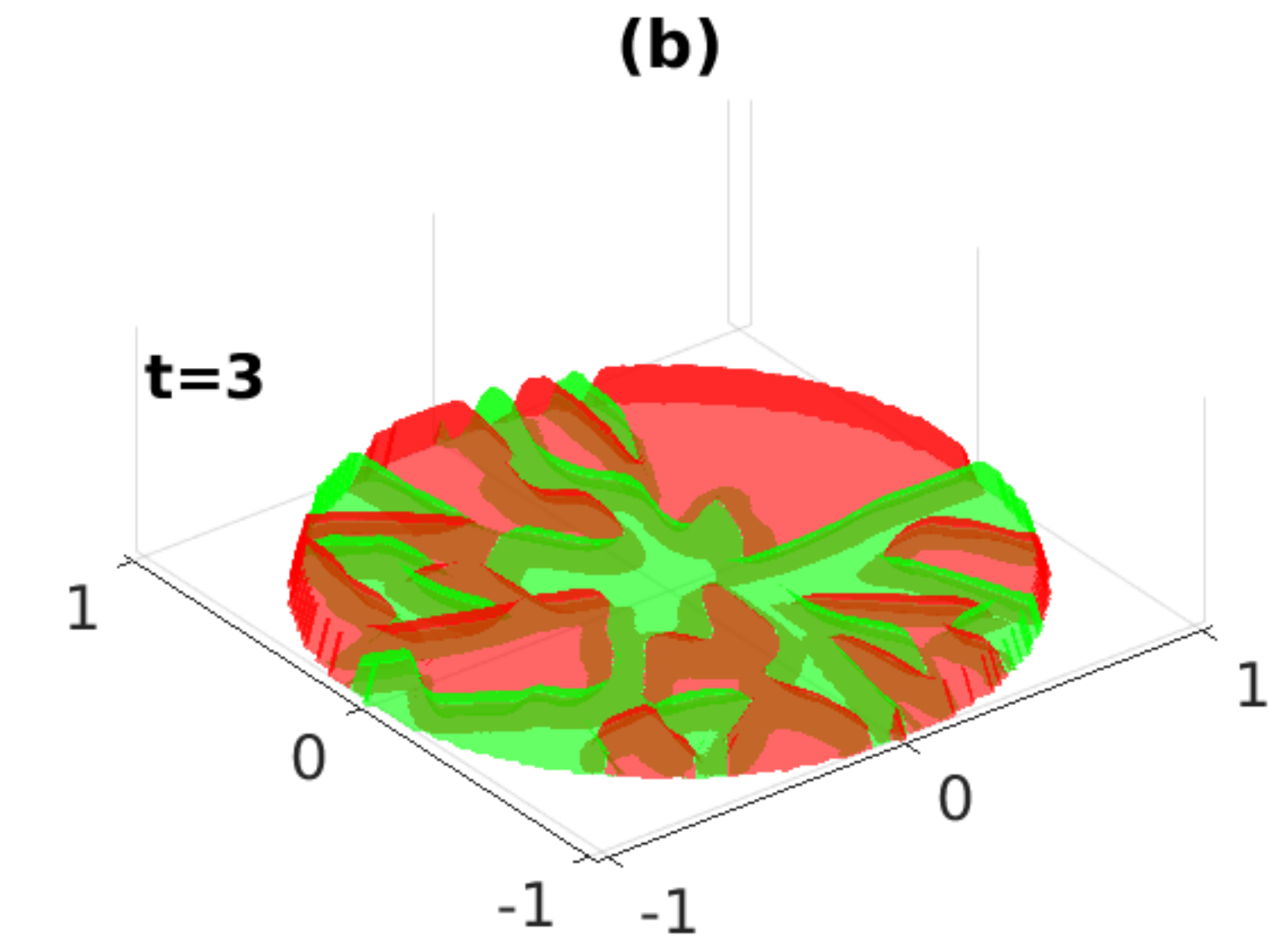}\includegraphics[width=0.33\textwidth]{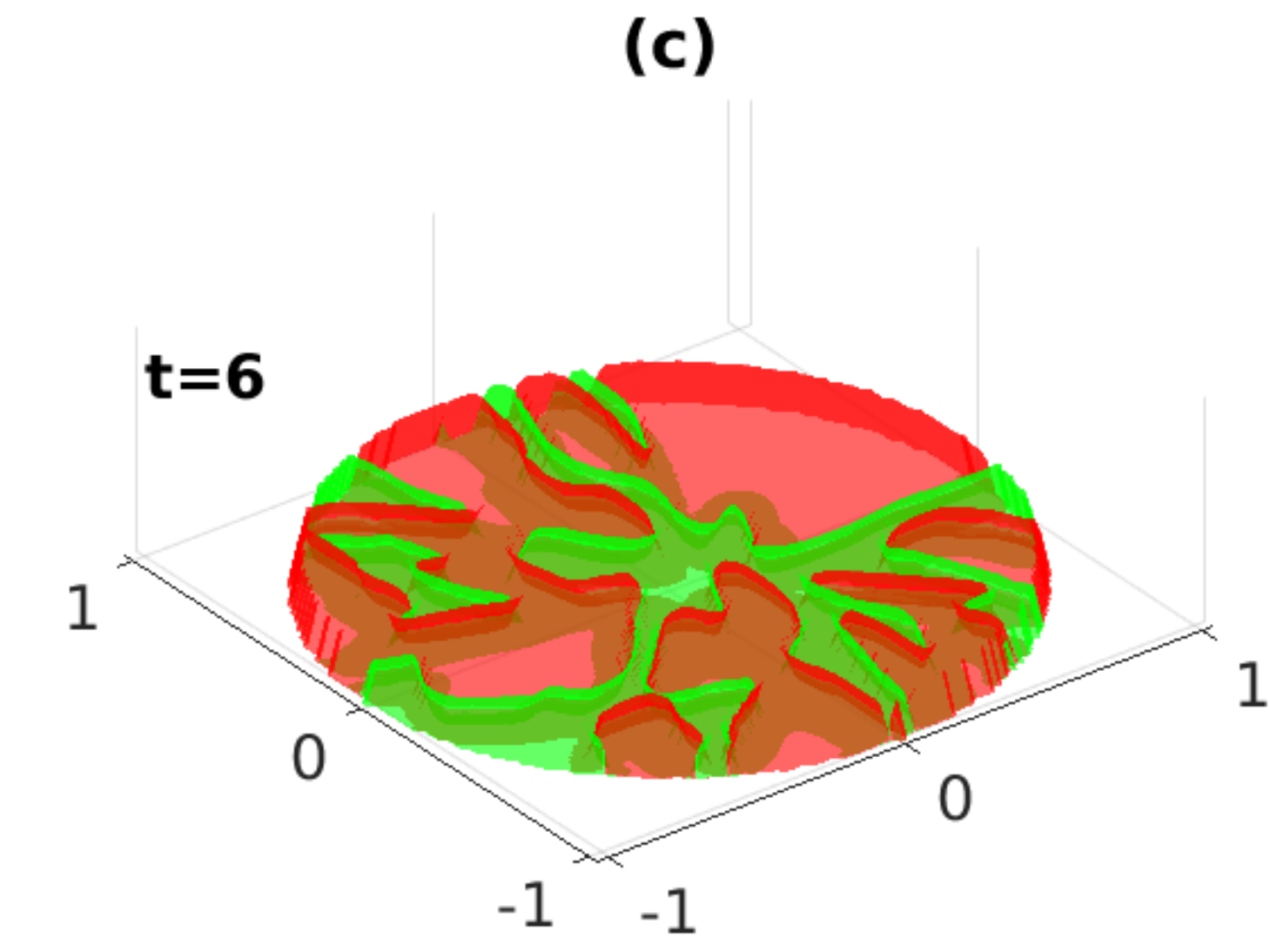}
		\end{center}
		\caption{\textit{Cell co-culture for species $ u_1 $ and $ u_2 $ over 6 days. We plot the case where the initial distribution follows beta function with parameters $(\alpha,\beta) =(3,2)$, namely a biased distribution, for day $ 1,3 $ and day $ 6 $. Parameters are given in Table \ref{TABLE1} and $\delta_1 =0.15,\,\delta_2=0,\,a_{12}=0,\,a_{21} =0$ in \eqref{eq:para_Table_1plus}. }}
		\label{FIG4}
	\end{figure}
	Now we plot the evolution of the  total number for species $ u_1 $ and $ u_2 $ over 6 days.
	\begin{figure}[H]
		\begin{center}
			\includegraphics[width=0.33\textwidth]{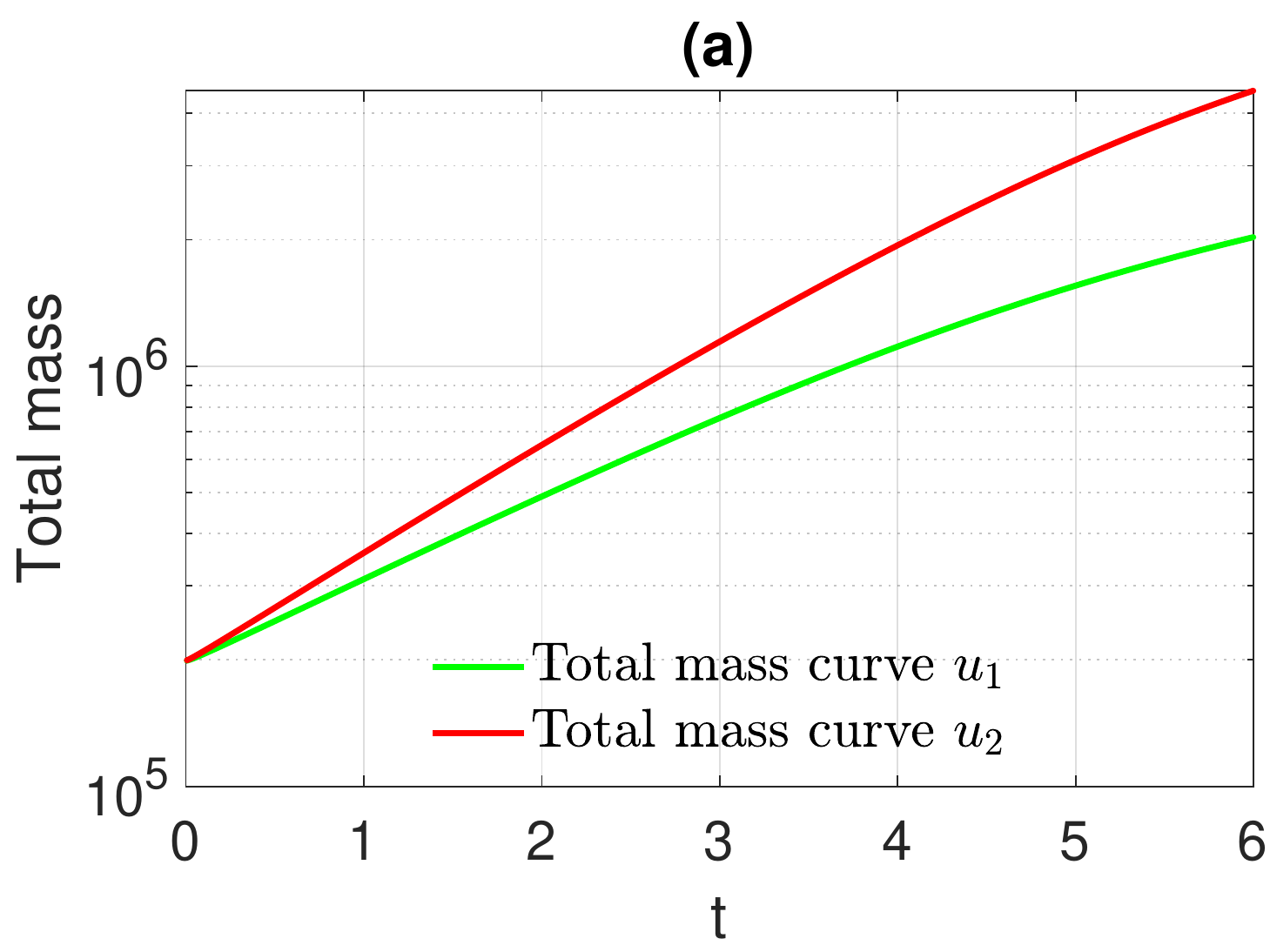}\includegraphics[width=0.33\textwidth]{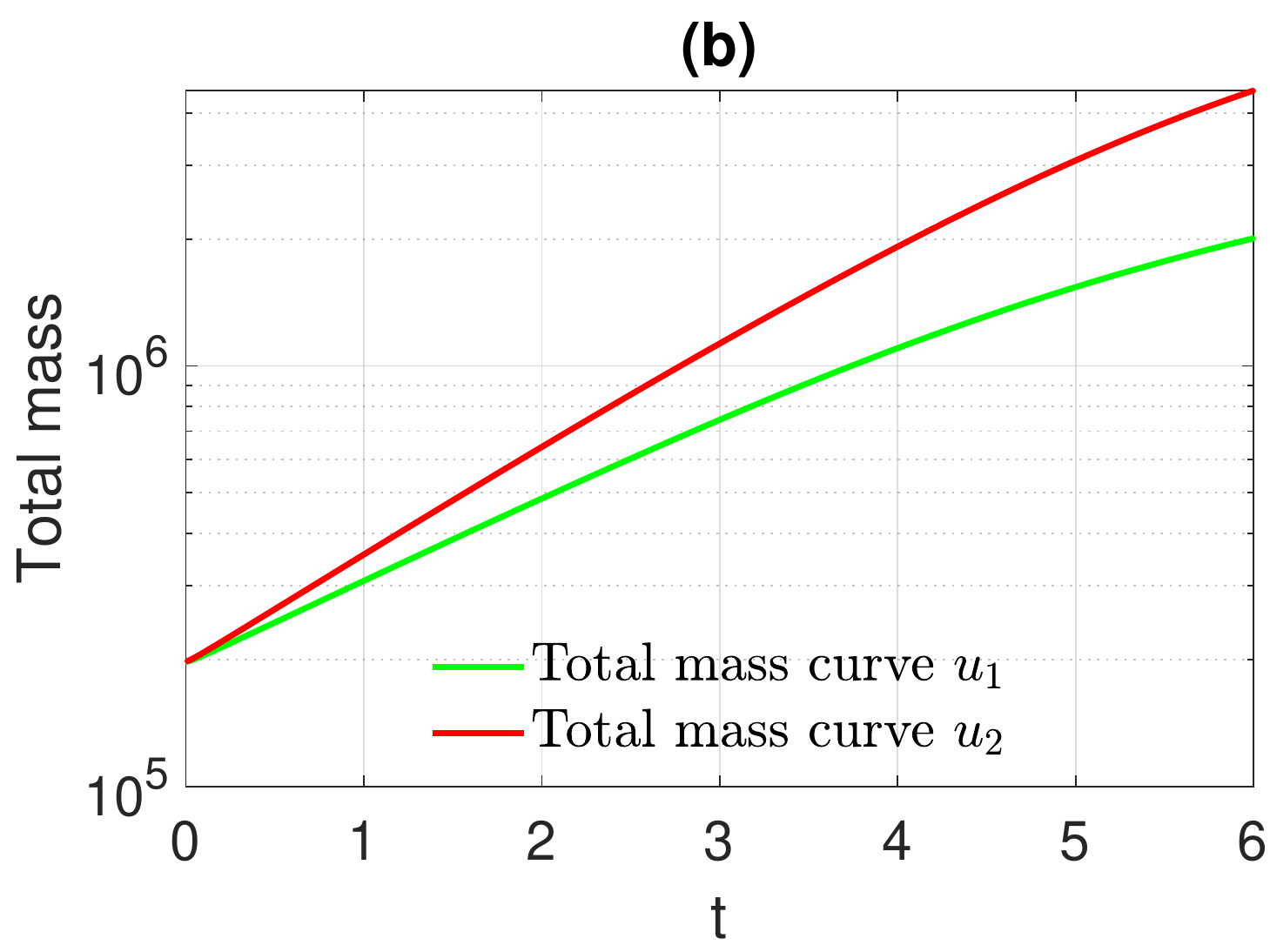}\includegraphics[width=0.33\textwidth]{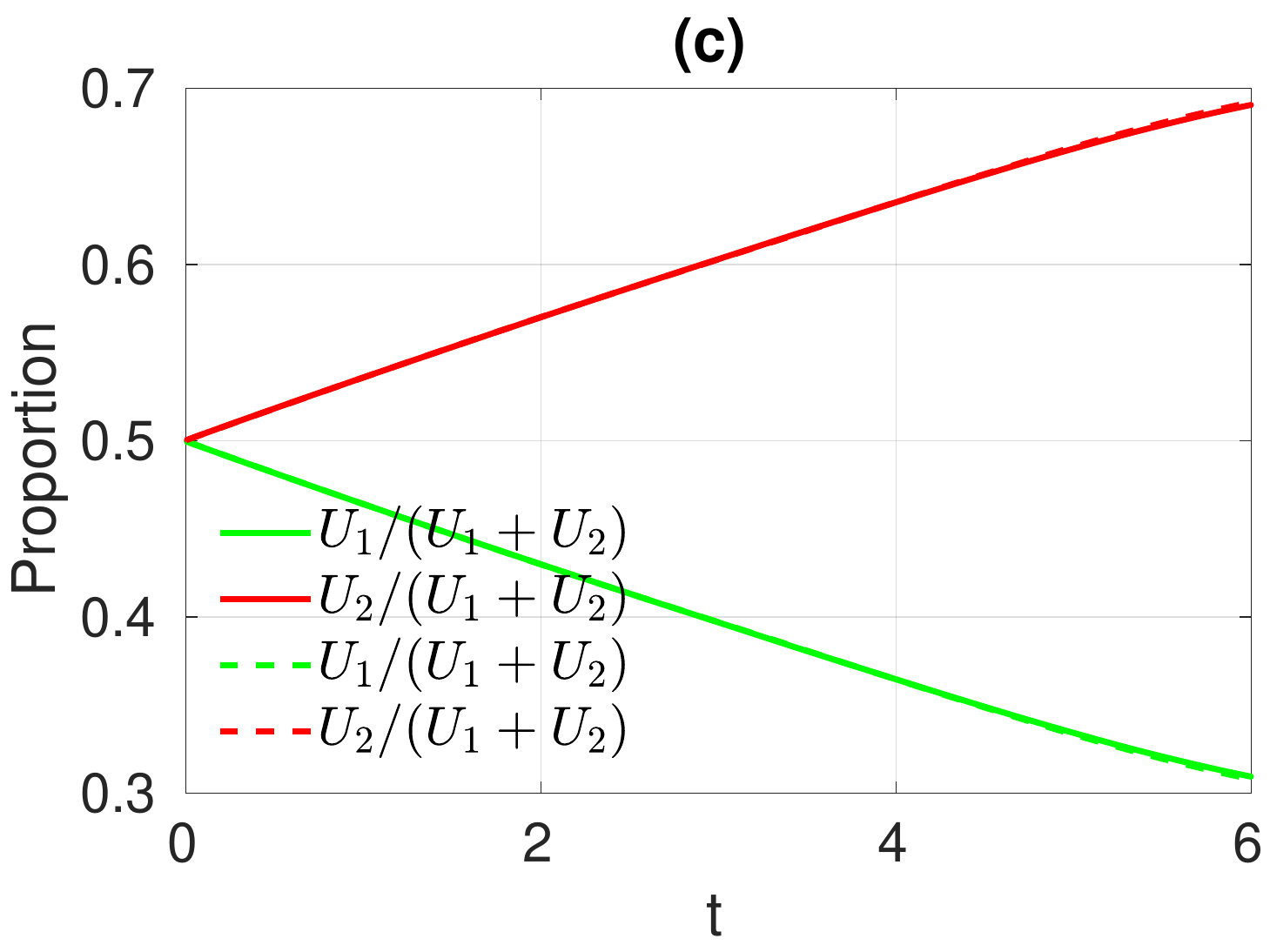}
		\end{center}
		\caption{\textit{Evolution of the  total number (in log scale) and its proportion for species $ u_1 $ and $ u_2 $ over 6 days. Figure (a) is the total number plot corresponding to the simulation with an uniform initial distribution in Figure \ref{FIG3} while Figure (b) corresponds to the simulation with a biased initial distribution in Figure \ref{FIG4}. In Figure (c), the solid lines represents the proportion when the initial distribution follows uniform distribution, i.e., $(\alpha,\beta) =(1,1)  $ and the dash lines represents the proportion to the case $ (\alpha,\beta) =(3,2) $. From Figure (c), we can see  that they are overlapped.  Parameters are given in Table \ref{TABLE1} and $\delta_1 =0.15,\,\delta_2=0,\,a_{12}=0,\,a_{21} =0$ in \eqref{eq:para_Table_1plus}.}}
		\label{FIG5}
	\end{figure}
	From  Figure \ref{FIG5} we can see that the law of initial distribution has almost no influence on the final proportion of species. We also tried different scenarios when the cell total numbers are 20, 50 and 100 or with different extra mortality rate $\delta_1 =0,0.2 $ and $0.5$,  the results are similar. Thus we can deduce that the final relative proportion is stable under the variation of the law of the initial distribution.   
	
	Combining the above numerical experiments in Section 3.2.1 and Section 3.2.2, we can see that under the competitive principle, the difference in the spatial resources can change the competition induced by the cell dynamics.  To be more precise, under the case of sufficient spatial resources, the competitive mechanism can be more sufficiently expressed than in the case of less spatial resources. In Section 3.2.2, although we changed the law of the initial distribution of the cell seeding, as for the overall spatial resources, it is the same for both species. Therefore, the result of the competitive principle is almost the same in terms of the total number and the population ratio of the two populations.

	\subsection{Impact of the dispersion coefficient on the population ratio}
	
	In Section 3.2, when the parameters of the model are the same,  the competition induced by the cell dynamics can be reflected by the difference in the spatial resource. Now we assume the spatial resource is the same and we investigate the role of the dispersion coefficient in the evolution of the species.
	
	To that aim, we let the initial distribution of the two species follow the same uniform distribution and they are sparsely seeded on the dish. Furthermore, we let the cell dynamics for the two population to be almost the same, the only variable we control here is the dispersion coefficient for the population. We take the same uniform initial distribution at day 0, with the same number of initial cluster and the same amount of cell total number, i.e.,
	\begin{equation}\label{eq3.7}
		U_1=U_2= 0.005,\quad N_{u_1}  = N_{u_2}=10,\quad a_{12}=a_{21}=0. 
	\end{equation}
	We compare the following two scenarios in Table \ref{Table:Spatial_competition} where the only difference is the dispersion parameters.
	\begin{table}[H]
		\centering
		\begin{tabular}{ccccc}
			\doubleRule
			Parameters     & $d_1$ & $  d_2$ & $\delta_1$ & $\delta_2$\\
			\hline
			\noindent\textbf{scenario 1:}     & $ 2 $ & $ 2 $ & 0 & 0\\
			\noindent\textbf{scenario 2:}     & $ 2 $ & $ 0.2 $& 0 & 0\\
			\doubleRule
		\end{tabular}
		\caption{\textit{Two sets of dispersion coefficients for $u_1$ and $u_2$.}}
		\label{Table:Spatial_competition}
	\end{table}
	
		In scenario 1, the dispersion coefficients of the two species are the same, while in scenario 2 we suppose the species $u_1$ has an advantage in the spatial competition over its competitor $u_2$.
	\begin{figure}[H]
		\begin{center}
			\includegraphics[width=0.33\textwidth]{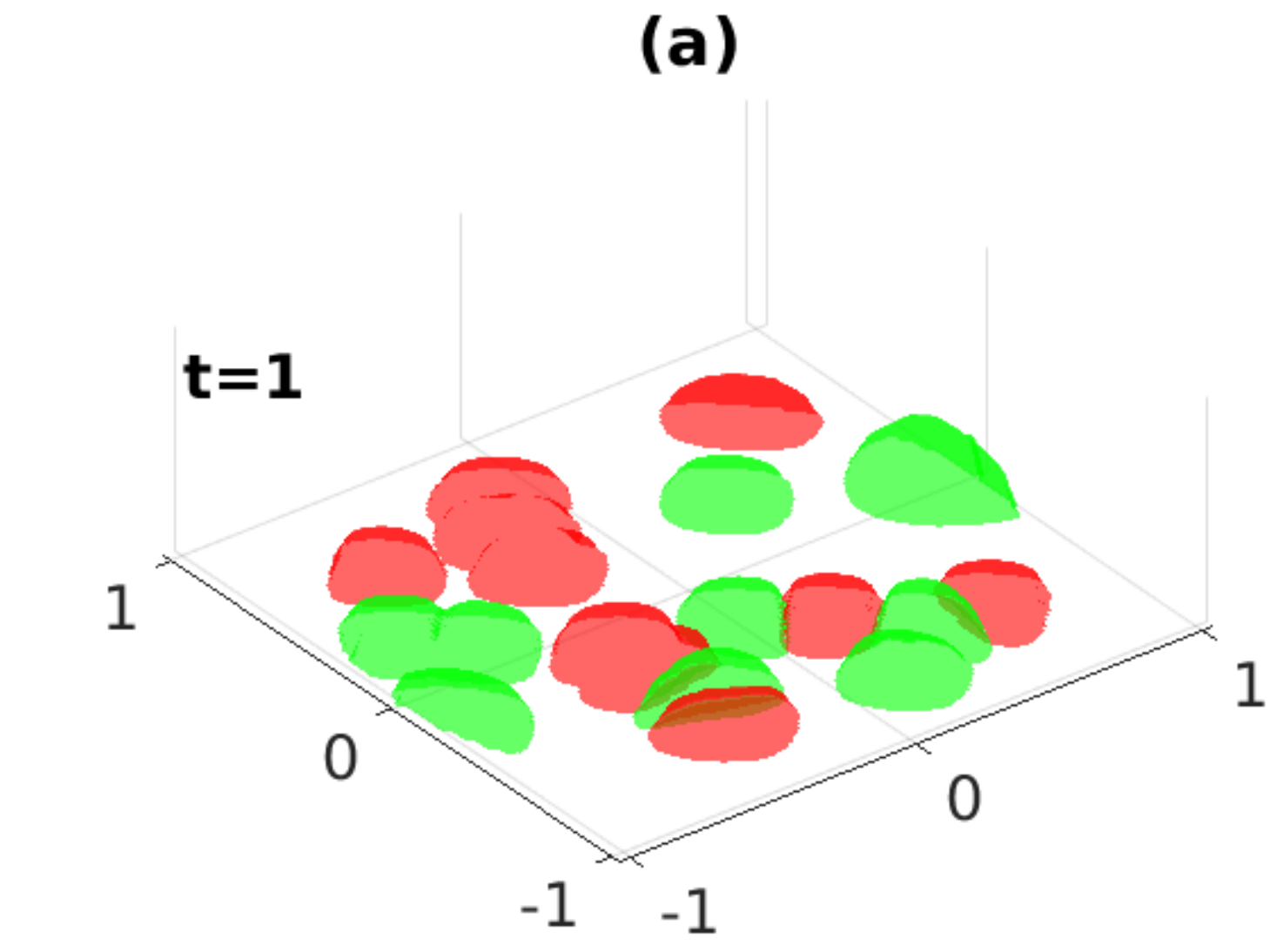}\includegraphics[width=0.33\textwidth]{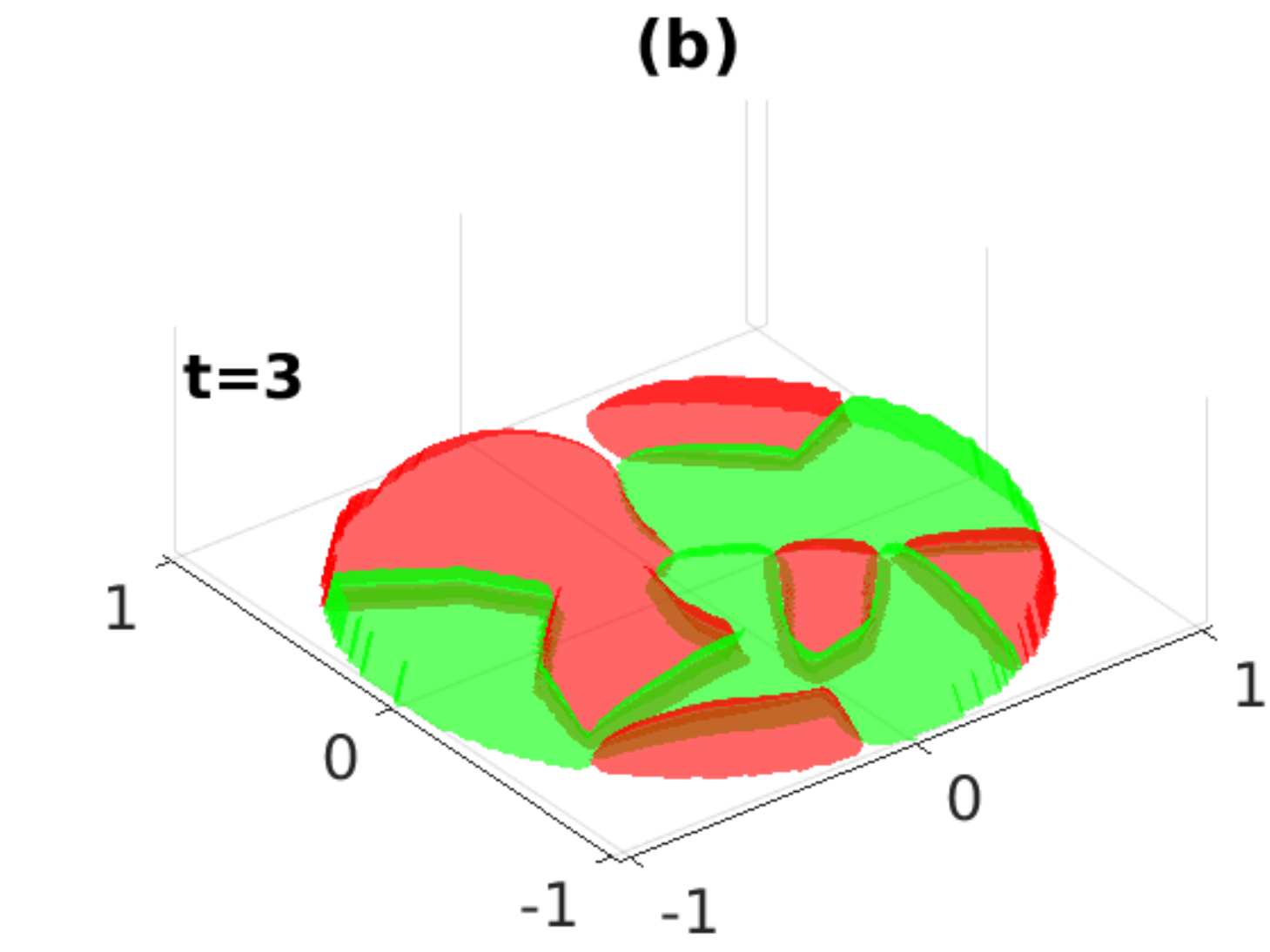}\includegraphics[width=0.33\textwidth]{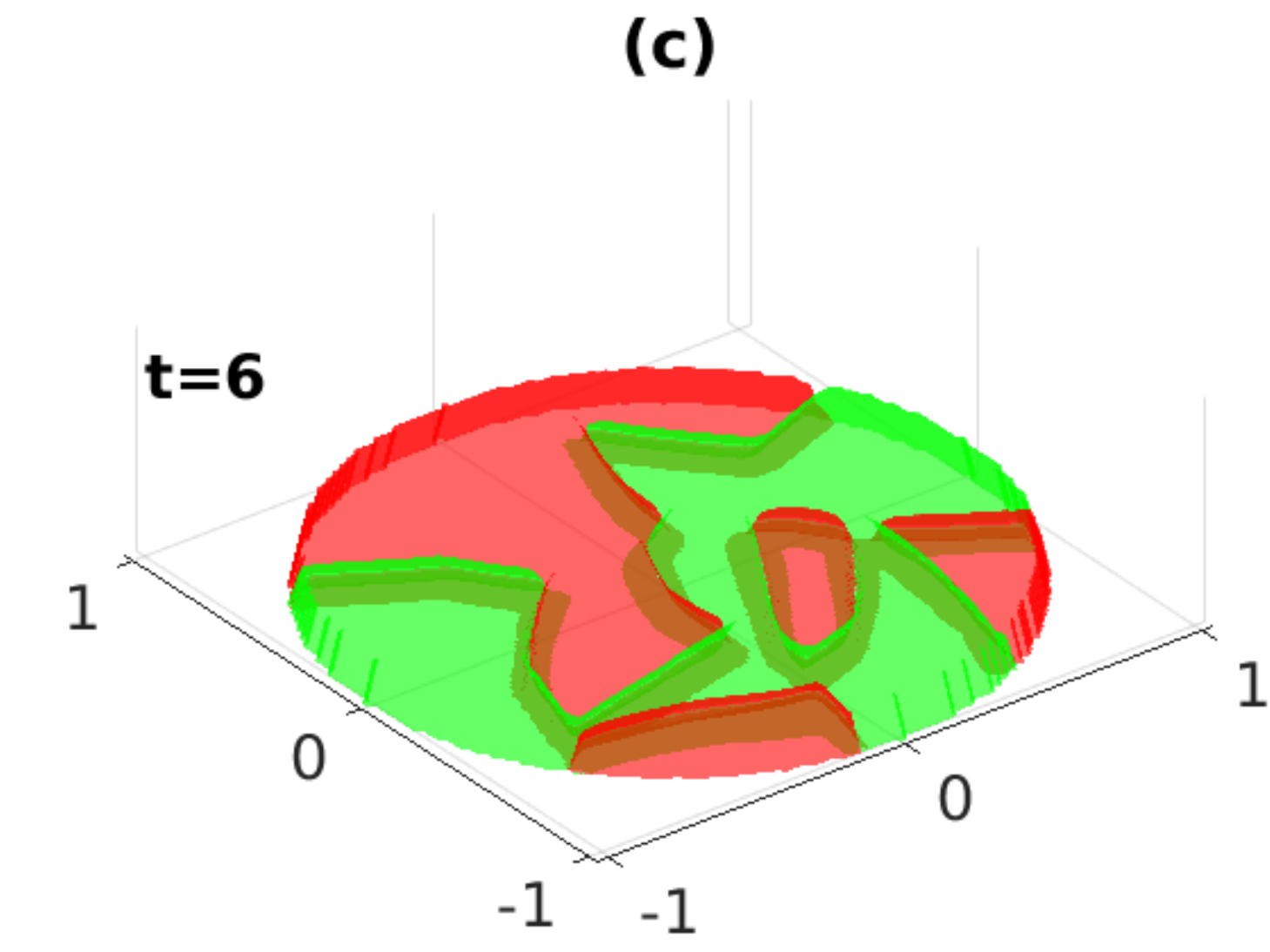}
			\includegraphics[width=0.33\textwidth]{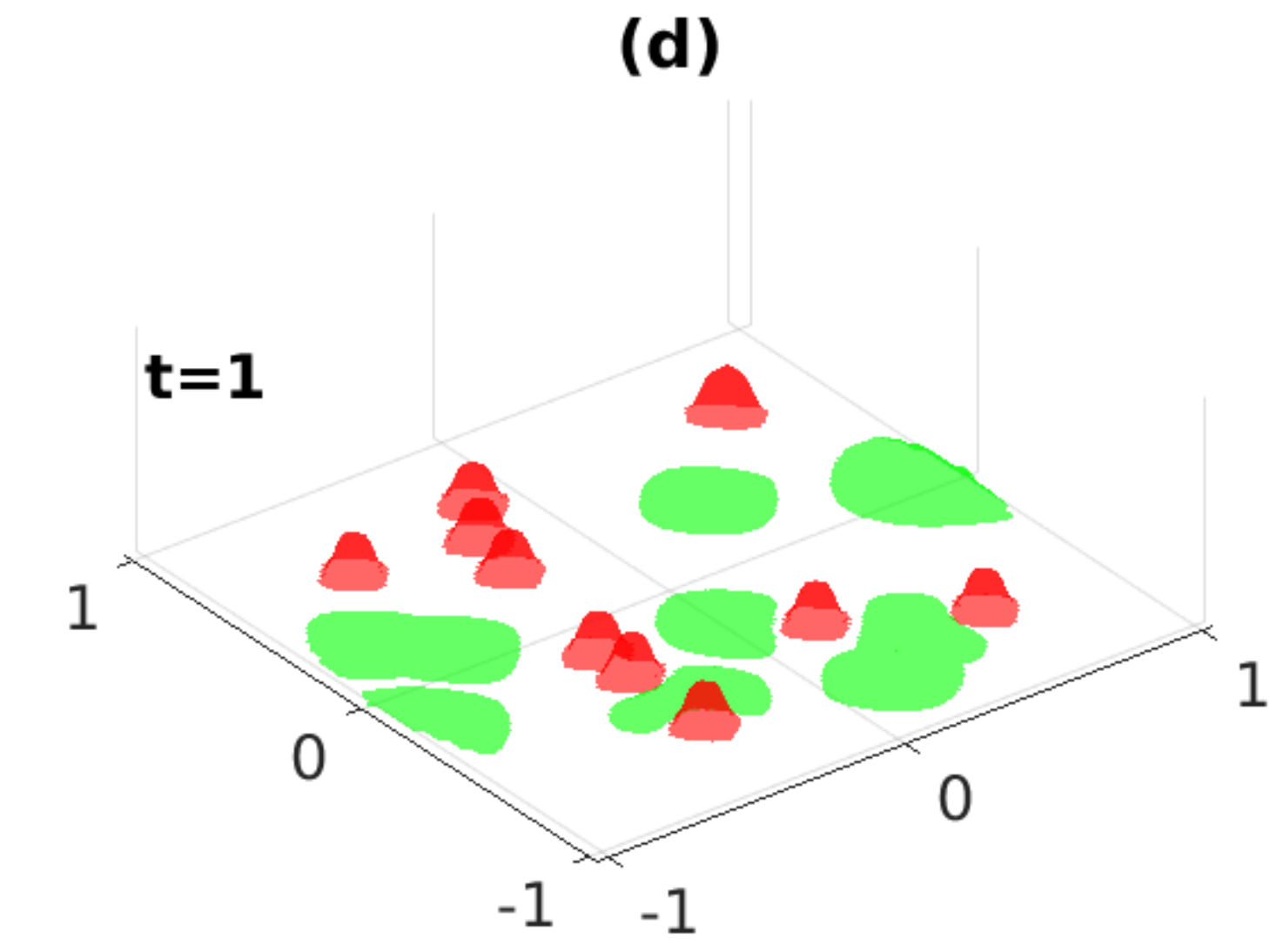}\includegraphics[width=0.33\textwidth]{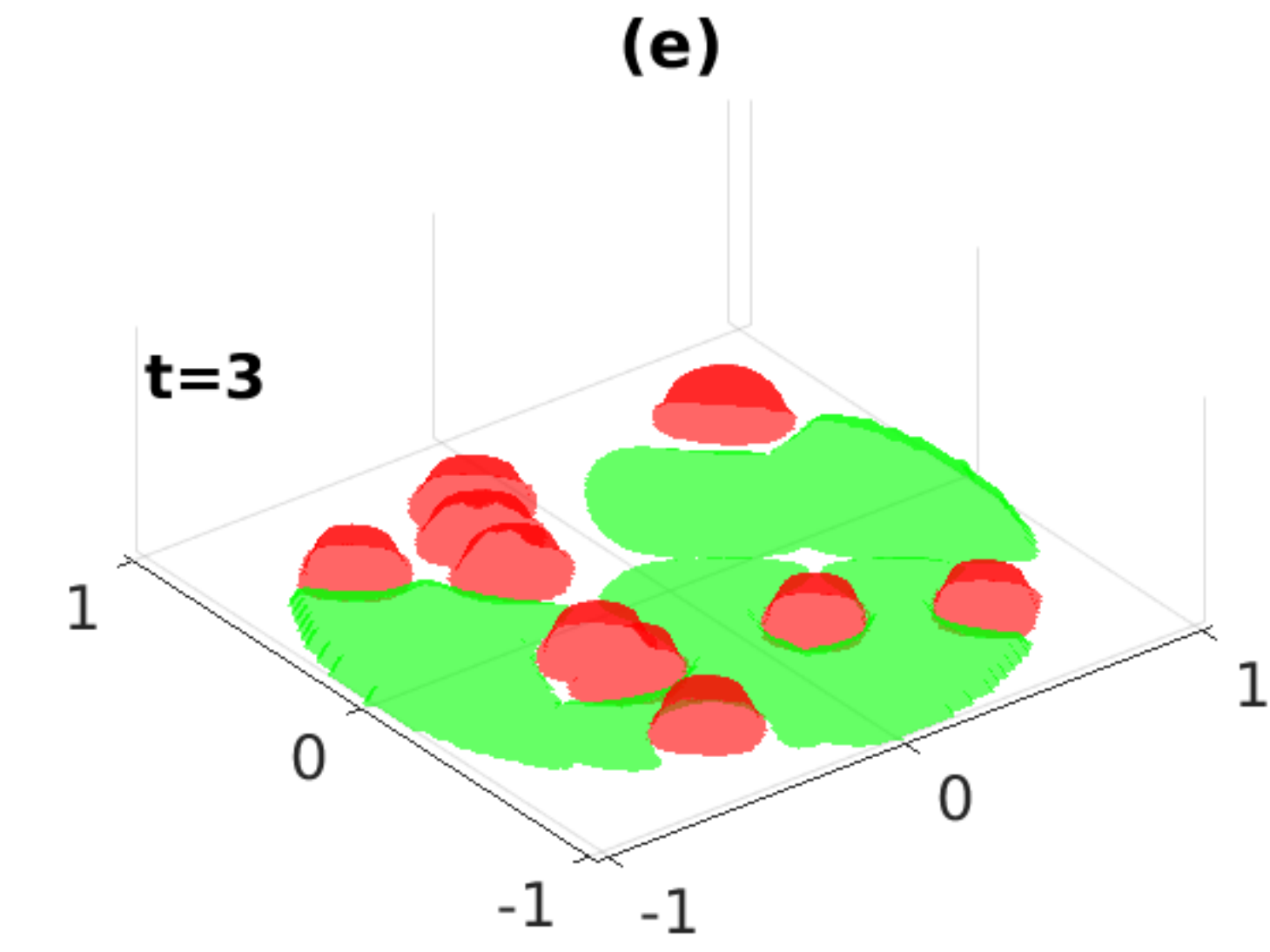}\includegraphics[width=0.33\textwidth]{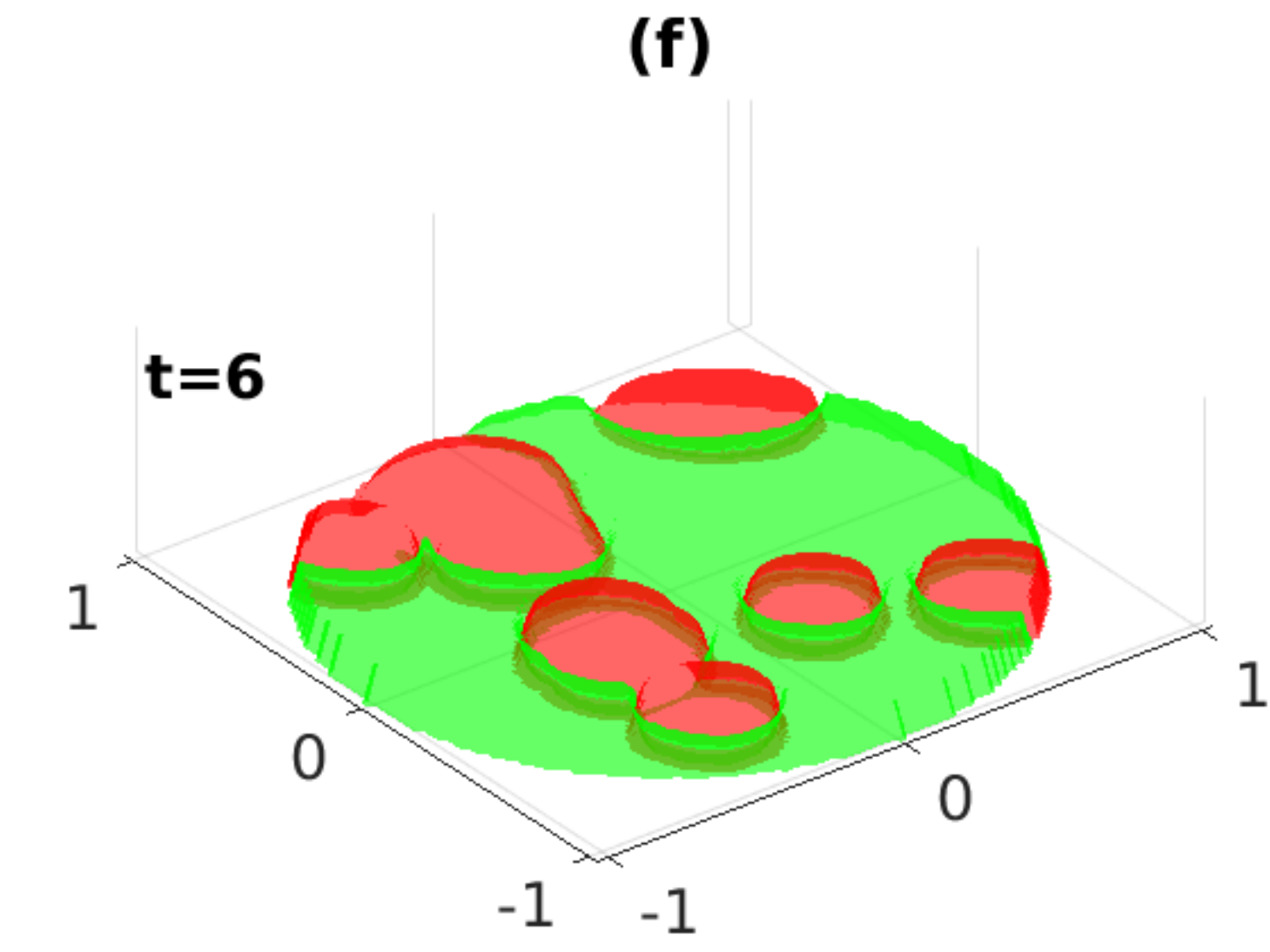}
		\end{center}
		\caption{\textit{Cell co-culture for species $ u_1 $ and $ u_2 $ over 6 days. Figure (a)-(c) corresponds to scenario 1  (i.e. with the parameters $d_1=2,d_2=2,\delta_1 =\delta_2=0$) while Figure (d)-(f) corresponds to scenario 2 (i.e. with $d_1=2,d_2=0.2,\delta_1 =\delta_2=0$). In both scenarios, the number of initial cluster and the cell total number are the same and follow \eqref{eq3.7} and the same uniform distribution. We plot the simulations for day $1,3 $ and day $ 6 $. Other parameters are given in Table \ref{TABLE1}. }}
		\label{FIG8}
	\end{figure}
	Now we plot the evolution of the total number and the population ratios for species $ u_1 $ and $ u_2 $ over 6 days.
	\begin{figure}[H]
		\begin{center}
			\includegraphics[width=0.33\textwidth]{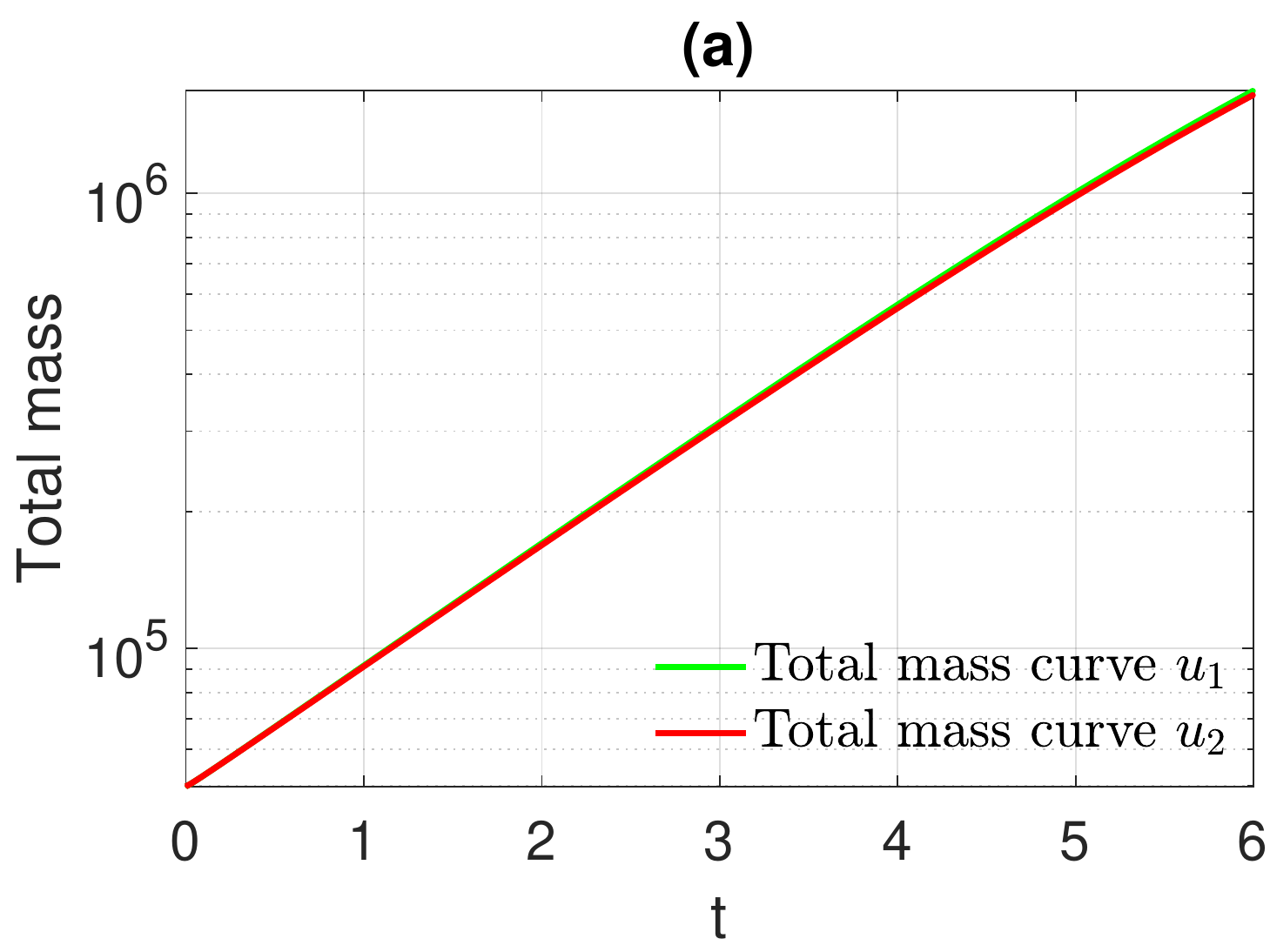}\includegraphics[width=0.33\textwidth]{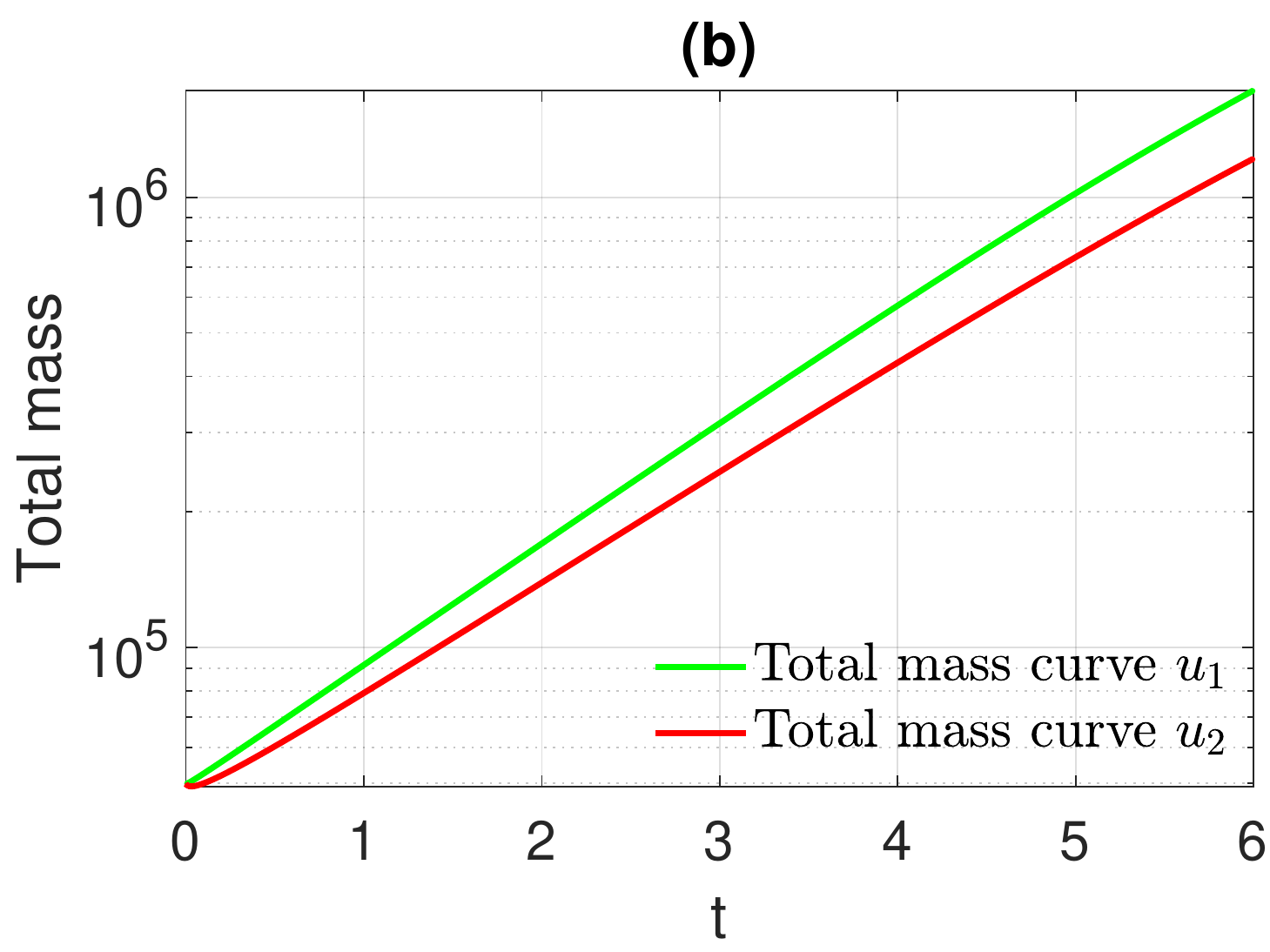}\includegraphics[width=0.33\textwidth]{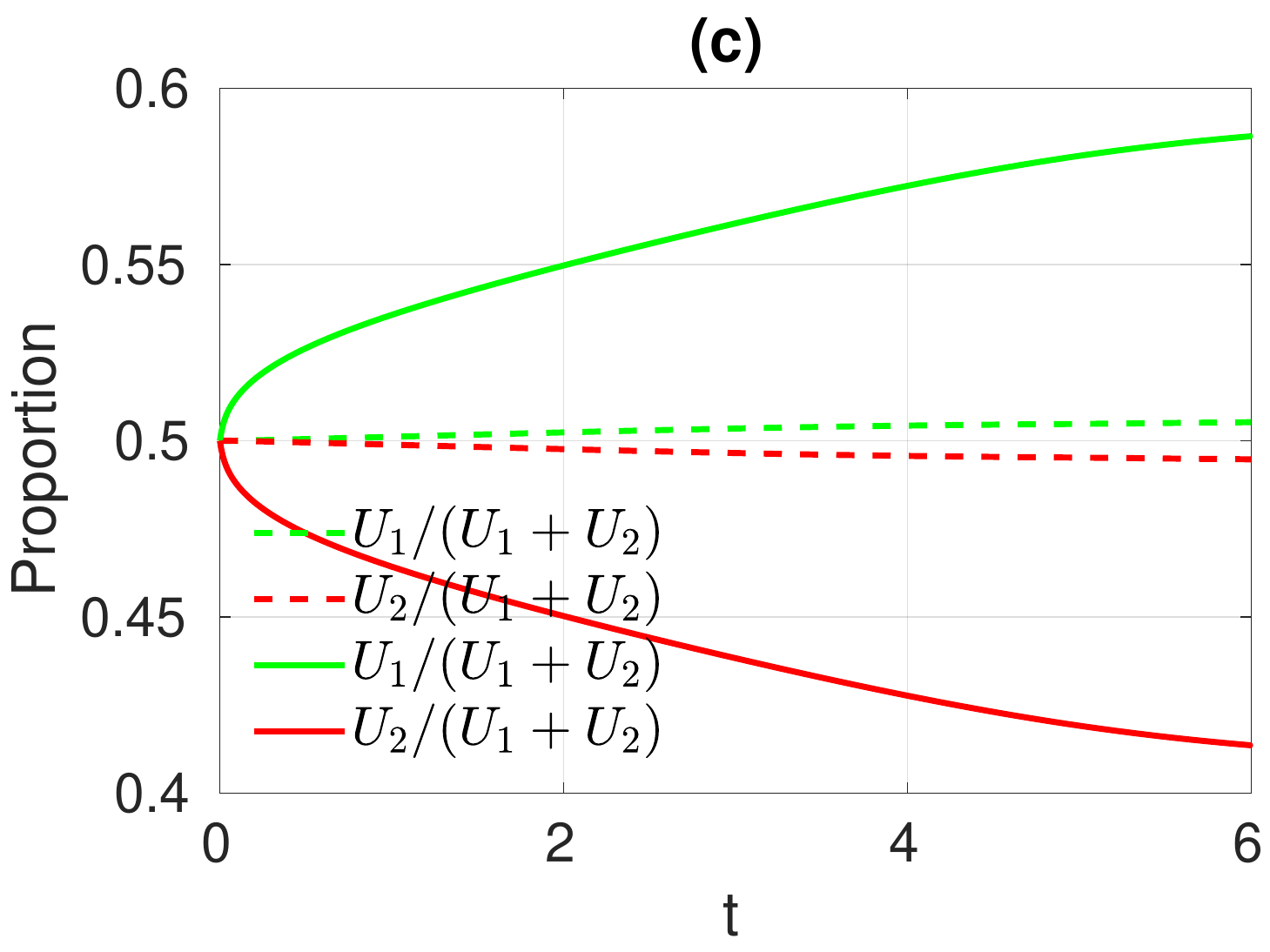}
		\end{center}
		\caption{\textit{Evolution of the  total number (in log scale) and its proportion for species $ u_1 $ and $ u_2 $ over 6 days. In Figure (a) we plot the total number of cells corresponding to the scenario 1. In Figure (b) we plot the total number of cells corresponding to the scenario 2.  In Figure (c) we plot the population ratios and the dashed lines corresponds to scenario 1 while the solid lines corresponds to scenario 2 in \ref{Table:Spatial_competition}. Other parameters are given in Table \ref{TABLE1} and \eqref{eq3.7}.}}
		\label{FIG9}
	\end{figure}
	
	The main result from Figure \ref{FIG9} is that the dispersion coefficient can have a great impact on the population ratio after 6 days.

	Next, we consider the following scenario where $u_1$ has the advantage in dispersion coefficient but is at a disadvantage induced by drug treatment. Therefore 
	\begin{table}[H]
		\centering
		\begin{tabular}{ccccc}
			\doubleRule
			Parameters     & $d_1$ & $  d_2$ & $\delta_1$ & $\delta_2$ \\
			\hline
			\noindent\textbf{scenario 3:}     & $ 2 $ & $ 0.2 $ & $\textbf{0.1}$ & $0$   \\
			\doubleRule
		\end{tabular}
		\caption{\textit{This scenario corresponds to the case where the species $u_1$ spreads faster than the species $u_2$. Moreover, due to a drug treatment, the mortality of the species $u_1$ is strictly positive while the mortality of the species $u_2$ is zero (i.e. the drug treatment does not affect the second species). In the context of cancer cell, the species $u_1$ would correspond to the sensitive cells to the drug while $u_2$ would correspond to the cell resistant to the drug treatment.  }}
		\label{Table:Spatial_competition_2}
	\end{table}
	\begin{figure}[H]
		\begin{center}
			\includegraphics[width=0.33\textwidth]{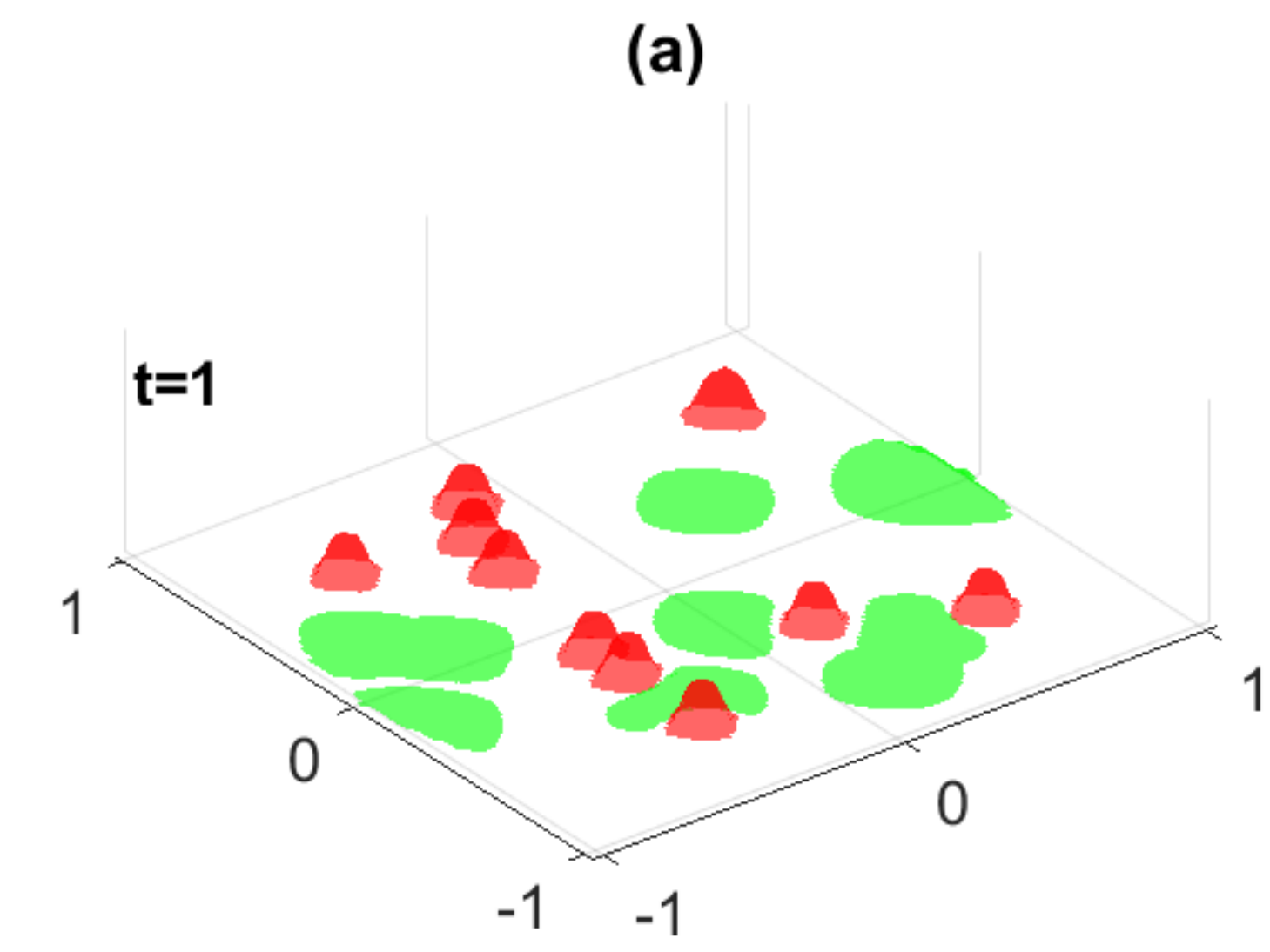}\includegraphics[width=0.33\textwidth]{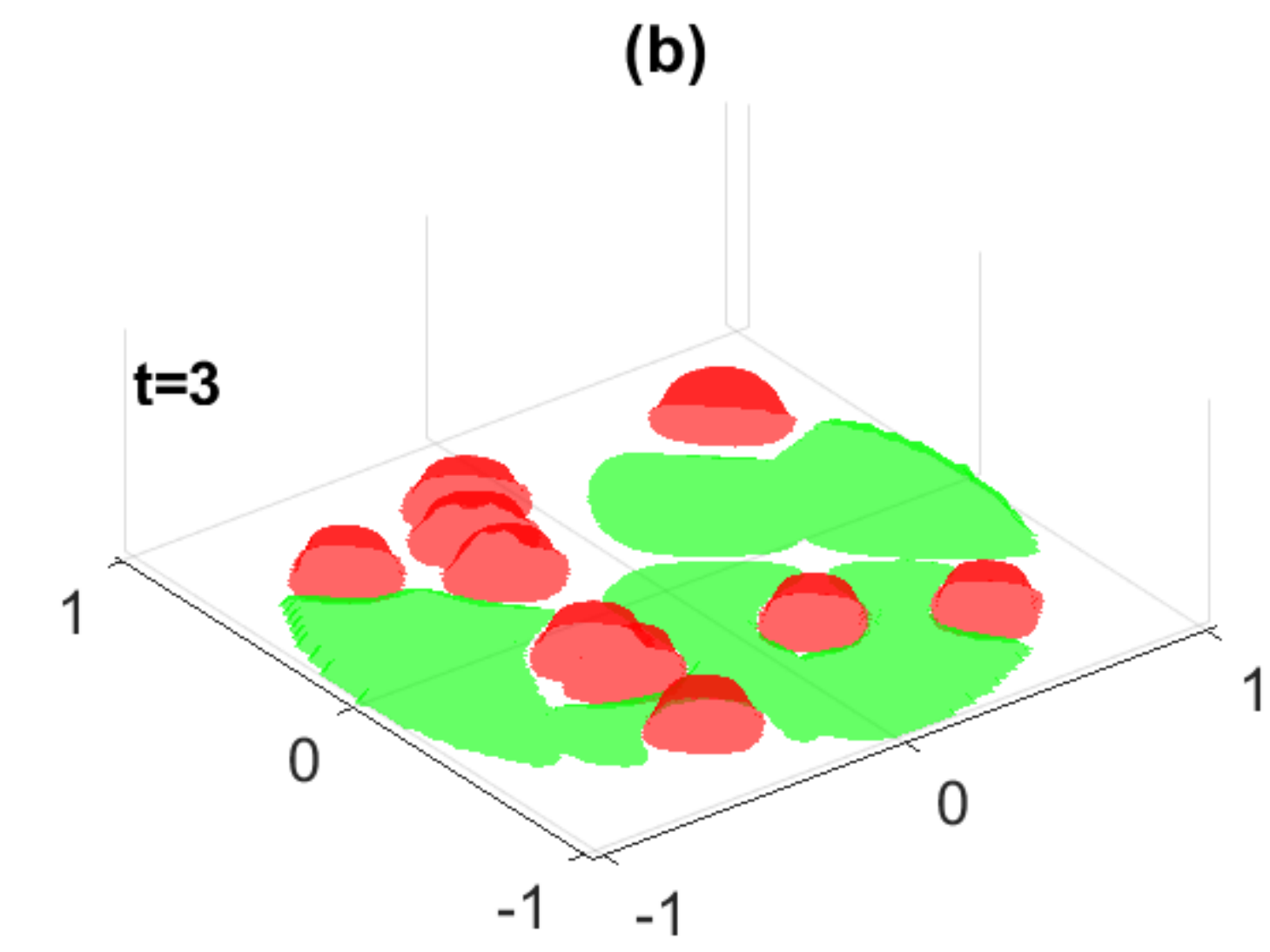}\includegraphics[width=0.33\textwidth]{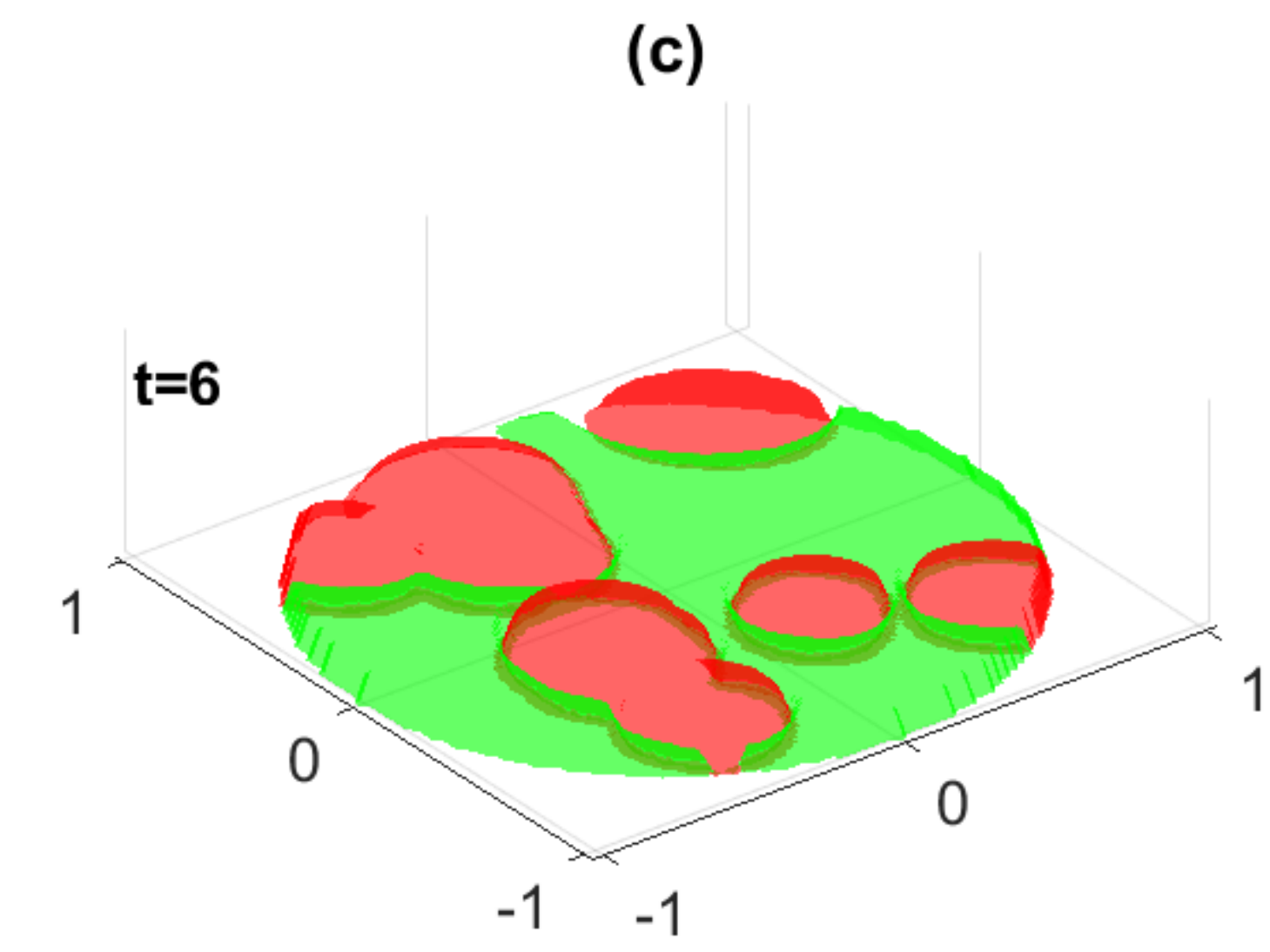}
		\end{center}
		\caption{\textit{Cell co-culture for species $ u_1 $ and $ u_2 $ over 6 days. Figure (a)-(c) corresponds to the scenario 3 with $d_1=2,d_2=0.2,\delta_1 =0.1,\delta_2=0$ in Table \ref{Table:Spatial_competition_2}. The number of initial cluster and cell total number follow \eqref{eq3.7}. Other parameters are given in Table \ref{TABLE1}. }}
		\label{FIG10}
	\end{figure}
	\begin{figure}[H]
		\begin{center}
			\includegraphics[width=0.45\textwidth]{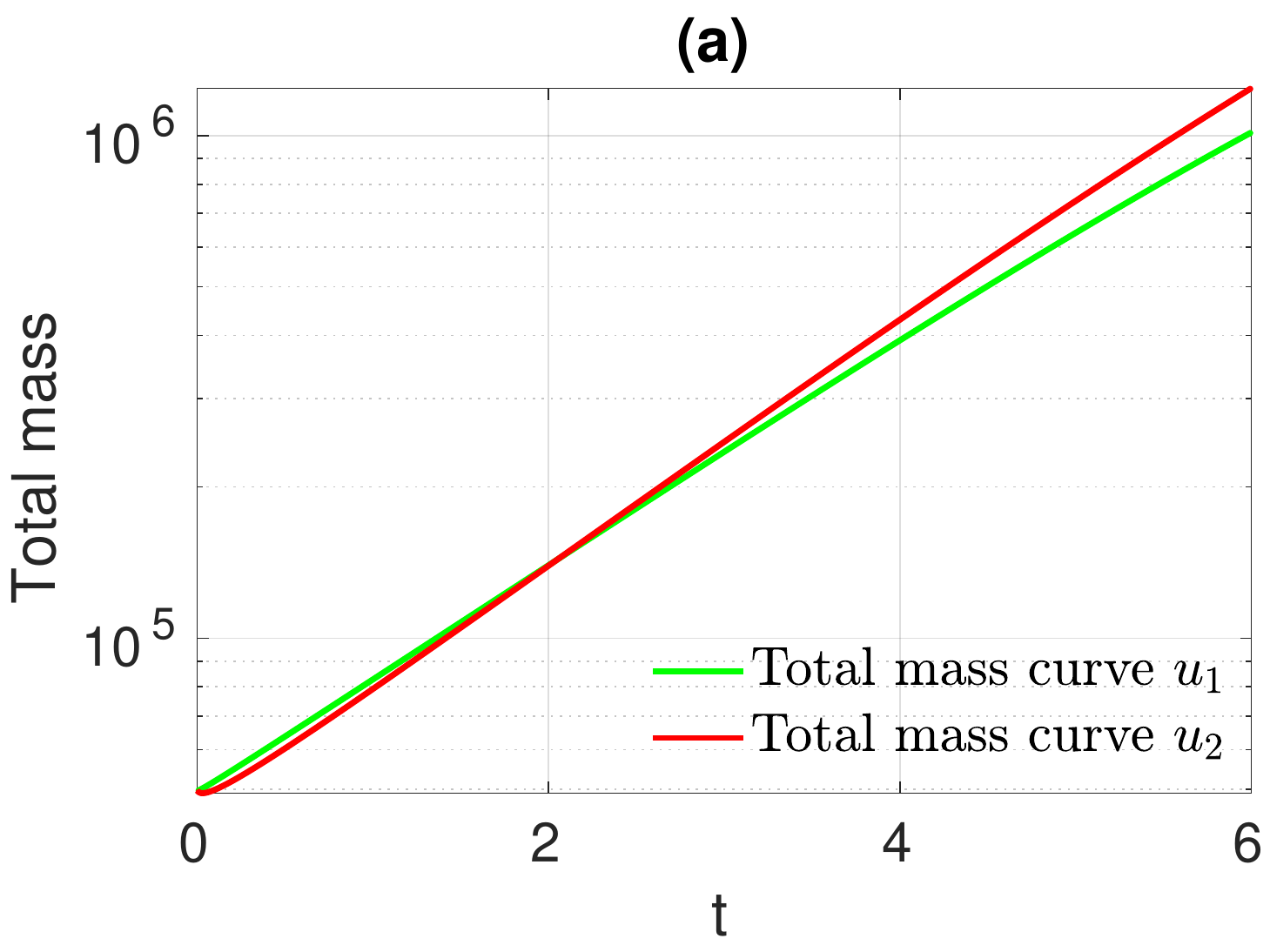}\includegraphics[width=0.45\textwidth]{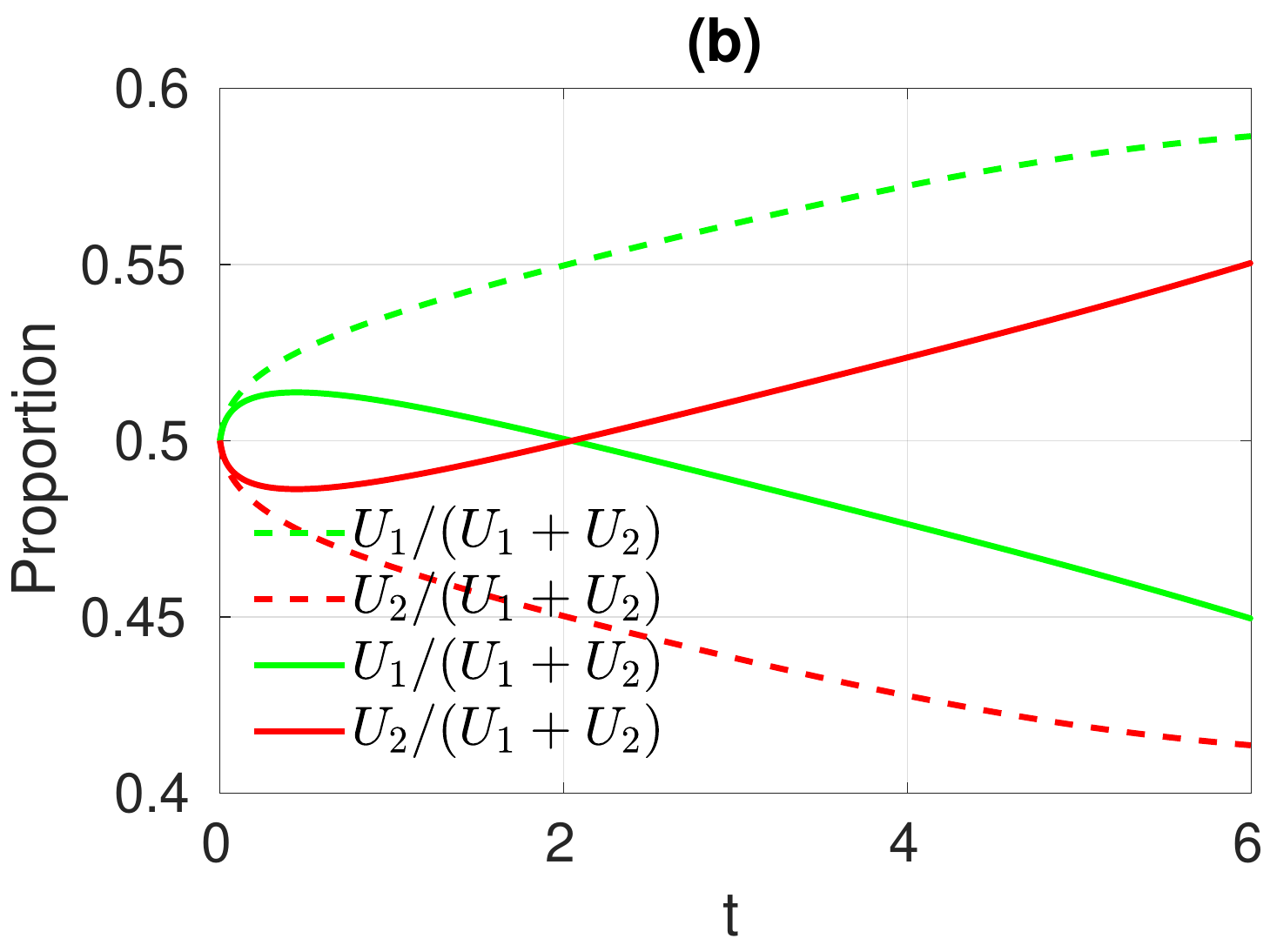}
		\end{center}
		\caption{\textit{Evolution of the  total number (in log scale) and its proportion for species $ u_1 $ and $ u_2 $ over 6 days. Figure (a) is the total number plot corresponding to the scenario 3 in Figure \ref{FIG10}. In Figure (b), the dashed lines corresponds to the population ratios of scenario 2 with $d_1=2,d_2=0.2,\delta_1 =0,\delta_2=0$ in Table \ref{Table:Spatial_competition} and while the solid lines corresponds to scenario 3 with $d_1=2,d_2=0.2,\delta_1 =0.1,\delta_2=0$ in Table \ref{Table:Spatial_competition_2}.  Other parameters are given in Table \ref{TABLE1} and \eqref{eq3.7}.}}
		\label{FIG11}
	\end{figure}
	By including now a drug treatment, we can see from Figure \ref{FIG10} and Figure \ref{FIG11} that between day $0$ and day $2$, the population $u_1$ dominates over $u_2$ thanks to a larger dispersion rate. After day $2$, since the drug is killing the cell from species $u_1$ while the drug has no effect on the species $u_2$, the species $u_2$ finally takes over the species $u_1$. It leads to a gradual increase in its proportion of the population ratio. 
	
	In the numerical simulations for the scenarios  $1$ and $2$ in Table \ref{Table:Spatial_competition}, we let the cell dynamics of the two species be almost equal. Thus the competition due to the cell dynamics is almost negligible. We have shown the dispersion coefficient of populations can have a great impact on the population ratio after 6 days.
	
	In the simulation for scenario 3 in Table \ref{Table:Spatial_competition_2}, we can observe that despite the competitive exclusion, a larger dispersion coefficient can lead to a short-term advantage in the population. In the long term, the competitive exclusion principle still dominates.

	\section{Conclusion and Discussion}
	From the experimental data in the work of Pasquier et al. \cite{Pasquier2011}, we modeled the mono-layer cell co-culture by a hyperbolic Keller-Segel equation \eqref{eq2.1}. We proved the local existence and uniqueness of solutions by using the notion of the solution integrated along the characteristics in Theorem \ref{thm:existence} and proved the conservation law in Theorem \ref{thm:conservation}. For the asymptotic behavior, we analyzed the problem numerically in Section 3.
	
	In Section 3.1 we discussed the competitive exclusion principle, indicating that the asymptotic behavior of the population depends only on the relationship between the steady states $\bar{P}_1$ and $\bar{P}_2$ (see \eqref{eq:newequilibrium} for definition) which is different from the ODE case. We found that except for the case $P_1=P_2$, the model with spatial segregation always exhibits an exclusion principle. 

Even though the long term dynamics of cell density is decided by the relative values of the equilibrium, the short term behavior need a more delicate description. We  studied two factors which may influence the population ratios. The first factor is the initial cell distribution, as measured by the initial total cell number and the law of initial distribution. We found that the impact of the initial distribution on the population ratio lies in the initial total cell number but not in the law of initial distribution.
	
	The second factor influencing the population ratio is the cell movement in space, as measured by the dispersion coefficient $d_i$. We can conclude that in the case of 
sparsely seeded initial distribution by the dispersion rate and cell dynamics. In the transient stage (i.e. before the dish is saturated), the dispersion rate $d_i$ are the dominant factor. Once the surface of the dish is saturated by cells, cell dynamics $u_i\,h(u_1,u_2)$ becomes the key factor. Note that the coefficients $a_{12},a_{21}$ do not play any role in the competition because of the segregation principle.

	We can briefly summarize the following main factors that can influence the population ratio in cell culture for model \eqref{eq2.1}:
	\begin{enumerate}
		\item[(a).] The difference of cell dynamics in the two species (internal factor): if the equilibrium $\bar{P}_1>\bar{P}_2$ (see \eqref{eq:newequilibrium} for definition), then $u_1$ will dominate, $u_2$ will die out (and vice-versa when $\bar{P}_1<\bar{P}_2$) (see Figures \ref{Figure_case1}-\ref{Figure_case234} and Table \ref{TABLE4});
		\item[(b).] If cells are densely seeded at the beginning, despite of competitive advantage, the dominant species can not take out its competitor in a short time (see Figures \ref{FIG6.1}-\ref{FIG6.2}). We also concluded that the law of initial distribution has almost no influence on the population ratio (see Figures \ref{FIG3}-\ref{FIG4});
		\item[(c).] If cells are sparsely seeded at the beginning, we need to distinguish the period of time needed for the cell to occupy the surface of the dish and  the time needed for each species to reach a saturation stage (see Figure \ref{FIG9} and Figure \ref{FIG11}).  
	\end{enumerate}
	
	\section{Appendix}
	\subsection{Invariance of domain $ \Omega $}\label{App:Invariance}
	In this section, we prove the invariance of domain $ \Omega $ for the characteristic equation.
	\begin{assumption}\label{ass:4.1}
		Let $ \Omega\subset \R^2 $ be an open bounded subset with $ \partial \Omega $  of class $ C^2 $.
	\end{assumption}
Since $ \Omega $ is a bounded domain of class $ C^2 $, there exists $U$ a neighborhood
of the boundary $\partial \Omega$ such that the distance function $x \to {\rm dist}(x, \partial \Omega):=\inf_{y \in \partial \Omega} \Vert x-y\Vert$ restricted to $U$ has the regularity $ C^2 $ (see Foote \cite[Theorem 1]{Foote1984}).  Furthermore, by Foote \cite[Theorem 1]{Foote1984} and Ambrosio \cite[Theorem 1 p.11]{Ambrosio2000}, we have the following properties for $ \Omega $.
	\begin{lemma}\label{lem4.2}
		Let Assumption \ref{ass:4.1} be satisfied. Then
		\begin{enumerate}
			\item[\rm(i).]  There exists a small neighborhood $ U $ of $ \partial \Omega $ with $ U \subset \overline{\Omega} $ such that, for every $ x \in U $ there is a unique projection $ P(x)  \in \partial \Omega$ satisfying
			${\rm dist}(x,P(x))={\rm dist}(x,\partial \Omega) $.   
			\item[\rm(ii).]  The distance function $x \mapsto \delta(x):={\rm dist}(x,\partial \Omega) $ is $ C^2 $ on $ U \backslash \partial \Omega $.
			\item[\rm(iii).] For any $ x\in U $, $ \nabla \delta(x) = -\nu(P(x)) $ 	where $ \nu(x) $ is the outward normal vector.
		\end{enumerate}
	\end{lemma}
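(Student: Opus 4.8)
The plan is to read all three statements off a single object: the \emph{normal tubular neighborhood} of $\partial\Omega$. Since $\Omega$ is bounded with $\partial\Omega$ of class $C^2$, the outward unit normal $y\mapsto \nu(y)$ is a $C^1$ vector field on the compact manifold $\partial\Omega$. I would introduce the normal map
\[ \Phi(y,t):=y-t\,\nu(y),\qquad (y,t)\in\partial\Omega\times\R, \]
which is $C^1$ because $\nu$ is $C^1$. Its differential at $t=0$ maps $T_y\partial\Omega\oplus\R$ isomorphically onto $\R^2$ (identically on the tangent factor, and $s\mapsto -s\,\nu(y)$ on the normal factor), so by the inverse function theorem $\Phi$ is a local $C^1$-diffeomorphism near each $(y,0)$. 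Using compactness of $\partial\Omega$ together with the uniform bound on the second fundamental form coming from the $C^2$ regularity, I would extract a single $\varepsilon>0$ for which $\Phi$ restricts to a $C^1$-diffeomorphism from $\partial\Omega\times(-\varepsilon,\varepsilon)$ onto an open neighborhood of $\partial\Omega$. I then set $U:=\Phi\bigl(\partial\Omega\times[0,\varepsilon)\bigr)$, the inward half-tube, so that $U\subset\overline{\Omega}$.

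Claim (i) follows from injectivity of $\Phi$ on this tube: every $x\in U$ is written uniquely as $x=y-t\,\nu(y)$ with $t\in[0,\varepsilon)$, and once $\varepsilon$ is taken below the reach dictated by the curvature bound the point $y$ strictly realises the distance, $\mathrm{dist}(x,\partial\Omega)=t=\delta(x)$, all competing boundary points being farther. Thus $P(x):=y$ is the unique nearest point. This is exactly the content I would cite from Ambrosio \cite[Theorem 1 p.11]{Ambrosio2000}, with Foote \cite[Theorem 1]{Foote1984} supplying the sharp regularity needed below.

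For (iii) I would use the first-variation characterisation of the gradient of a distance function at a point with a unique projection: for $x\in U\setminus\partial\Omega$,
\[ \nabla\delta(x)=\frac{x-P(x)}{|x-P(x)|}. \]
This comes from the two-sided estimate $\delta(x+h)\le |x+h-P(x)|$ and $\delta(x+h)\ge |x+h-P(x+h)|$, expanding both bounds to first order in $h$. Since by construction $x-P(x)=-\delta(x)\,\nu(P(x))$ with $|x-P(x)|=\delta(x)>0$, this gives $\nabla\delta(x)=-\nu(P(x))$, which is (iii). Then (ii) is immediate: the nearest-point map $P$ is $C^1$ on $U\setminus\partial\Omega$ (it is the first component of $\Phi^{-1}$), and $\nu$ is $C^1$, so $\nabla\delta=-\nu\circ P$ is $C^1$ and hence $\delta\in C^2(U\setminus\partial\Omega)$, matching the regularity already recorded before the lemma.

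I expect the main obstacle to be the uniform construction of the tube: producing a single $\varepsilon$ that simultaneously guarantees injectivity of $\Phi$ and the equality $\delta(\Phi(y,t))=t$ with $y$ the \emph{unique} minimiser. Both hinge on controlling how far the normal lines stay disjoint, governed by the reach of $\partial\Omega$; the $C^2$ hypothesis yields a bounded second fundamental form on the compact boundary and hence a strictly positive reach, but upgrading the local diffeomorphism to the global "strictly farther" comparison is the step that carries the real weight. Once the tube is in place, uniqueness of the projection, the gradient formula, and the promotion from $C^1$ to $C^2$ are routine.
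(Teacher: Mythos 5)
The paper offers no proof of this lemma at all: it is stated as a quotation of known results, with Foote \cite[Theorem 1]{Foote1984} and Ambrosio \cite[Theorem 1 p.11]{Ambrosio2000} doing the work, so there is no in-paper argument to compare against. Your proposal reconstructs precisely the argument behind those citations, and it is correct: the normal map $\Phi(y,t)=y-t\,\nu(y)$, the inverse function theorem plus compactness of $\partial\Omega$ to obtain a tube of uniform width on which $\Phi$ is a $C^1$-diffeomorphism, the first-variation squeeze giving $\nabla\delta(x)=(x-P(x))/|x-P(x)|=-\nu(P(x))$, and then the bootstrap $\nabla\delta=-\nu\circ P$ with both $\nu$ and $P$ of class $C^1$ to conclude $\delta\in C^2(U\setminus\partial\Omega)$. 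That last step is exactly Foote's device for recovering the derivative that the naive count (IFT applied to a $C^1$ map yields only a $C^1$ inverse) appears to lose, so your route is faithful to the cited source rather than genuinely different from it. Two refinements would close the gaps you yourself flag. First, the uniform-$\varepsilon$ injectivity does not require any quantitative curvature or reach estimate: if no such $\varepsilon$ existed, there would be pairs $(y_n,t_n)\neq(y_n',t_n')$ with $\Phi(y_n,t_n)=\Phi(y_n',t_n')$ and $t_n,t_n'\to 0$; by compactness of $\partial\Omega$ both sequences of base points accumulate, necessarily at a common point $y$, contradicting local injectivity of $\Phi$ near $(y,0)$. Second, the ``$y$ strictly realises the distance'' claim is cleanest via the Lagrange orthogonality condition rather than a comparison estimate: any nearest point $z\in\partial\Omega$ to $x=\Phi(y,t)$ satisfies $x-z\parallel\nu(z)$ and $|x-z|\le t<\varepsilon$, hence $x=\Phi\bigl(z,\pm\,\mathrm{dist}(x,\partial\Omega)\bigr)$, and injectivity of $\Phi$ on the tube forces $z=y$. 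With these two standard observations inserted, your sketch is a complete proof of (i)--(iii).
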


	We consider the following non-autonomous differential equation on $ \Omega $
	\begin{equation}\label{s1.1}
		\begin{cases}  
			x'(t)=f(t,x(t))&t>0\\
			x(0)=x_0\in \Omega.
		\end{cases}
	\end{equation}
	\begin{assumption}\label{ass:4.2}
		The vector field $ f:[0,\infty)\times \overline{\Omega} \to \R^2 $  is continuous and satisfies 
		\begin{equation}\label{s1.2}
			\nu(x)\cdot f(t,x)\leq 0,\quad \forall t>0,\,\forall x\in \partial \Omega.
		\end{equation}
		Moreover, for any $ T>0 $, there exists a constant $ K = K (T) $ such that vector field $ f $ satisfies
		\begin{equation}\label{s1.3}
			|f(t,x)-f(t,y)| \leq K |x-y |,\quad \forall x,y\in \overline{\Omega},\,t\in[0,T].
		\end{equation}
	\end{assumption}
	By \eqref{s1.3}, we have the existence and uniqueness of the solutions of \eqref{s1.1} and the solutions may eventually reach the boundary $\partial \Omega$ in finite time. We will prove that \eqref{s1.2} implies that the solutions of \eqref{s1.1} actually  stay in $ \Omega $  and can not attain  boundary $ \partial \Omega $ in finite time under Assumption \ref{ass:4.1}.
	\begin{theorem}
		Let Assumption \ref{ass:4.1} and \ref{ass:4.2} be satisfied. For any $ T>0 $, let $ x(t) $  be the solution of \eqref{s1.1} on $ [0,T] $. Then $ x(t) \in \Omega$ for any $ t\in [0,T] $. 
	\end{theorem}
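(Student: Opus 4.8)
The plan is to argue by contradiction and to control the distance to the boundary $\delta(x):=\mathrm{dist}(x,\partial\Omega)$ along the trajectory through a linear differential inequality. Suppose that the solution $x(\cdot)$ of \eqref{s1.1} leaves $\Omega$ on $[0,T]$, and set $t^*:=\inf\{t\in[0,T]:x(t)\in\partial\Omega\}$. By continuity of $x$ and since $x(0)=x_0\in\Omega$, we have $t^*>0$, $x(t)\in\Omega$ (i.e. $\delta(x(t))>0$) for $t<t^*$, and $x(t^*)\in\partial\Omega$, i.e. $\delta(x(t^*))=0$. Because $U$ from Lemma \ref{lem4.2} is a neighborhood of $\partial\Omega$ and $x(t^*)\in\partial\Omega$, there exists $t_1\in[0,t^*)$ with $x(t)\in U$ for all $t\in[t_1,t^*]$; on the half-open interval $[t_1,t^*)$ we moreover have $x(t)\in U\setminus\partial\Omega$.

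First I would differentiate $\delta$ along the flow on $[t_1,t^*)$, which is justified because $\delta$ is $C^2$ on $U\setminus\partial\Omega$ by Lemma \ref{lem4.2}(ii) and $x$ is differentiable as a solution of \eqref{s1.1}. Using Lemma \ref{lem4.2}(iii), namely $\nabla\delta(x)=-\nu(P(x))$, the chain rule gives $\tfrac{\dd}{\dd t}\delta(x(t))=-\nu(P(x(t)))\cdot f(t,x(t))$. Writing $p:=P(x(t))\in\partial\Omega$ and inserting $f(t,p)$, I split this as
\[
\tfrac{\dd}{\dd t}\delta(x(t))=-\nu(p)\cdot f(t,p)-\nu(p)\cdot\bigl(f(t,x(t))-f(t,p)\bigr).
\]
The first term is nonnegative by the boundary condition \eqref{s1.2} applied at $p\in\partial\Omega$. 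For the second term, since $|\nu(p)|=1$ and $|x(t)-p|=\delta(x(t))$ (as $p$ is the nearest-point projection, Lemma \ref{lem4.2}(i)), the Lipschitz bound \eqref{s1.3} yields $|\nu(p)\cdot(f(t,x(t))-f(t,p))|\le K|x(t)-p|=K\,\delta(x(t))$. Combining, I obtain the differential inequality $\tfrac{\dd}{\dd t}\delta(x(t))\ge -K\,\delta(x(t))$ on $[t_1,t^*)$.

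Finally I would integrate this inequality by Gronwall's lemma to get $\delta(x(t))\ge \delta(x(t_1))\,e^{-K(t-t_1)}$ for all $t\in[t_1,t^*)$. Letting $t\uparrow t^*$ and using continuity of $\delta\circ x$ gives $\delta(x(t^*))\ge\delta(x(t_1))\,e^{-K(t^*-t_1)}>0$, since $\delta(x(t_1))>0$ as $t_1<t^*$. This contradicts $\delta(x(t^*))=0$, so the solution never reaches $\partial\Omega$ and therefore $x(t)\in\Omega$ for all $t\in[0,T]$.

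The step I expect to require the most care is the localization: the distance function is only guaranteed to be $C^2$ on the one-sided collar $U\setminus\partial\Omega$ (Lemma \ref{lem4.2}), so the differential inequality cannot be run globally but only once the trajectory has entered $U$, which is why the first-hitting time $t^*$ and the auxiliary time $t_1$ are introduced. One should also verify that $P(x(t))$ stays well defined on $[t_1,t^*)$ and that the splitting is controlled using $|\nu|=1$; the remaining ingredients (chain rule and Gronwall's lemma) are routine.
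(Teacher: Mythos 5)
Your proposal is correct and follows essentially the same argument as the paper: contradiction at the first boundary-hitting time, differentiation of $\delta(x(t))$ via Lemma \ref{lem4.2}(iii) near the boundary, splitting off the boundary term controlled by \eqref{s1.2}, the Lipschitz bound \eqref{s1.3} giving $\tfrac{\dd}{\dd t}\delta(x(t))\ge -K\delta(x(t))$, and Gronwall to reach a contradiction. The only (harmless) difference is that you work on the half-open interval $[t_1,t^*)$ and pass to the limit $t\uparrow t^*$ by continuity, which is in fact slightly more careful than the paper's differentiation on the closed interval $[t^*-\theta,t^*]$, where $\delta$ is only guaranteed smooth off $\partial\Omega$.
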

	
	\begin{proof}
		We prove this theorem by contradiction.
		Let $ t^*\in (0,T] $ be the first time when $ x(t) $ reaches boundary $ \partial \Omega $, i.e.,
		\[ t^*=\inf \lbrace 0<t\leq T : \delta(x(t)) =0  \rbrace.\]
		We can find a $ \theta>0 $ such that, $ x(t)\in U \cap \overline{\Omega}$ for any $ t\in [t^*-\theta,t^*] $. Since $ t\to x(t) $ is $ C^1 $, the mapping $ t\mapsto\delta(x(t))$ is $ C^1 $ on $ [t^*-\theta, t^*] $. By  Lemma \ref{lem4.2} (iii), we  have
		\begin{equation}\label{s1.4}
			\frac{d}{dt} \delta(x(t)) =  x'(t)\cdot \nabla \delta(x(t)) =-f(t,x(t)) \cdot \nu (y(t)), 
		\end{equation}
		where $ \nu  $ is the outward normal vector and  $ y(t):= P_{\partial \Omega}(x(t)) $ is the unique projection of $ x(t) $ onto $ \partial \Omega $. By assumption \eqref{s1.2}, we have
		\[-f(t,x(t)) \cdot \nu (y(t))= \big(f(t,y(t))-f(t,x(t))\big) \cdot \nu (y(t)) - f(t,y(t))\cdot \nu (y(t))\geq \big(f(t,y(t))-f(t,x(t))\big) \cdot \nu (y(t)).\]
		 Hence \eqref{s1.4} becomes
		\begin{align*}
			\frac{d}{dt} \delta(x(t)) =&-f(t,x(t)) \cdot \nu (y(t))\\
			\geq& \big(f(t,y(t))-f(t,x(t))\big) \cdot \nu (y(t))\\
			\geq& -\left|f(t,y(t))-f(t,x(t))\right| |\nu (y(t))|\\
			\geq& -K|y(t)-x(t)|=-K \delta(x(t)),\quad t\in [t^*-\theta,t^*],
		\end{align*}
		which yields
		\[ \delta(x(t))\geq \delta(x(t^*-\theta)) e^{-K(t-t^*+\theta)},\quad \forall t\in [t^*-\theta, t^*], \]
		and $\delta(x(t^*-\theta))>0$ implies $ \delta(x(t^*))>0 $  which contradicts our assumption $\delta(x(t^*))=0$.
	\end{proof}
	\subsection{Proof of Theorem \ref{thm:existence}}\label{app:existence}
	\noindent\textbf{Solution integrated along the characteristics.} Let us temporarily suppose $ u\in C^1\left([0,T]\times\Omega\right) $, we can rewrite the first equation in \eqref{eq1.3} as
	\begin{align*}
		\partial _{t}u(t,x) -d\,\nabla u(t,x)\cdot\nabla P(t,x)&= 
		u(t,x)h(u(t,x))+d\,u(t,x)\Delta P(t,x)\\
		&=u(t,x)\left(h(u(t,x))+\frac{d}{\chi}(P(t,x)-u(t,x))\right).
	\end{align*}
	Moreover, if we differentiate the solution along the characteristic with respect to $ t $ then
	\begin{equation}\label{eq2.4}
		\begin{aligned}
			\frac{\dd }{\dd t} u(t,\Pi(t,0;x)) &=\partial_t u(t,\Pi(t,0;x))+\nabla u(t,\Pi(t,0;x)) \cdot \partial_t \Pi(t,0;x)\\
			&= \partial_t u(t,\Pi(t,0;x))-d\, \nabla u(t,\Pi(t,0;x)) \cdot \nabla P(t,\Pi(t,0;x))\\
			&= u(t,\Pi(t,0;x))\left(h(u(t,\Pi(t,0;x)))+\frac{d}{\chi}\left(P(t,\Pi(t,0;x))-u(t,\Pi(t,0;x))\right)\right).
		\end{aligned}
	\end{equation}
	The solution along the characteristics can be written as
	\vspace{-0.5em}
	\begin{equation*}
		u(t,\Pi(t,0;x))=u_0(x)\exp\left(\int_0^t h(u(l,\Pi(l,0;x)))+\frac{d}{\chi}\big(P(l,\Pi(l,0;x))-u(l,\Pi(l,0;x))\big)\dd l\right).
	\end{equation*}
	For the simplicity of notation, we let $d=\chi=1$ in our following discussion and define $w(t,x):=u(t,\Pi(t,0;x))$. We construct the following Banach fixed point problem for the pair $ (w,P) $. For each $ (w,P) $, we let
	\begin{equation}\label{eq:w^1}
		w^{1}(t,x)=u_0(x)\exp \Big(\int_{0}^{t}F(w^{}(l,x))+P^{}(l,\Pi^{}(l,0;x))\dd l\Big).
	\end{equation}
	where we set $ F(u)=h(u)-u $ for any $ u\geq 0 $ and we define
	\begin{equation}\label{eq:fixpoint}
		\mathcal T \begin{pmatrix}
			w^{}(t,x)\\
			P^{}(t,x)
		\end{pmatrix}:=\begin{pmatrix}
			w^{1}(t,x) \\
			(I-\Delta)^{-1} w^{1}(t,\Pi^{}(0,t;x))
		\end{pmatrix}=\begin{pmatrix}
			w^{1}(t,x)\\
			P^{1}(t,x)
		\end{pmatrix},
	\end{equation}
	where $ (I-\Delta)^{-1} $ is the resolvent of the Laplacian operator with Neumann boundary condition. 
	
	We define
	\begin{gather}\label{eq:X_tauY_tau}
		\begin{aligned}
			X^\tau&:=C^0\big([0,\tau],C^0(\overline{\Omega})\big),\quad Y^\tau:=C^0\big([0,\tau],C^{1}(\overline{\Omega})\big),\\
			\tilde X^\tau&:=\left\lbrace w \in C^0\big([0,\tau],C^0(\overline{\Omega})\big)\;\Big|\; w\geq 0 ,\sup_{t\in [0,\tau]}\|w(t,\cdot)\|_{W^{1,\infty}(\Omega)}\leq C_1  \right\rbrace,\\
			\tilde Y^\tau&:=\left\lbrace P\in C^0\big([0,\tau],C^{1}(\overline{\Omega})\big)\; \Big|\; \sup_{t\in [0, \tau]}\big\|P(t,\cdot)\big\|_{W^{2,\infty}(\Omega)}\leq C_2  \right\rbrace,
		\end{aligned}	
	\end{gather}
	where $ C_i,i=1,2 $ are two constants to be  fixed later. We also set
	\begin{equation*}
		\begin{aligned}
			Z^\tau&:=X^\tau\times Y^\tau, & \tilde Z^\tau&:=\tilde X^\tau\times \tilde Y^\tau.
		\end{aligned}
	\end{equation*}
	Notice $ \tilde Z^\tau $ is a complete metric space for the distance induced by the norm $\left(\|\cdot \|_{X^\tau},\|\cdot \|_{Y^\tau}\right)$. For simplicity, we denote $ \|\cdot\|_{C^{\alpha,k}} := \|\cdot\|_{C^{\alpha,k}(\Omega)} $ and $ \|\cdot\|_{W^{k,\infty}} := \|\cdot\|_{W^{k,\infty}(\Omega)}$ for $ \alpha\in (0,1],k\in \mathbb N_+ $.
	
	\begin{theorem}[Existence and uniqueness of solutions]
		For any initial value $u_0\in W^{1,\infty}(\Omega)$ and $ u_0\geq 0 $, for any $ C_1$, $ C_2  $ large enough in \eqref{eq:X_tauY_tau}, there exists  $\tau=\tau(C_1,C_2)>0$ such that the mapping $\mathcal T$ has a unique fixed point in $\tilde Z^\tau$. 
	\end{theorem}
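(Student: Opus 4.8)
The plan is to verify the hypotheses of the Banach fixed point theorem for $\mathcal T$ on the complete metric space $\tilde Z^\tau$, exploiting the fact that the flow $\Pi$ appearing in \eqref{eq:w^1}--\eqref{eq:fixpoint} is itself slaved to the second component $P$. Indeed, for any $P\in\tilde Y^\tau$ the field $-\nabla P(t,\cdot)$ is Lipschitz in $x$ uniformly in $t$, with constant controlled by $C_2$, so Cauchy--Lipschitz produces a unique flow $\{\Pi(t,s;\cdot)\}$, which by the invariance result of Appendix \ref{App:Invariance} maps $\overline\Omega$ into itself; moreover $x\mapsto\Pi(t,s;x)$ is bi-Lipschitz with constant $\le e^{C_2|t-s|}$. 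This makes $\mathcal T$ well defined on $\tilde Z^\tau$. I would first fix the radii: set $C_1:=2\|u_0\|_{W^{1,\infty}}$ and then choose $C_2$ from the elliptic estimate of Lemma \ref{lem:elliptic} (composed with the embedding $C^{2,\alpha}(\overline\Omega)\hookrightarrow W^{2,\infty}(\Omega)$) applied to data of size $C_1$. With $C_1,C_2$ frozen, all the nonlinear quantities $F(w),F'(w),\nabla P,\nabla^2P$ are bounded by a constant $M=M(C_1,C_2)$ on $\tilde Z^\tau$, and $\tau$ will be the only remaining free parameter, to be sent to $0$.

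The first step (stability) is to check $\mathcal T(\tilde Z^\tau)\subset\tilde Z^\tau$. Positivity $w^1\ge0$ is immediate from $u_0\ge0$ and the exponential in \eqref{eq:w^1}. For the $W^{1,\infty}$ bound on $w^1$ I would estimate $\|w^1(t,\cdot)\|_{C^0}\le\|u_0\|_{C^0}e^{M\tau}$ and, differentiating \eqref{eq:w^1} in $x$, control $\nabla w^1$ by $\nabla u_0$, by $F'(w)\nabla w$ and by $\nabla[P(l,\Pi(l,0;x))]=(\nabla P)(l,\Pi)\,\partial_x\Pi$ with $\|\partial_x\Pi\|\le e^{M\tau}$; this yields $\|w^1(t,\cdot)\|_{W^{1,\infty}}\le\|u_0\|_{W^{1,\infty}}e^{M\tau}\bigl(1+\tau\,\Phi(M,C_1)\bigr)$, which is $\le C_1$ once $\tau$ is small enough. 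For $P^1$ I would note that $w^1(t,\Pi(0,t;\cdot))$ has $C^{0,\alpha}$ norm comparable to $\|w^1(t,\cdot)\|_{C^{0,\alpha}}\lesssim C_1$ (composition with a bi-Lipschitz map), so Lemma \ref{lem:elliptic} and the Sobolev embedding give $\|P^1(t,\cdot)\|_{W^{2,\infty}}\le C_2$ by the very choice of $C_2$.

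The second step (contraction) is the heart of the argument, and I would carry it out in the weaker norm of $X^\tau\times Y^\tau$. Taking two inputs $(w_a,P_a),(w_b,P_b)$ with flows $\Pi_a,\Pi_b$, I would first apply Gronwall's inequality to the characteristic ODE to get $\sup_{t,s,x}|\Pi_a(t,s;x)-\Pi_b(t,s;x)|\le C(\tau)\,\tau\,\|P_a-P_b\|_{Y^\tau}$, since the two flows are driven by fields differing by $\nabla(P_a-P_b)$. Using $|e^A-e^B|\le e^{\max(A,B)}|A-B|$ together with the Lipschitz bound on $F$ and the splitting $P_a\circ\Pi_a-P_b\circ\Pi_b=(P_a-P_b)\circ\Pi_a+(P_b\circ\Pi_a-P_b\circ\Pi_b)$, every term in the exponent difference carries a factor $\tau$, so $\|w_a^1-w_b^1\|_{X^\tau}\le\tau\,\Psi(M,C_1)\bigl(\|w_a-w_b\|_{X^\tau}+\|P_a-P_b\|_{Y^\tau}\bigr)$. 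Feeding this into $P^1=(I-\Delta)^{-1}[\,w^1(t,\Pi(0,t;\cdot))\,]$ and using the boundedness of the resolvent from $C^0(\overline\Omega)$ into $C^1(\overline\Omega)$ (again Lemma \ref{lem:elliptic}), together with the flow estimate above to handle $\Pi_a(0,t;\cdot)-\Pi_b(0,t;\cdot)$, I would obtain the same type of bound for $\|P_a^1-P_b^1\|_{Y^\tau}$. Collecting, the Lipschitz constant of $\mathcal T$ is $L(\tau)\to0$ as $\tau\to0$, so for $\tau$ small it is $<1$ and the Banach fixed point theorem yields a unique fixed point in $\tilde Z^\tau$.

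I expect the main obstacle to be the coupled control of the flow $\Pi$ and the compositions $P\circ\Pi$, $w^1\circ\Pi$: in the stability step one must keep the Jacobian $\partial_x\Pi$ bounded in the $W^{1,\infty}$ estimate, and in the contraction step the differences $\Pi_a-\Pi_b$ mix both unknowns and must be reinjected into the exponent and into the elliptic argument. The two regularity inputs that make everything close are the $C^{0,\alpha}\to C^{2,\alpha}$ gain of Lemma \ref{lem:elliptic}, which produces the $W^{2,\infty}$ bound for $P^1$, and the smallness of $\tau$, which is what simultaneously forces both the self-mapping property and the contraction.
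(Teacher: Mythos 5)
Your proposal follows essentially the same route as the paper's own proof: the identical Banach fixed point argument for $\mathcal T$ on $\tilde Z^\tau$, with the self-mapping bounds obtained from Gr\"onwall control of the characteristic flow plus the Schauder estimate of Lemma \ref{lem:elliptic}, and the contraction carried out in the weaker $X^\tau\times Y^\tau$ norm via the flow-difference estimate, the exponential inequality, and the splitting $P_a\circ\Pi_a-P_b\circ\Pi_b=(P_a-P_b)\circ\Pi_a+(P_b\circ\Pi_a-P_b\circ\Pi_b)$. The only small correction: the $C^0(\overline{\Omega})\to C^1(\overline{\Omega})$ boundedness of $(I-\chi\Delta)^{-1}$ used in your contraction step is not given by Lemma \ref{lem:elliptic} (which needs H\"older data); the paper derives it from $L^p$ elliptic estimates combined with Morrey's embedding $W^{2,4}(\Omega)\hookrightarrow C^{1,\frac12}(\overline{\Omega})$, and that is the tool you should invoke there.
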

	\begin{proof}
		For any positive initial value $ u_0\in  W^{1,\infty}(\Omega) $ and $ r>0 $, we fix $ C_1 $ to be a constant such that $ 4\|u_0 \|_{W^{1,\infty}} \leq C_1 $ and $ C_2 $ is a constant defined in \eqref{eq:def_C2} later in the proof.	
		
		We also denote 
		\[\begin{pmatrix}
		w^0\\P^0
		\end{pmatrix}= \begin{pmatrix}
		u_0\\
		(I-\Delta)_{\mathcal{N}}^{-1} u_0
		\end{pmatrix} \]
		and let $\overline{B_{\tilde Z^\tau}}\left(\begin{pmatrix}
		w^0\\
		P^0
		\end{pmatrix},r\right)$ be the closed ball centered at $ \begin{pmatrix}
		w^0\\
		P^0
		\end{pmatrix} $ with radius $ r $ in
		$ \tilde Z^\tau=\tilde X^\tau\times \tilde Y^\tau $ with usual product norm 
		\[ \left\Vert\begin{pmatrix}
		w\\P
		\end{pmatrix}\right\Vert_{\tilde Z^\tau} :=\|w \|_{X^\tau}+\|P \|_{Y^\tau}\]
and we set
		\[  
		\kappa := \left\|  \begin{pmatrix}
		w^0\\
		P^0
		\end{pmatrix}\right\|_{\tilde{Z}^\tau}+r. 
		\]
Suppose $\begin{pmatrix}
		w^{}\\P^{}
		\end{pmatrix}\in \overline{B_{Z^\tau}}\left(\begin{pmatrix} w^0\\P^0 \end{pmatrix},r\right)$,  we need to prove that there exits a $\tau$ small enough  such that the following properties hold
		\begin{enumerate}
			\item[(a).] For any $t\in[0,\tau]$, $\left(w^{1}(t,\cdot),P^{1}(t,\cdot)\right) $ in \eqref{eq:w^1} and \eqref{eq:fixpoint} belong to $ W^{1,\infty}(\Omega)\times W^{2,\infty}(\Omega) $ and their norms satisfy 
			\begin{align}
				\sup_{t\in[0,\tau]} \| w^{1}(t,\cdot)\|_{W^{1,\infty}} \leq C_1,\label{condition1aw}\\
				\sup_{t\in[0,\tau]} \| P^{1}(t,\cdot)\|_{W^{2,\infty}} \leq C_2.\label{condition1aP}
			\end{align}
			\item[(b).]  Moreover, we have
			\begin{align}
				\|w^{1}-w^0\|_{X^\tau}&\leq \frac{r}{2}，\label{condition1bw}\\
				\|P^{1}-P^0\|_{Y^\tau}&\leq \frac{r}{2}.\label{condition1bP}
			\end{align}
		\end{enumerate}
		Moreover, we plan to show that the mapping is a contraction: there exists a $ \theta \in (0,1) $ such that for any $ \begin{pmatrix}
		\tilde{w}^{}\\
		\tilde{P}^{}
		\end{pmatrix},\begin{pmatrix}
		w^{}\\
		P^{}
		\end{pmatrix} \in \overline{B_{\tilde Z^\tau}}\bigg(\begin{pmatrix}
		w^0\\
		P^0
		\end{pmatrix},r\bigg) $ we have
		\begin{equation}\label{condition2}
			\left\| \mathcal T \begin{pmatrix}
				\tilde{w}^{}\\
				\tilde{P}^{}
			\end{pmatrix} -\mathcal T \begin{pmatrix}
				w^{}\\
				P^{}
			\end{pmatrix} \right\|_{\tilde Z^\tau}\leq \theta 	\left\| \begin{pmatrix}
				\tilde{w}^{}\\
				\tilde{P}^{}
			\end{pmatrix} - \begin{pmatrix}
				w^{}\\
				P^{}
			\end{pmatrix} \right\|_{\tilde Z^\tau}	.
		\end{equation}
		\noindent\textbf{Step 1.} We show that there exists a $\tau $ small enough such that for any $ (w^{},P^{})\in \tilde{X}^\tau\times \tilde{Y}^\tau $ then 
		\[ \sup_{t\in [0,\tau]} \| w^{1}(t,\cdot)\|_{W^{1,\infty}} \leq C_1, \]
		where $ w^1 $ is defined in \eqref{eq:w^1}.\medskip\\
		Indeed, since $\nabla P^{}(t,\cdot)$ is Lipschitz continuous, then $x\to \Pi^{}(t,0,x)$ is also Lipschitz continuous. Since $\Pi^{}(t,0;\cdot)$ maps $\Omega $ into $\Omega$, we have
		\begin{align*}
			\|P^{}(t,\Pi^{}(t,0;\cdot)) \|_{W^{1,\infty}}&\leq\| P^{}(t,\Pi^{}(t,0;\cdot
			))\|_{L^\infty}+\| \nabla P^{}(t,\cdot)\|_{L^\infty}\| \Pi^{}(t,0;\cdot
			)\|_{W^{1,\infty}} \\
			&\leq \|P^{}(t,\cdot)\|_{W^{1,\infty}}\max\{\|\Pi^{}(t,0;\cdot)\|_{W^{1,\infty}},1\}.
		\end{align*}
		For any $t\in [0,\tau]$, we let $ \tilde{F} := \sup_{u\in [0,\kappa]} \left\{|F(u)|+|F'(u)| \right\} $. By the definition of $w^1$ in \eqref{eq:w^1}, we have
		\begin{align}
			\|w^{1}&(t,\cdot) \|_{W^{1,\infty}} \notag\\
			&\leq \| u_0\|_{W^{1,\infty}} \left\|\exp\left\{\int_0^t F(w(l,\cdot))+P(l,\Pi(l,0,\cdot)) \dd l \right\}\right\|_{W^{1,\infty}}\notag\\
			&\leq \| u_0\|_{W^{1,\infty}} \left\|\exp\left\{\int_0^t F(w(l,\cdot))+P(l,\Pi(l,0,\cdot)) \dd l \right\}\right\|_{L^{\infty}}\notag\\
			&\quad \times \left(1+\int_0^t \|F(w^{}(l,\cdot)) \|_{W^{1,\infty}} +\|P^{}(l,\Pi^{}(l,0,\cdot)) \|_{W^{1,\infty}} \dd l \right)\notag\\
			&\leq \| u_0\|_{W^{1,\infty}} \exp \left\{\int_0^t \|F(w^{}(l,\cdot)) \|_{L^{\infty}} +\|P^{}(l,\Pi^{}(l,0,\cdot)) \|_{L^{\infty}} \dd l \right\}\notag\\
			&\quad \times \Big(1+\tau \tilde{F}\max\{\sup_{l\in [0,\tau]}\|w^{}(l,\cdot) \|_{W^{1,\infty}},1\}+\tau \|P^{}(l,\cdot)\|_{W^{1,\infty}}\max\{\|\Pi^{}(l,0,\cdot)\|_{W^{1,\infty}},1\} \Big)\notag\\
			&\leq \| u_0\|_{W^{1,\infty}} e^{\tau \left(\tilde{F}+\kappa\right) }\Big(1+\tau \tilde{F}\max\{C_1,1\}+\tau \kappa \max\{\|\Pi^{}(l,0,\cdot)\|_{W^{1,\infty}},1\} \Big)\label{eq:w1b}
		\end{align}
		Next we estimate $ \max\left\{\sup_{l\in [0,\tau]}\|\Pi^{}(l,0,\cdot)\|_{W^{1,\infty}},1\right\} $. We have for any $t,s\in [0,\tau]$
		\begin{equation*}
			\Pi^{}(t,s;x) =x-\int_s^t \nabla P^{}(l,\Pi^{}(l,s;x))\dd l.
		\end{equation*}
		Since $ \Omega$ is the unit open disk, $ \| x\|_{W^{1,\infty}(\Omega)}=1+\sqrt{2}\leq 3 $.  We can obtain the following estimate
		\begin{equation*}
			\begin{aligned}
				\| \Pi^{}(t,s;\cdot) \|_{W^{1,\infty}} &\leq 3+ \int_s^t \| \nabla P^{}(l,\Pi^{}(l,s;\cdot))\|_{W^{1,\infty}} \dd l\\
				&\leq 3 + \sup_{l\in [s,t]}\| \nabla P^{}(l,\cdot)\|_{W^{1,\infty}} \int_s^t  \max\left\{\|\Pi^{}(l,s;\cdot) \|_{W^{1,\infty}},1\right\} \dd l\\
				&\leq 3 + C_2 \int_s^t  \max\{\|\Pi^{}(l,s;\cdot) \|_{W^{1,\infty}},1\} \dd l，
			\end{aligned}
		\end{equation*}
		Thanks to Gr\"onwall's inequality,  we have
		\begin{equation}\label{eq:boundforPi}
			\sup_{t,s\in [0,\tau]}\| \Pi^{}(t,s;\cdot) \|_{W^{1,\infty}}\leq 3 e^{\tau C_2}. 
		\end{equation}
		Substituting the \eqref{eq:boundforPi} into \eqref{eq:w1b} yields
		\begin{equation*}
			\|w^{1}(t,\cdot) \|_{W^{1,\infty}} \leq \| u_0\|_{W^{1,\infty}} e^{\tau \left(\tilde{F}+\kappa\right)}\Big(1+\tau \tilde{F}\max\{C_1,1\}+3\tau \kappa e^{\tau C_2}\Big) . 
		\end{equation*} 
		Since $ C_1 \geq 4 \| u_0\|_{W^{1,\infty}} $, we can choose $ \tau\leq \min\left\{\frac{\ln 2}{\tilde F +\kappa}, \frac{1}{\tilde{F}\max\{C_1,1\}+3\kappa e^{C_2}},1 \right\} $ and we obtain 
		\begin{equation}\label{eq:w1bb}
			\sup_{t\in [0, \tau]}\|w^{1}(t,\cdot) \|_{W^{1,\infty}} \leq C_1.
		\end{equation}
		Thus, Equation \eqref{condition1aw} holds.\medskip\\	
		Let us now check that $ w^{1} $ satisfies  \eqref{condition1bw}. Let $ \chi[u]:=u e^u $, we remark  that $ |e^u-1|\leq ue^u =\chi [u] $ for all $ u\geq 0 $. We have
		\begin{align}
			|w^{1}(t,x)-u_0(x)| &\leq |u_0(x)| \left| \exp\left\{\int_0^t F(w^{}(l,x))+P^{}(l,\Pi^{}(l,0,x))\dd l \right\}-1 \right|\notag\\
			&\leq \|u_0 \|_{C^0} \chi \left[\int_0^t \|F(w^{}(l,\cdot)) \|_{C^0}+\|P^{}(l,\Pi^{}(l,0,\cdot))\|_{C^0}\dd l \right]\notag\\
			&\leq  \|u_0 \|_{C^0} \chi \left[\tau \tilde{F}+\tau\sup_{l\in [0,\tau]}\|P^{}(l,\cdot)\|_{C^0} \right]\notag\\
			&\leq  \|u_0 \|_{C^0} \chi \left[\tau \tilde{F}+\tau\kappa \right],\label{eq:w1a}
		\end{align}
		where $ \tilde{F} = \sup_{u\in [0,\kappa]} \left\{|F(u)|+|F'(u)| \right\} $. 
		From \eqref{eq:w1a} we have
		\begin{equation}\label{eq:w1aa}
			\sup_{t\in [0,\tau]}\|w^{1}(t,\cdot)-u_0(\cdot)\|_{C^0} \leq \|u_0 \|_{C^0} \chi \left[\tau \tilde{F}+\tau\kappa \right].
		\end{equation}		
		Since $ \lim_{u\to 0}\chi[u]=0 $,  it suffice to take $ \tau $ small enough to ensure \eqref{condition1bw}.\medskip\\
		
		\noindent\textbf{Step 2.}
		Next we verify \eqref{condition1aP} and \eqref{condition1bP} for $P^1$ where $P^1$ is defined as the second component of \eqref{eq:fixpoint}. We show that there exists $ \tau  $ small enough such that for any $ (w^{},P^{})\in \tilde{X}^\tau\times \tilde{Y}^\tau $
		\[ \sup_{t\in [0,\tau]}\|P^{1} (t,\cdot)\|_{W^{2,\infty}} \leq C_2. \]
		Thanks to the Schauder estimate \cite[Theorem 6.30]{Gilbarg2001}, there exists a constant $C$ depending only on $ \Omega$ such that 
		\begin{equation*}
			\|P^{1}(t,\cdot)\|_{ C^{2,\frac{1}{2}}}\leq C\| w^{1}(t,\Pi^{}(0,t;\cdot))\|_{C^{0,\frac{1}{2}} }.
		\end{equation*}
		
		Recalling  $ \sup_{t\in[0,\tau]}\|\Pi^{}(0,t;\cdot)\|_{W^{1,\infty}}\leq 3 e^{\tau C_2} $ as a consequence of \eqref{eq:boundforPi}, we have
		\begin{equation*}
			\begin{aligned}
				\|P^{1} (t,\cdot)\|_{W^{2,\infty}} &\leq \|P^{1}(t,\cdot)\|_{C^{2,\frac{1}{2}}} \\
				&\leq C\| w^{1}(t,\Pi^{}(0,t;\cdot))\|_{C^{0,\frac{1}{2}}}\\
				&\leq C\| w^{1}(t,\Pi^{}(0,t;\cdot))\|_{W^{1,\infty}}\\
				&\leq C\| w^{1}(t,\cdot)\|_{W^{1,\infty}}  \max\{\|\Pi^{}(0,t;\cdot)\|_{W^{1,\infty}},1\}\\
				&\leq 3\,C\,C_1e^{\tau C_2}.
			\end{aligned}
		\end{equation*}
		We can now define 
		\begin{equation}\label{eq:def_C2}
		C_2=6\,C\,C_1,
		\end{equation} 
		which only depends on $ \Omega $ and $ \| u_0\|_{W^{1,\infty}} $. Finally, we let $ \tau \leq (\ln2) / C_2 $ and we have
		\[ \|P^{1} (t,\cdot)\|_{W^{2,\infty}} \leq 6\,C\,C_1 = C_2 .\]
		In particular, we have shown \eqref{condition1aP}.\medskip\\
		Next we prove \eqref{condition1bP}. Since $\Omega$ is a two-dimensional unit disk, using Morrey's inequality \cite[Chapter 5. Theorem 6]{Evans1998}, we have
		\begin{equation*}
			\quad \|P^{1}(t,\cdot)-P_0(\cdot) \|_{C^{1,\frac{1}{2}}} 
			\leq C\|P^{1}(t,\cdot)-P_0(\cdot) \|_{W^{2,4}},
		\end{equation*}
		where $C$ is a constant depending only on $\Omega$. For the sake of simplicity, we use the same notation $C$ for a universal constant depending only on $\Omega$ in the following estimates.
		Moreover, by  the classical elliptic estimates we have
		\[ \|P^{1}(t,\cdot)-P_0(\cdot) \|_{W^{2,4}} \leq C \|w^{1}(t,\Pi^{}(0,t;\cdot))-u_0(\cdot) \|_{L^{4}}. \]
		This implies that
		\begin{align*}\label{eq:P2a}
			\|P^{1}(t,\cdot)-P_0(\cdot) \|_{C^1}&\leq C \|w^{1}(t,\Pi^{}(0,t;\cdot))-u_0(\cdot) \|_{L^{4}}\\
			&\leq C \|w^{1}(t,\Pi^{}(0,t;\cdot))-u_0(\cdot) \|_{C^{0}}\\
			&\leq  C \|w^{1}(t,\Pi^{}(0,t;\cdot))-w^{1}(t,\cdot) \|_{C^{0}} + C \|w^{1}(t,\cdot)-u_0(\cdot) \|_{C^{0}}\\
			&\leq 	C \| w^{1}\|_{W^{1,\infty}} \| \Pi^{}(0,t;\cdot) -\cdot\|_{C^0} +C  \|w^{1}(t,\cdot)-u_0(\cdot) \|_{C^{0}}\\
			&\leq 	C\; C_1 \| \Pi^{}(0,t;\cdot) -\cdot\|_{C^0} +C  \|w^{1}(t,\cdot)-u_0(\cdot) \|_{C^{0}}\\
			&\leq 	C\; C_1\; \tau\;\sup_{t\in[0,\tau]}\|\nabla P^{}(t,\cdot)\|_{C^0} +C  \|w^{1}(t,\cdot)-u_0(\cdot) \|_{C^{0}}\\
			&\leq 	C\; C_1\; \tau\; \kappa +C  \|w^{1}(t,\cdot)-u_0(\cdot) \|_{C^{0}}\\
			&\leq  C\; C_1\; \tau\; \kappa +C  \|u_0 \|_{C^0} \chi \left[\tau \tilde{F}+\tau\kappa \right],
		\end{align*}
		where we have used \eqref{eq:w1aa} for the last inequality . We can conclude
		\[ \sup_{t\in[0,\tau]}\|P^{1}(t,\cdot)-P_0(\cdot) \|_{C^1} \to 0, \quad \tau \to 0. \]
		Thus, it suffice to take $ \tau $ small enough to ensure the neighborhood condition \eqref{condition1bP}.\medskip\\
		
		\noindent\textbf{Step 3. Contraction mapping}
		In order to verify \eqref{condition2}, we let $ \begin{pmatrix}
		\tilde{w}^{}\\
		\tilde{P}^{}
		\end{pmatrix},\begin{pmatrix}
		w^{}\\
		P^{}
		\end{pmatrix} \in \overline{B_{\tilde Z^\tau}}\left(\begin{pmatrix}
		w^0\\
		P^0
		\end{pmatrix},r\right) $. We observe that
		\begin{align*}
			\left|\tilde{w}^{1}(t,x)-w^{1}(t,x)\right|&= \bigg|u_0(x)\exp \Big(\int_{0}^{t}F(w^{}(l,x))+P^{}(l,\Pi^{}(l,0;x))\dd l\Big)\\
			&\quad-u_0(x)\exp \Big(\int_{0}^{t}F(\tilde w^{}(l,x))+\tilde P^{}(l,\tilde \Pi^{}(l,0;x))\dd l\Big)\bigg| 
		\end{align*}
		Due to the classical inequality $|e^x-e^y|\leq e^{x+y}|x-y|$  which holds for any $x,y\in \R$, we deduce
		\begin{align}
			\big|\tilde{w}^{1}(t,x)&-w^{1}(t,x)\big| \notag\\
			&\leq \|u_0 \|_{C^0} e^{2\tau (\tilde{F}+\kappa)} \bigg[ \int_0^t\| F(\tilde{w}^{}(l,\cdot)) -F(w^{}(l,\cdot))\|_{C^0} \dd l\notag\\
			&\quad + \int_0^t \|\tilde{P}^{}(l,\tilde{\Pi}^{}(l,0;\cdot)) -P^{}(l,\Pi^{}(l,0;\cdot))\|_{C^0} \dd l\bigg]\notag\\
			&\leq \|u_0 \|_{C^0} e^{2\tau (\tilde{F}+\kappa)} \bigg[ \tau \tilde{F}\sup_{l\in[0,\tau]} \| \tilde{w}^{}(l,\cdot) -w^{}(l,\cdot) \|_{C^0}\notag \\
			&\quad + \tau \sup_{l\in[0,\tau]}\| \tilde{P}^{}(l,\tilde{\Pi}^{}(l,0;\cdot)) -P^{}(l,\tilde{\Pi}^{}(l,0;\cdot)) \|_{C^0}\notag\\
			&\quad+ \tau \sup_{l\in[0,\tau]}\| P^{}(l,\tilde{\Pi}^{}(l,0;\cdot))  -P^{}(l,\Pi^{}(l,0;\cdot)) \|_{C^0}\bigg]\notag\\
			&\leq \|u_0 \|_{C^0} e^{2\tau (\tilde{F}+\kappa)} \Big[ \tau \tilde{F} \sup_{l\in[0,\tau]}\| \tilde{w}^{}(l,\cdot) -w^{}(l,\cdot) \|_{C^0} + \tau \sup_{l\in[0,\tau]}\| \tilde{P}^{}(l,\cdot) -P^{}(l,\cdot) \|_{C^0}\notag\\
			&\quad+ \tau \sup_{l\in[0,\tau]}\| P^{}(l,\cdot)\|_{W^{1,\infty}} \sup_{l\in[0,\tau]}\| \tilde{\Pi}^{}(l,0;\cdot)  -\Pi^{}(l,0;\cdot) \|_{C^0}\Big]\notag\\
			&\leq \tau\|u_0 \|_{C^0} e^{2\tau (\tilde{F}+\kappa)} \Big[  \tilde{F} \| \tilde{w}^{} -w^{} \|_{X^\tau} + \| \tilde{P}^{} -P^{} \|_{Y^\tau}\notag\\
			&\quad + \;C_2\;\sup_{l\in[0,\tau]}\| \tilde{\Pi}^{}(l,0;\cdot)  -\Pi^{}(l,0;\cdot) \|_{C^0}\Big]\label{eq:w2}
		\end{align}
		To estimate $\sup_{l\in[0,\tau]}\| \tilde{\Pi}^{}(l,0;\cdot)  -\Pi^{}(l,0;\cdot) \|_{C^0}$ in \eqref{eq:w2}, we claim that 
		\begin{equation}\label{eq:Pidiff}
			\sup_{t,s\in[0,\tau]}\| \tilde{\Pi}^{}(t,s;\cdot)  -\Pi^{}(t,s;\cdot) \|_{C^0} \leq  \tau e^{\tau C_2} \sup_{t\in[0,\tau]} \| \tilde{P}^{}(l,\cdot) -P^{}(l,\cdot) \|_{C^1} 
		\end{equation}
		Indeed, we can obtain that
		\begin{align*}
			\left|  \tilde{\Pi}^{}(t,s;x)  -\Pi^{}(t,s;x) \right| &= \left|\int_s^t \nabla\tilde{P}^{}(l,\tilde{\Pi}^{}(l,s;x)) -\nabla P^{}(l,\Pi^{}(l,s;x))\dd l  \right|\\
			&\leq \int_s^t \|\nabla\tilde{P}^{}(l,\tilde{\Pi}^{}(l,s;\cdot)) -\nabla P^{}(l,\tilde{\Pi}^{}(l,s;\cdot)) \|_{C^0} \dd l\\
			&\quad +  \int_s^t \|\nabla P^{}(l,\tilde{\Pi}^{}(l,s;\cdot)) -\nabla P^{}(l,\Pi^{}(l,s;\cdot)) \|_{C^0} \dd l\\
			&\leq \tau \sup_{l\in[0,\tau]}\|\nabla\tilde{P}^{}(l,\tilde{\Pi}^{}(l,s;\cdot)) -\nabla P^{}(l,\tilde{\Pi}^{}(l,s;\cdot)) \|_{C^0}\\
			& \quad+\sup_{l\in[0,\tau]}\|\nabla P^{}(l,\cdot) \|_{W^{1,\infty}}\int_s^t \|\tilde{\Pi}^{}(l,s;\cdot) -\Pi^{}(l,s;\cdot) \|_{C^0} \dd l.
		\end{align*}
		This leads to 
		\begin{align*}
			\sup_{t,s\in[0,\tau]}\| \tilde{\Pi}^{}(t,s;\cdot)  -\Pi^{}(t,s;\cdot) \|_{C^0} &\leq  \tau \sup_{l\in[0,\tau]}\|\tilde{P}^{}(l,\cdot) - P^{}(l,\cdot) \|_{C^1}\\
			&\quad+C_2\int_s^t \|\tilde{\Pi}^{}(l,s;\cdot) -\Pi^{}(l,s;\cdot) \|_{C^0} \dd l 
		\end{align*}
		Again due to Gr\"onwall's inequality, we  conclude that  \eqref{eq:Pidiff} holds.
		
		Inserting \eqref{eq:Pidiff} into \eqref{eq:w2} we have
		\begin{align}
			\sup_{t\in[0,\tau]}&\left\|\tilde{w}^{1}(t,\cdot)-w^{1}(t,\cdot)\right\|_{C^0}\notag\\
			&\leq \|u_0 \|_{C^0} e^{2\tau (\tilde{F}+\kappa)} \Big[ \tau \tilde{F} \| \tilde{w}^{} -w^{} \|_{X^\tau} + \tau\| \tilde{P}^{} -P^{} \|_{Y^\tau}+ \tau^2 \;C_2\; e^{\tau C_2}\| \tilde{P}^{} -P^{} \|_{Y^\tau}  \Big]\notag\\
			&\leq \tau \|u_0 \|_{C^0} e^{2\tau (\tilde{F}+\kappa)} \Big[  \tilde{F} \| \tilde{w}^{} -w^{} \|_{X^\tau} +\left(1+\tau \;C_2\; e^{\tau C_2}\right) \| \tilde{P}^{} -P^{} \|_{Y^\tau} \Big]\notag\\
			&\leq L_1(\tau)  \Big[  \| \tilde{w}^{} -w^{} \|_{X^\tau} +\| \tilde{P}^{} -P^{} \|_{Y^\tau} \Big]\label{eq:contraction_w}
		\end{align}
		where we set 
		\[ L_1(\tau):= \tau \|u_0 \|_{C^0} e^{2\tau (\tilde{F}+\kappa)} \left( \tilde{F} +\left(1+\tau \;C_2\; e^{\tau C_2}\right)\right)   \]
		and $ L_1(\tau)\to 0 $ as $ \tau \to 0 $. 
		
		Next we prove the contraction  property for $ \|\tilde{P}^{1}-P^{1}\|_{Y^\tau}  $. As before, applying the same argument of Morrey's inequality and the  classical elliptic estimates, we can deduce
		\begin{equation*}
			\begin{aligned}
				\|\tilde{P}^{1}(t,\cdot)-P^{1}(t,\cdot) \|_{C^1}
				&\leq C \|\tilde{w}^{1}(t,\tilde{\Pi}^{}(0,t;\cdot))-w^{1}(t,\Pi^{}(0,t;\cdot)) \|_{L^{4}}\\
				&\leq C \|\tilde{w}^{1}(t,\tilde{\Pi}^{}(0,t;\cdot))-w^{1}(t,\Pi^{}(0,t;\cdot)) \|_{C^{0}}\\
				&\leq   C \|\tilde{w}^{1}(t,\tilde{\Pi}^{}(0,t;\cdot))-w^{1}(t,\tilde{\Pi}^{}(0,t;\cdot)) \|_{C^{0}}\\
				&\quad +C \|w^{1}(t,\tilde{\Pi}^{}(0,t;\cdot))-w^{1}(t,\Pi^{}(0,t;\cdot))\|_{C^{0}} \\
				&\leq C \|\tilde{w}^{1}(t,\cdot)- w^{1}(t,\cdot)\|_{C^{0}}+C \| w^{1}\|_{W^{1,\infty}} \|\tilde{\Pi}^{}(0,t;\cdot) - \Pi^{}(0,t;\cdot)\|_{C^0}\\
				&\leq C \|\tilde{w}^{1}(t,\cdot)- w^{1}(t,\cdot)\|_{C^{0}}+	C\; C_1 \|\tilde{\Pi}^{}(0,t;\cdot) - \Pi^{}(0,t;\cdot)\|_{C^0}\\
				&\leq C  \|\tilde{w}^{1}(t,\cdot)- w^{1}(t,\cdot)\|_{C^{0}}+C\; C_1\; \tau\;e^{\tau C_2}\sup_{t\in[0,\tau]}\|\tilde{P}^{}(t,\cdot)-P^{}(t,\cdot)\|_{C^1} ,
			\end{aligned}
		\end{equation*}
		where we used \eqref{eq:Pidiff} in the last inequality and $C$ is a constant depending only on $\Omega$.
		Defining  $ L_2(\tau) := 	C\; C_1\; \tau\;e^{\tau C_2}$ and together with \eqref{eq:contraction_w} we obtain
		\begin{equation}\label{eq:contraction_P}
			\sup_{t\in[0,\tau]}\|\tilde{P}^{1}(t,\cdot)-P^{1}(t,\cdot) \|_{C^1} \leq C\,L_1(\tau)\,\Big[  \| \tilde{w}^{} -w^{} \|_{X^\tau} +\| \tilde{P}^{} -P^{} \|_{Y^\tau} \Big] + L_2(\tau) \| \tilde{P}^{} -P^{} \|_{Y^\tau} 
		\end{equation}
		Combing with \eqref{eq:contraction_w} and \eqref{eq:contraction_P} we deduce 
		\begin{equation}\label{eq:contraction}
			\|\tilde{w}^{1}-w^{1} \|_{X^\tau}+\|\tilde{P}^{1}-P^{1} \|_{Y^\tau}\leq \big(C\,L_1(\tau) +L_2(\tau)\big)\Big[  \| \tilde{w}^{} -w^{} \|_{X^\tau} +\| \tilde{P}^{} -P^{} \|_{Y^\tau} \Big],
		\end{equation}
		where $ L_i(\tau) \to 0,\;i=1,2 $ as $ \tau \to 0 $. If $ \tau $ is small enough, this implies \eqref{condition2} for some $\theta\in (0,1)$. 
		Since $ \tilde Z^\tau $ is complete metric space for the distance induced by the norm $\left(\|\cdot \|_{X^\tau},\|\cdot \|_{Y^\tau}\right)$ in $ Z_{\tau} $, the result follows by the classical Banach fixed point theorem.
	\end{proof}
	
	\subsection{Parameter fitting}\label{App:para}
	From the work in \cite{Pasquier2011}, MCF-7 and MCF-7/Doxo cells are cultured at $ 10^5 $ initial cell number separately in $ 60\times 15 $ mm cell dish with or without doxorubicine. We use the cell  proliferation data followed every 12 hours during six days to fit the parameters of the following ordinary differential equation
	\begin{equation}\label{eq:ode}
		\begin{cases}
			\dfrac{du_i}{dt} =u_i(b_i -a_{ii} u_i)-\delta_i u_i\quad i=1,2.\\
			u_i(0)= u_{i,0}.
		\end{cases}
	\end{equation}
	Here we use $ u_1 $ to represent the MCF-7 (sensitive to drug) and $ u_2 $ to  represent the MCF-7/Doxo  (resistant to drug) and 
	$b_i>0$ is the growth rate $ \delta_i $ is the extra mortality rate caused by drug (doxorubicine) treatment and $a_{ii}>0$ is a coefficient which controls the number of saturation.

	In the work \cite{Sutherland1983}  cell proliferation kinetics for MCF-7 is studied over 11 days in 150 $ cm^2 $ flask. Following an inoculation of $ 3 \times 10^5 $ cells at day $ 0 $,  a maximum cell density of $ 8 $ to $ 9 \times 10^7 $ cells/flask was reached at day $ 11 $. Therefore, we assume the saturation number for each species in $ 60 \times 15 $ mm (surface of $ 21.5\, cm^2$) dish satisfies  
	\[\frac{b_i}{a_{ii}}\approx 9\times10^7 \times \frac{21.5\, cm^2}{150\, cm^2}= 1.29 \times 10^7,\quad i=1,2.  \]
	
	By fixing the saturation number, we first estimate the growth rate $ b_i $ of each species under zero drug concentration, namely $ \delta_i=0 $. We divide the cell number by $ u_{i,0}=10^5 $ (the initial cell number) and rescale the parameters as follows  
	\begin{equation}\label{eq:rescalling}
		\tilde{u}_i=\frac{u_i}{10^5},\quad \tilde{a}_i=a_{ii}\times10^5,\quad \tilde{b}_i=b_i.
	\end{equation}
	
	As seen in Figure \ref{FIG_fitting1}, without treatment, MCF-7 and MCF-7/Doxo displayed very similar growth rates, 0.6420 and 0.6359 per day, respectively.
	\begin{figure}[H]
		\begin{center}
\includegraphics[width=0.5\textwidth]{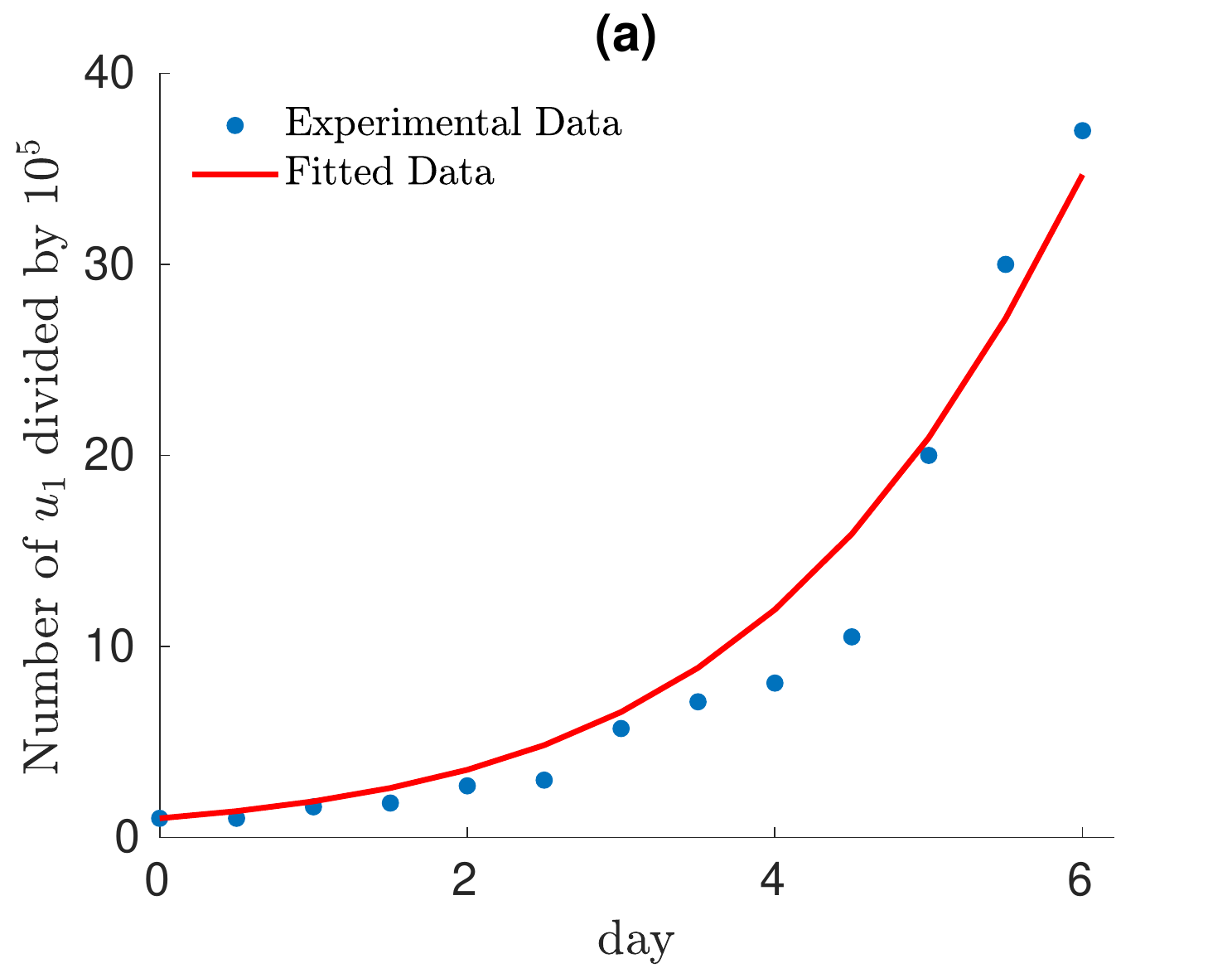}\includegraphics[width=0.5\textwidth]{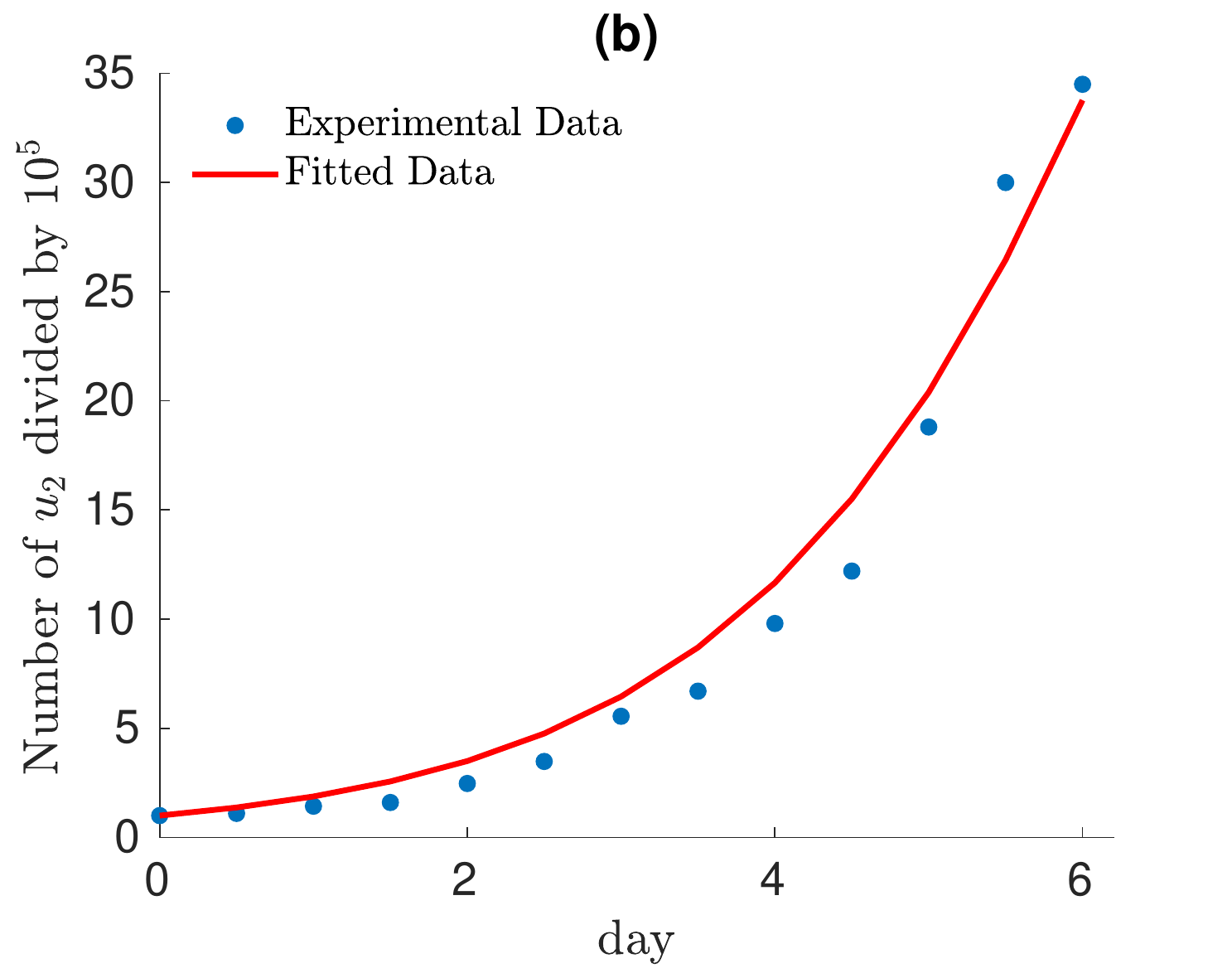}
		\end{center}
		\caption{\textit{Fitting for the parameters {\rm(}under rescaling \eqref{eq:rescalling}{\rm)} in model \eqref{eq:ode}. We plot the experimental data (dots in (a)) of MCF-7 (sensitive to drug) and (dots in (b)) MCF-7/Doxo (resistant to drug) with no drug concentration over 6 days. We obtain an estimation of the growth rates $ b_1= 0.6420, b_2 =0.6359$ and $ a_{11}=0.0050,a_{22}=0.0049 $.}}
		\label{FIG_fitting1}
	\end{figure}
	By fixing the parameters 
	\begin{equation}\label{eq:para_growth}
		b_1=0.6420,\,  a_{11}=0.0050, \quad b_2=0.6359,\, a_{22}=0.0049,
	\end{equation}
	we consider different scenarios with the drug concentration varies from $ 0.1\, \mu M $ to $ 10\,\mu M $ (see Figure \ref{FIG_fitting2}) and we estimate the extra mortality rate $ \delta_i $ for each population due to doxorubicine (see Table \ref{TABLE3}).  
	\begin{figure}[H]
		\begin{center}
			\includegraphics[width=0.5\textwidth]{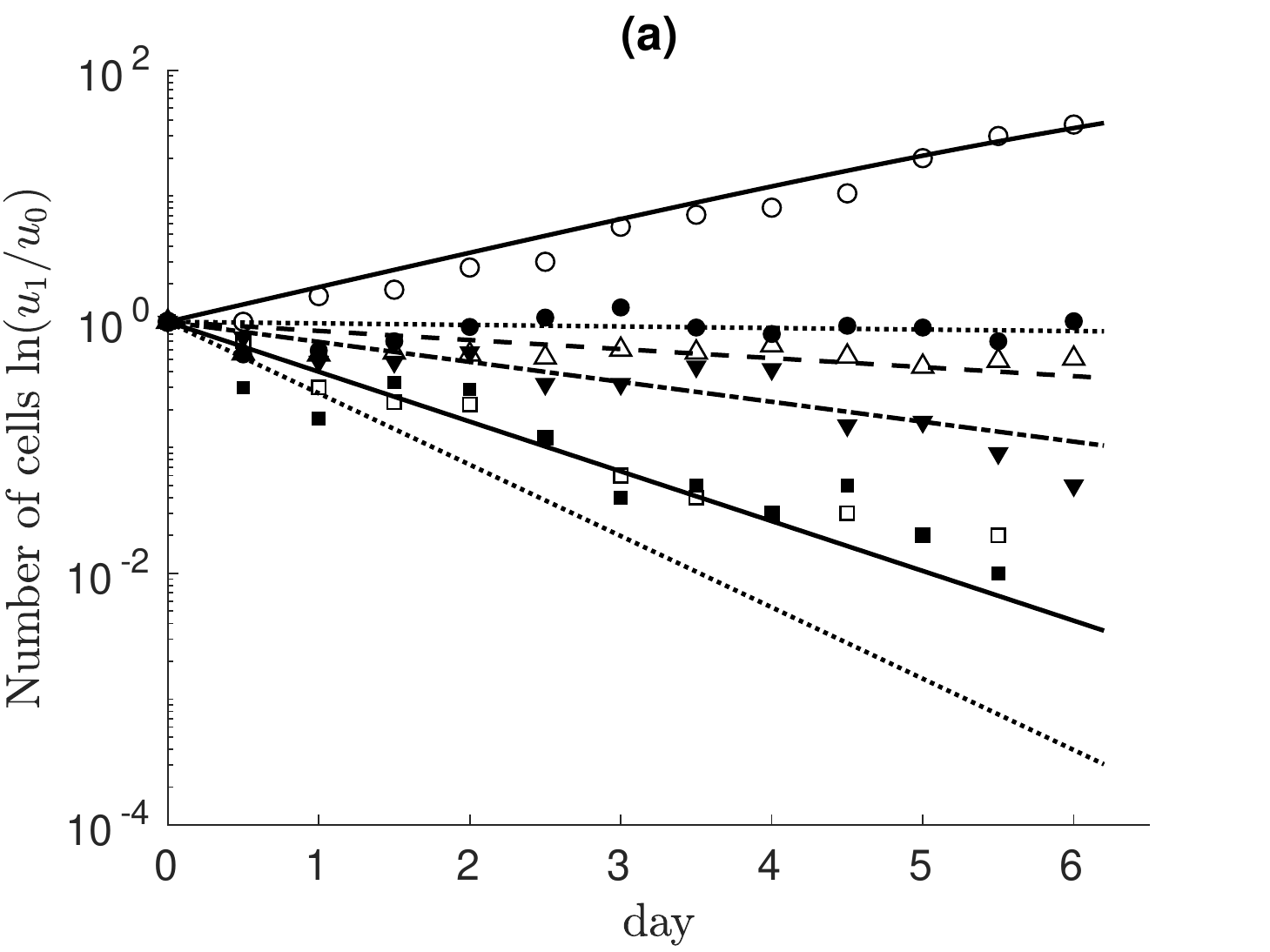}\includegraphics[width=0.5\textwidth]{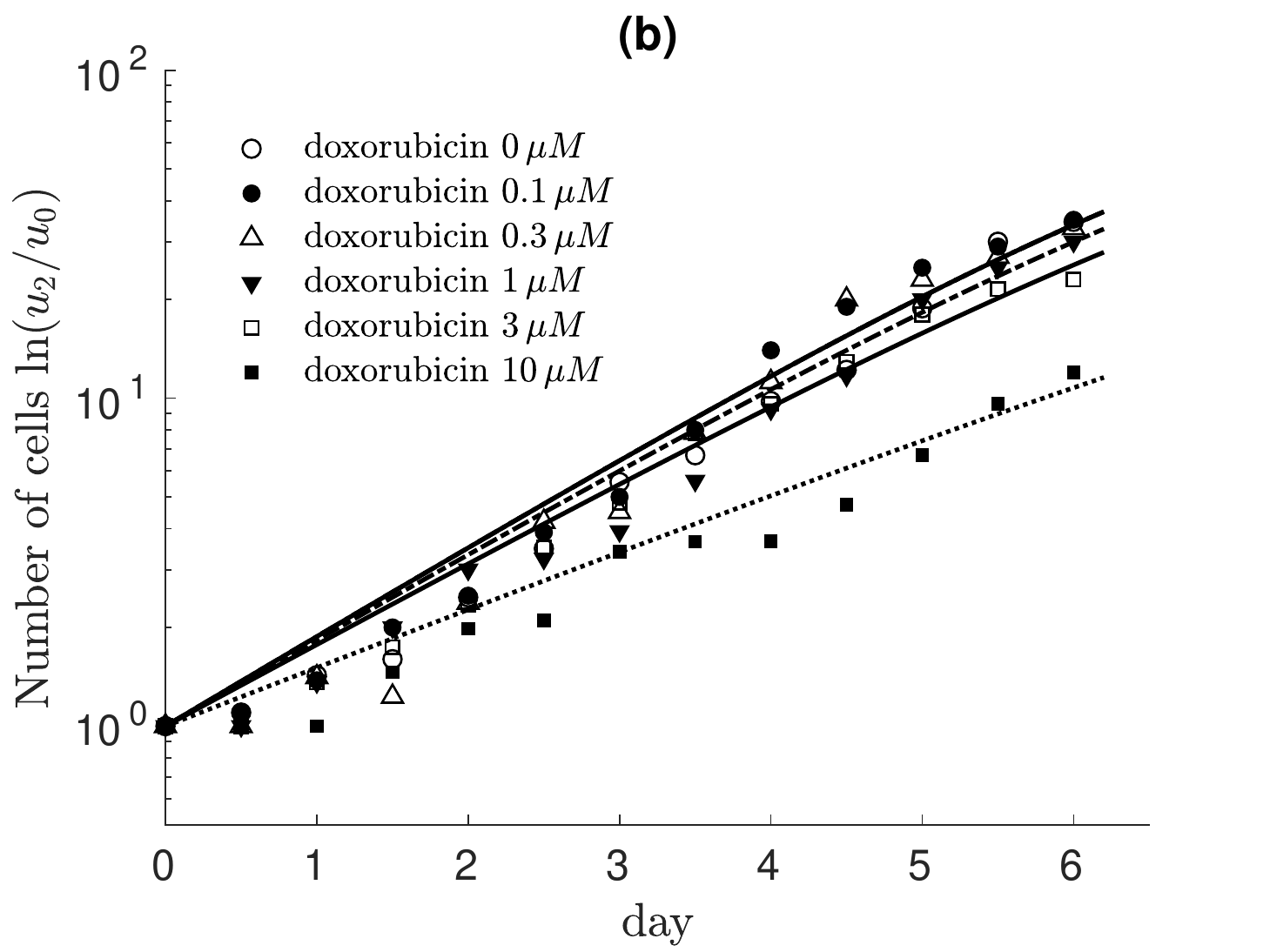}
		\end{center}
		\caption{\textit{{Fitting for the growth curves of MCF-7 (a) and MCF-7/Doxo (b) under different drug concentrations in model \eqref{eq:ode} over 6 days. Cells were grown in the absence or presence of doxorubicine (0.1 to 10 $ \mu M $, corresponding symbols given in the legend in (b)) and counted every 12 hours in a Malassez chamber. Cell counts are expressed as the logarithm of the cell numbers ($ u_i $) divided by the cell number at day 0 ($ u_{i,0} $). We fix the growth rate $ b_i $ and $ a_{ii},\,i=1,2 $ as in \eqref{eq:para_growth}.}}}
		\label{FIG_fitting2}
	\end{figure}
	

	\begin{table}[H]\centering
		\begin{tabular}{ccccccc}
			\doubleRule
			\textbf{Drug concentration} ($\mu M$) & 0 & 0.1 & 0.3 & 1 & 3 & 10 \\ \hline
			\textbf{Extra mortality} $ \delta_{1} $ (day$^{-1} $)  & 0 & 0.6619   & 0.8109   & 1.0118 & 1.5585 & 1.9545  \\ 
			\textbf{Extra mortality} $ \delta_{2} $ (day$^{-1} $)  & 0 & 0   & 0   & 0.0246 & 0.0569 & 0.2192  \\
			\doubleRule
		\end{tabular}
		\caption{\textit{List of the estimation of extra mortality rate  $ \delta_1 $  for the sensitive cell and $ \delta_2 $ for the resistant cell under different concentrations of doxorubicine.}}\label{TABLE3}
	\end{table}
	
	\subsection{Numerical Scheme}\label{App:NumScheme}
	For simplicity, we give the numerical scheme for the following one species and one dimensional model 
	\begin{equation}\label{4.1}
		\begin{cases}
			\begin{aligned}
				&\partial_t u+d\,\partial_x\left(u\partial_x P  \right)=f(u) \\
				&(I-\chi \Delta)P(t,x)=u(t,x)
			\end{aligned}
			&\text{in }(0,T]\times[-L,L]\\
			\partial_x P(t,\pm L) =0 &\text{on } [0,T].
		\end{cases}
	\end{equation}
	The numerical method used is based on finite volume method. We refer to 	\cite{Leveque2002,Toro2013} for more results about this subject. 
	Our numerical scheme reads as follows
	\begin{equation}\label{4.2}
		\begin{aligned}
			&u^{n+1}_i=u^n_i -d\, \frac{\Delta t}{\Delta x}\bigg(\phi(u_{i+1}^n,u_{i}^n)-\phi(u_{i}^n,u_{i-1}^n)\bigg)+\Delta t \, f(u_i^n),\\
			&i=1,2,...,M,\ n=0,1,2,...,N,
		\end{aligned}
	\end{equation}
	with the flux $ \phi(u_{i+1}^n,u_{i}^n) $ defined as
	\begin{equation}\label{4.3}
		\phi(u_{i+1}^n,u_{i}^n)=(v_{i+\frac{1}{2}}^n)^+u_{i}^n -(v_{i+\frac{1}{2}}^n)^-u_{i+1}^n=\begin{cases}
			v_{i+\frac{1}{2}}^n u_{i}^n, & v_{i+\frac{1}{2}}^n\geq 0,\\
			v_{i+\frac{1}{2}}^n u_{i+1}^n, &v_{i+\frac{1}{2}}^n<0.
		\end{cases} 
	\end{equation}
	and
	\begin{equation}\label{4.3bis}
		v_{i+\frac{1}{2}}^n=-\dfrac{l_{i+1}^n-l_i^n}{\Delta x},\ i=0,1,2,\cdots,M,
	\end{equation}
	where we define
	\begin{equation*}
		L^n:=(I-\chi A)^{-1}U^n,\quad \ n=0,1,2,...,N,\quad L_i^n = \big(l_i^n\big)_{M\times 1}\quad
		U^n = \big(u_i^n\big)_{M\times 1}.
	\end{equation*}	
	where $ \chi $ is a constant and $ A=(a_{i,j})_{M\times M} $ is the usual linear diffusion matrix with Neumann boundary condition. Therefore, since the Neumann boundary condition corresponds to a no flux boundary condition, we impose
	\begin{equation}\label{4.4}
		\begin{aligned}
			\phi(u_{1}^n,u_{0}^n)=0,\\
			\phi(u_{M+1}^n,u_{M}^n)=0.
		\end{aligned}
	\end{equation}
	which corresponds to $ l_0=l_1 $ and $ l_{M+1}=l_M $. 
	
	The numerical scheme at the boundary becomes 
	\begin{align*}
		u^{n+1}_1&=u^n_1 -d\, \frac{\Delta t}{\Delta x}\phi(u_{2}^n,u_{1}^n)+\Delta t \, f(u_1^n),\\
		u^{n+1}_M&=u^n_M+d\, \frac{\Delta t}{\Delta x}\phi(u_{M}^n,u_{M-1}^n)+\Delta t \, f(u_M^n).
	\end{align*}
	By this boundary condition, we have the conservation of mass for Equation \eqref{4.1} when the reaction term $ f\equiv 0 $.

\end{document}